\newfont{\sdbl}{msbm9}
\newfont{\dbl}{msbm10 at 12pt}
\theoremstyle{definition}
\newcommand{\da}{{\mbox{\dbl A}}}
\newcommand{\dpp}{{\mbox{\dbl P}}}
\newcommand{\dz}{{\mbox{\dbl Z}}}
\newcommand{\dn}{{\mbox{\dbl N}}}
\newcommand{\sdz}{{\mbox{\sdbl Z}}}
\newcommand{\sdn}{{\mbox{\sdbl N}}}
\newcommand{\dc}{{\mbox{\dbl C}}}
\newcommand{\dq}{{\mbox{\dbl Q}}}
\newcommand{\ord}{\mathop{\rm ord}\nolimits}
\newcommand{\gr}{\mathop {\rm gr}}
\newcommand{\Der}{\mathop {\rm Der}}
\newcommand{\End}{\mathop {\rm End}}
\newcommand{\Adm}{\mathop {\rm Adm}}
\newcommand{\Ker}{\mathop {\rm Ker}}
\newcommand{\HT}{\mathop {\rm HT}}
\newcommand{\LT}{\mathop {\rm LT}}
\newcommand{\Ch}{\mathop {\rm Ch}}
\newcommand{\Sup}{\mathop {\rm Supp}}
\newcommand{\Quot}{\mathop {\rm Quot}}
\newcommand{\trdeg}{\mathop {\rm trdeg}}
\newcommand{\Spec}{\mathop {\rm Spec}}
\newcommand{\Proj}{\mathop {\rm Proj}}
\newtheorem{defin}{Definition}[section]
\newtheorem{nt}{Remark}[section]
\newtheorem{ex}{Example}[section]
\theoremstyle{plain}
\newtheorem{prop}{Proposition}[section]
\newtheorem{theo}{Theorem}[section]
\newtheorem{lemma}{Lemma}[section]
\newtheorem{corol}{Corollary}[section]
\newcommand{\eqdef}{\stackrel{\rm def}{=}}
\newcommand{\Ord}{\mathop {\rm \bf ord}}
\newcommand{\Proof}{{\noindent \bf Proof}}
\newcommand{\co}{{{\cal O}}}
\newcommand{\cf}{{{\cal F}}}
\newcommand{\cq}{{{\cal Q}}}
\newcommand{\cs}{{{\cal S}}}
\newcommand{\cm}{{{\cal M}}}
\newcommand{\ci}{{{\cal I}}}
\newcommand\limproj{\mathop{\underleftarrow{\lim}}}
\newcommand\limind{\mathop{\underrightarrow{\lim}}}
\title{On rings of commuting partial differential operators. \footnote{the  author is supported by RFBR grant no. 11-01-00145-a, by grant SSh ¹ 1410.2012.1, 
by grant of National Scientific Projects no. 14.740.11.0794, and by the Government of the Russian Federation for support of research projects implemented by leading scientists at Lomonosov Moscow State University under the agreement  No. 11.G34.31.0054.
}}
\author{A.B.Zheglov}
\date{}
\begin{document}

\maketitle

\begin{abstract} We give a natural generalization of the classification of commutative rings of ordinary differential operators, given in works of Krichever, Mumford, Mulase, and determine commutative rings of operators in a completed ring of partial differential operators in two variables (satisfying certain mild conditions)  in terms of Parshin's generalized geometric data. It uses a generalization of M.Sato's theory and is constructible in both ways.





\end{abstract}

\section{Introduction}

The problem of classification of commutative rings of ordinary differential operators was inspired already by works of Wallenberg  \cite{W} and Schur \cite{S}, and then has been studied by many authors and in diverse context of motivations, including Burchnall-Chaundy \cite{BC}, Gelfand-Dikii \cite{GD}, Krichever \cite{Kr}, Drinfeld \cite{Dr}, Mumford \cite{Mum1}, Segal-Wilson \cite{SW}, Verdier \cite{V} and Mulase \cite{Mu}.

Recall that the commutative algebras of ordinary differential operators correspond to spectral data. Thus, if we have a ring of commuting operators generated over a ground field $k$ by two ordinary differential operators 
$$
P_1=\partial_x^n+u_{n-1}(x)\partial_x^{n-1}+\ldots +u_0(x), \mbox{\quad} 
P_2=\partial_x^m+v_{m-1}(x)\partial_x^{m-1}+\ldots +v_0(x), 
$$
then, as it was found already by Burchnall-Chaundy \cite{BC}, there is a non zero polynomial $Q(\lambda ,\mu )$ such that $Q(P_1,P_2)=0$. A completion $C$ of the curve $Q(\lambda ,\mu )=0$ is called a {\it spectral curve}. At a generic point $(\lambda ,\mu )$ the space of eigenfunction $\psi$ (Baker-Akhieser functions): 
$$
P_1\psi =\lambda \psi , \mbox{\quad} P_2\psi =\mu \psi 
$$ 
has dimension $r$, and these functions are sections of a torsion free sheaf $\cf$ of rank $r$ on the spectral curve (for more precise statements and details see works cited above). The completion of the curve $Q(\lambda ,\mu )=0$ is obtained by adding a smooth point $P$ (this is not necessary the projective closure in $\dpp^2$!), and the triple $(C,P,\cf )$ is a part of the so called {\it spectral data}. 

Generalizing this result of Burchnall and Chaundy, Krichever (\cite{Kr}, \cite{Kr1}) gave a geometric classification of rank $r$ algebras of "generic position" in terms of spectral data. Drinfeld \cite{Dr} gave an algebro-geometric reformulation of Krichver's results which was improved later by Mumford \cite{Mum1}. Later Verdier and Mulase gave a classification of all rank $r$ algebras. Mulase's classification was a natural improvement of the theorems of Krichever and Mumford, Verdier used other ideas and proposed a classification in terms of parabolic structures and connections of vector bundles defined on curves. It is important to notice that the constructions of Krichever, Mumford and Mulase are essentially constructible in both directions, i.e. for a given ring of commuting operators one can construct a geometric data, and vice versa. This leads to a possibility to use this method for constructing examples of commuting operators. 

After their work, many attempts have been made to classify algebras of commuting partial differential operators in several variables. There are several approaches to this problem (see e.g. review \cite{Pr} and references therein). One of the methods is based on the approach of Nakayashiki (see \cite{Na}, \cite{Mi}, \cite{Roth} and references therein) and the other method uses ideas from differential algebra (see \cite{Pr} and references therein). Nevertheless, the methods above don't lead to a classification, and Nakayashiki's approach leads to rings of commuting  partial differential operators with matrix (not of dimension 1) coefficients. 

The classification of ordinary differential operators can be considered as a part of the KP theory that relates several mathematical objects: solutions of the KP equation (or of the KP hierarchy), geometrical (spectral) data, rings of ordinary differential operators, points of the Sato grassmanian. In works  \cite{Pa}, \cite{Pa1}, \cite{Os}, \cite{Ku},\cite{Ku1}, \cite{Zhe} several pieces of analogous KP theory in dimension two are developed: there are analogues of the KP hierarchy, geometrical data, Jacobians.  

The solution of the classification problem of commutative rings of operators we are proposing in this paper uses our original approach based on some ideas of Parshin (see \cite{Pa}, \cite{Pa1}) and works cited above, and is a natural generalization of the theorems of Krichever, Mumford and Mulase, and is constructible in both ways. On the other hand, it generalizes the approach of M.Sato in dimension one. The methods used in this paper could be generalized also to higher dimension, and we plan to describe the general case in another paper. The reason to describe first carefully dimension two case is that this case is applicable to already investigated theory of ribbons (see \cite{Ku},\cite{Ku1}) and theory of generalized Parshin-KP's hierarchies (see \cite{Pa1}, \cite{Zhe}), which have been developed only for dimension 2 case.

As a result we obtain a classification of commutative subrings (satisfying certain mild conditions, see theorems \ref{schurpair} and \ref{dannye2}) in the ring of completed differential operators $\hat{D}$ (see subsection \ref{defin1}) that contain the ring of partial differential operators 
$k[[x_1,x_2]][\partial_{x_1},\partial_{x_2}]$, where $k$ is a field of characteristic zero, as a dense subring. The operators from the ring $\hat{D}$ contain all usual partial differential operators, and difference operators as well. They are also linear and act on the ring of germs of analytical functions. 

Such commutative subrings include as a particular case all commutative subrings of partial differential operators (satisfying the same mild conditions, see theorem \ref{dannye2}) because of the following result on "purity" (see proposition \ref{purity}): any commutative subring in $\hat{D}$ containing such a ring of partial differential operators is itself a ring of partial differential operators. Thus, we obtain in a sense also a classification of commutative subrings of partial differential operators, although  there is a problem of finding extra conditions on the classifying data describing rings of partial differential operators between rings of operators in $\hat{D}$, see remark \ref{zakl3}. 

We would like to emphasize that the ring $\hat{D}$ naturally appears in our approach of generalization of the KP theory to higher dimension (cf. remark \ref{posl}). In dimension one there is no need to introduce it. As in one-dimensional case, one can introduce a notion of formal Baker-Akhieser function (cf. \cite[Introduction]{ZhM}), which in case of rings of partial differential operators satisfying certain conditions is an analogue of the Baker-Akhieser function considered in \cite{Kr} (see remark \ref{BA}). The explicit formula for this Baker-Akhieser  function uses local parameters at the point $P$ of the geometrical data (see definition \ref{geomdata}). We emphasize that this data did not appear in earlier approaches.

\bigskip

The classification we are giving here is divided in three steps. First we reduce the problem to the case of rings satisfying certain special properties ($1$-quasi elliptic rings, see definition \ref{elliptic}). Then we classify a bigger class of $\alpha$-quasi elliptic rings: namely, all such rings in a completed ring of differential operators (see subsection \ref{defin1}, definition \ref{elliptic}). We classify them in terms of pairs of subspaces (generalized Schur pairs, see definitions \ref{sch}, \ref{schurdata}). This classification uses a generalization of M.Sato's theory (see \cite{Sa}, \cite{SN}), and is constructible in both ways. After that we classify generalized  Schur pairs in terms of generalized geometric data (see definition \ref{geomdata}). On the one hand side, the data is a natural generalization of the geometric data in one dimensional case, on the other hand, it is a slight modification of the geometric data of Parshin  \cite{Pa} and Osipov \cite{Os}. The exposition of the last two steps of our classification follows closely to the exposition of the corresponding results in the work of Mulase \cite{Mu}. In particular, as the last step of the classification we introduce two categories, the category of Schur pairs (definition \ref{schurcategory}) and the category of geometric data (definition \ref{geomcategory}), and show their anti-equivalence. These categories are natural generalizations of the corresponding categories from \cite{Mu}. 

\bigskip

The paper is organized as follows. In section 2 we recall some known facts about rings of partial differential operators, introduce new notation and develop a generalization of the M.Sato theory. In section 3 we realize three steps of the classification described above. In section 4 we announce some examples (omitting all calculations that will appear in \cite{Ku3}) and explain how known examples of commuting partial differential operators (such as operators corresponding to quantum Calogero-Moser system or rings of quasi invariants, see \cite{Ch}, \cite{FV}, \cite{EG}, \cite{BEG}, \cite{FV2}) fit into the proposed classification. At the end of this section we prove a theorem about algebraic-geometric properties of maximal commutative subrings of partial differential operators in two variables; in particular, we show that all such rings must be Cohen-Macaulay. 

Some applications of constructions described in this paper to the theory of ribbons (see \cite{Ku},\cite{Ku1}) and theory of generalized Parshin-KP's hierarchies (see \cite{Pa1}, \cite{Zhe}), as well as several explicit examples of commuting  operators, will appear in a separate paper (see \cite{Ku3}), part of which is a recent work \cite{Ku4} (cf. also work \cite{ZhM} for a comparison with Baker-Akhieser-modules-approach). 

\bigskip

{\bf Acknowledgements.} I am grateful to Herbert Kurke for his important and useful comments and exposition improvements made on the earlier version of this paper. I am also grateful to Denis Osipov for many stimulating discussions and useful suggestions. I would like to thank the MFO at Oberwolfach for the excellent working
conditions, where several improvements of this work has been done.

\section{Analogues of the Sato theory in dimension 2} 

\subsection{General setting}

\subsubsection{Generalities}

Let $R$ be a commutative $k$-algebra, where $k$ is a field of characteristic zero. 

Then we have the filtered ring $D(R)$ of $k$-linear differential operators and the $R$-module $\Der (R)$ of derivations:
$$
D_0(R)\subset D_1(R)\subset D_2(R)\subset \ldots ,\mbox{\quad } D_i(R)D_j(R)\subset D_{i+j}(R), \mbox{\quad } \Der (R)\subset D_1(R)
$$

$D_i(R)$ are defined inductively as sub-$R$-bimodules of $\End_{k}(R)$; by definition $D_0(R)=\End_{R}(R)=R$, 
$$
D_{i+1}(R)=\{P\in {\End}_k(R)| \mbox{\quad such that for all $f\in R$ $[P,f]\in \Der (R)$}\}.
$$

Then we can form the graded ring 
$$
gr (D(R))=\oplus_{i=0}^{\infty}D_i(R)/D_{i-1}(R)\mbox{\quad} (D_{-1}(R)=0)
$$
and for $P\in D_i(R)$ the {\it principal symbol} $\sigma_i(P)=P \mod D_{i-1}(R)$. For $P\in D_i$, $Q\in D_j$ we have $\sigma_i(P)\sigma_j(Q)=\sigma_{i+j}(PQ)$, 
$[P,Q]\in D_{i+j-1}$, hence $gr (D(R))$ is a commutative graded $R$-algebra with a Poisson bracket 
$$\{\sigma_i(P), \sigma_j(Q)\}=\sigma_{i+j-1}([P,Q])$$ 
with the usual properties.  

\subsubsection{Coordinates}

\begin{defin}
We say that $R$ has a system of coordinates $(x_1,\ldots ,x_n)\in R^n$ if 
\begin{enumerate}
\item
The map 
$$
{\Der}_k (R) \rightarrow R^n, \mbox{\quad} D\mapsto (D(x_1), \ldots ,D(x_n))
$$
is bijective.
\item
$\cap_{D\in {\Der}_k(R)}\Ker (D)=k$.
\end{enumerate}
\end{defin}
In this case there are $\partial_1,\ldots ,\partial_n\in {\Der}_k(R)$ satisfying 
$$
\partial_i(x_j)=\delta_{ij}, \mbox{\quad} \Ker (\partial_1)\cap\ldots\cap\Ker (\partial_n)=k.
$$
Then $\Der (R)$ is a free $R$-module with generators $\partial_1,\ldots ,\partial_n$ and we have $[\partial_i,\partial_j]=0$. One checks (by induction on the grade) that 
$$
gr (D(R))\simeq R[\xi_1, \ldots ,\xi_n]\mbox{\quad by } \xi_i\mapsto \partial_i \mod D_0(R)\in gr_1(D(R))
$$
and that for $P\in D_i(R)$, $Q\in D_j(R)$ we have
$$
\{\sigma_i (P), \sigma_j(Q)\} = \sum_{v=1}^n \frac{\partial \sigma_i(P)}{\partial \xi_v}\partial_v(\sigma_j(Q))- \sum_{v=1}^n \frac{\partial \sigma_j(Q)}{\partial \xi_v} \partial_v(\sigma_i(P))
$$
(where we have extended $\partial_v$ to $R[\xi_1,\ldots ,\xi_n]$ by $\partial_v(\xi_l)=0$). 

The system $(x_1,\ldots ,x_n,\xi_1,\ldots ,\xi_n)$ is called a {\it canonical coordinate system. } A typical example of a ring with a coordinate system is the ring $k[x_1,\ldots ,x_n]$ or $k[[x_1,\ldots ,x_n]]$, where in the last case we have to restrict ourself to the ring of continuous differential operators and to the space of continuous derivations with respect to the usual topology on
$k[[x_1,\ldots ,x_n]]$ given by the maximal ideal. The ring $k[[x_1,\ldots ,x_n]]$ will be important for the main part of the article.

\subsubsection{Coordinate change}

If $(y_1,\ldots , y_n)$ is another coordinate system, we get a new basis $(\partial_1',\ldots ,\partial_n')$ of ${\Der}_k(R)$ and the change of coordinates is related by the matrix 
$$
\left(
\begin{array}{ccc}
\partial_1(y_1)&\ldots &\partial_n(y_1)\\
\partial_1(y_2)&\ldots &\partial_n(y_2)\\
\vdots &\ddots &\vdots \\
\partial_1(y_n)& \ldots &\partial_n(y_n)
\end{array}
\right)
=M
$$
as $(\partial_1', \ldots ,\partial_n')M=(\partial_1, \ldots ,\partial_n)$, 
$(\xi_1', \ldots ,\xi_n')M=(\xi_1, \ldots ,\xi_n)$. 

\begin{defin}
\label{defin7}
If we have fixed a coordinate system $(x_1,\ldots x_n)$ we get besides the usual order function 
$$
\Ord (P)=\inf \{n| P\in D_n(R)\}
$$
and the usual filtration a finer $\Gamma$-filtration with $\Gamma =\dz^n$ endowed with the anti lexicographical order as an ordered group.  

Every $P\in D(R)$ can be expressed as 
$$
P=\sum_{finite} p_{i_1\ldots i_n} \partial_1^{i_1}\ldots \partial_n^{i_n}
$$
and $p_{i_1\ldots i_n} \partial_1^{i_1}\ldots \partial_n^{i_n}$ with $p_{i_1\ldots i_n}\neq 0$ are called {\it terms of $P$}. 

The {\it highest term} is the term $p_{m_1\ldots m_n} \partial_1^{m_1}\ldots \partial_n^{m_n}$ with $(m_1,\ldots   ,m_n)>(i_1,\ldots ,i_n)$ for every other term. 
\end{defin}
\begin{defin}
\label{gorder}
The element $(m_1,\ldots ,m_n)\in \Gamma$ is called $\Gamma$-order $\ord_{\Gamma}(P)$ and the term $p_{m_1\ldots m_n}\partial_1^{m_1}\ldots \partial_n^{m_n}$ is called the {\it highest term} $\HT (P)$. 
\end{defin}

Clearly, we have $\ord_{\Gamma}(PQ)=\ord_{\Gamma}(P)+\ord_{\Gamma}(Q)$ and
$\ord_{\Gamma}(P+Q)\le \max\{ \ord_{\Gamma}(P), \ord_{\Gamma}(Q)\}$ with equality if $\ord_{\Gamma}(P)\neq \ord_{\Gamma}(Q)$. Also $\HT (PQ)=\HT (P)\HT (Q)$ and $\HT (P+Q)=\HT (P)$ if $\ord_{\Gamma} (P)>\ord_{\Gamma} (Q)$. 

\subsubsection{Extensions of the ring $D(R)$}

There are several ways to extend the ring $D=D(R)$ to a ring $E\supset D$ either with an extension of the filtration $(D_n)_{n\ge 0}$ to a filtration $(E_n)_{n\in\sdz }$ with $gr (E)$ commutative such that $P\in E$ is invertible in $E$ iff $\sigma_{\Ord (P)}(P)$ is invertible in $gr (E)$ (formal micro differential operators) or to another filtered ring with an extension of the $\Gamma$-filtration and the highest term map (given by the choice of a coordinate system) with the property: $P$ is invertible in $E$ if and only if the coefficient of $\HT (P)$ is invertible in $R$ ({\it formal pseudo-differential operators}). 

We describe here formal pseudo-differential operators: 
$E=R((\partial_1^{-1}))\ldots ((\partial_n^{-1}))$ (cf. \cite{Pa1}). 

This ring can be defined iteratively, starting by defining the ring $A((\partial^{-1}))$, where $A$ is an associative not necessary commutative ring with a derivation $d$. The ring  $A((\partial^{-1}))$ is defined as a left $A$-module of all formal expressions 
$$
L=\sum_{i>-\infty}^na_i\partial^i, \mbox{\quad} a_i\in A.
$$
A multiplication can be defined according to the Leibnitz rule:
$$
(\sum_ia_i\partial^i)(\sum_jb_j\partial^j)=\sum_{i,j,k\ge 0}C_i^ka_id^k(b_j)\partial^{i+j-k}.
$$
Here we put 
$$
C_i^k=\frac{i(i-1)\ldots (i-k+1)}{k(k-1)\ldots 1} \mbox{ if $k>0$, $C_i^0=1$}.
$$
It can be checked that $A((\partial^{-1}))$ will be again an associative ring. 

For an element $P\in E$ we formally write $P= \sum_{\i\in \Gamma}r_{\i}\partial_1^{i_1}\ldots\partial_n^{i_n}$ (here some of the  coefficients $r_{\i}$ can be equal zero). 

Because of definition, there is a highest term $\HT (P)=r_{m_1\ldots m_n} \partial_1^{m_1}\ldots\partial_n^{m_n}$ with $r_{m_1\ldots m_n}\neq 0$, where $(m_1,\ldots ,m_n)\ge (i_1,\ldots ,i_n)$ if $r_{i_1,\ldots ,i_n}\neq 0$. It has the same properties as the highest term on $D(R)$. We define $\ord_{\Gamma}(P)=(m_1,\ldots ,m_n)$.   
\begin{nt}
If $P\in E$ and if $\HT (P)=r_{m_1\ldots m_n}\partial_1^{m_1}\ldots \partial_n^{m_n}$ then $r_{m_1\ldots m_n}$ is invertible in $R$ if and only if $P$ is invertible in $E$. 
\end{nt}

\begin{defin}
\label{action}
Let $R$ be a ring with a system of coordinates $(x_1,\ldots ,x_n)$, let $M=(x_1R+\ldots +x_nR)$ be an ideal and $R/M=k$. We get a right ideal $x_1E+\ldots +x_nE\subset E$ and a right $E$-module $E/(x_1E+\ldots +x_nE)\simeq k((z_1))\ldots ((z_n))$ (isomorphic as $k$-vector spaces) which gives a right $E$-module structure on $V=k((z_1))\ldots ((z_n))$. We also get an isomorphism $gr (R)\simeq k[x_1,\ldots ,x_n]$ (here the filtration in $R$ is taken to be generated by powers of $M$), and we'll denote by $\bar{a}$ the image of element $a\in R$ in $gr (R)$. 

Denote by $M_i$ the ideal $x_iR$ and for $a\in R$ define 
$$
\ord_{M_i}(a)=\sup\{n| a\in M_i^n\},\mbox{\quad} \ord_M(a)=\sup\{n| a\in M^n\}
$$ 
In analogy with definitions \ref{defin7}, \ref{gorder} on the ring $gr (R)$ is defined a finer $\Gamma$-filtration with $\Gamma =\dz^n$ endowed with the anti lexicographical order and the $\Gamma$-order function $\ord_{\Gamma}$: if $\bar{r}=\sum \bar{r}_{i_1\ldots i_n}x_1^{i_1}\ldots x_n^{i_n}\in gr (R)$, 
$$
\ord_{\Gamma}(\bar{r})=\min\{(i_1,\ldots,i_n)\in \Gamma | \bar{r}_{i_1\ldots i_n}\neq 0\}.
$$

Now for $r\in R$ define 
$$
\ord_{M_1,\ldots ,M_n}(r)=\ord_{\Gamma}(\bar{r}), 
$$
and for $P\in E$ define 
$$
\ord_{M_1,\ldots ,M_n}(P)=\min_{\i\in \Gamma}\{(\ord_{M_1,\ldots ,M_n}(r_{\i})\in \Gamma\}. 
$$
Below we will write $z^{\i}$ ($\partial^{\i}$) instead of $z_1^{i_1}\ldots z_n^{i_n}$ ($\partial_1^{i_1}\ldots \partial_n^{i_n}$) for a multi index $\i =(i_1,\ldots ,i_n)$. For $P\in E$ denote by $P(0)$ the image of $P$ modulo $M$ in $V$. 
\end{defin}

Note that $\ord_M, \ord_{M_i}, \ord_{M_1,\ldots ,M_n}$ are (pseudo)-valuations. 

\begin{prop}
\label{prop1} 
If $W_0=k[z_1^{-1},\ldots ,z_n^{-1}]\subset V$ then $D\subset E$ is characterized as $D=\{A\in E | W_0A\subseteq W_0\}$.
\end{prop}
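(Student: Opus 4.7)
The plan is to prove both inclusions directly, using the explicit form of the right $E$-action on $V\simeq E/(x_1E+\cdots+x_nE)$. Under the natural identification of $k$-vector spaces that sends $[\partial^{\mathbf{i}}]$ to $z^{-\mathbf{i}}$, the subspace $W_0=k[z_1^{-1},\ldots,z_n^{-1}]$ is exactly the $k$-span of $\{[\partial^{\mathbf{k}}]:\mathbf{k}\ge 0\}$, and hence $W_0\cdot A\subseteq W_0$ is equivalent to $(\partial^{\mathbf{k}}A)(0)\in W_0$ for every $\mathbf{k}\ge 0$. For $A=P=\sum_{\mathbf{i}}r_{\mathbf{i}}\partial^{\mathbf{i}}\in E$, Leibniz gives $\partial^{\mathbf{k}}P=\sum_{\mathbf{i}}\sum_{0\le\mathbf{l}\le\mathbf{k}}\binom{\mathbf{k}}{\mathbf{l}}\partial^{\mathbf{l}}(r_{\mathbf{i}})\partial^{\mathbf{k}+\mathbf{i}-\mathbf{l}}$, and reduction modulo $x_1E+\cdots+x_nE$ yields the key expansion
$$(\partial^{\mathbf{k}}P)(0)=\sum_{\mathbf{i}}\sum_{0\le\mathbf{l}\le\mathbf{k}}\binom{\mathbf{k}}{\mathbf{l}}\partial^{\mathbf{l}}(r_{\mathbf{i}})(0)\,z^{-(\mathbf{k}+\mathbf{i}-\mathbf{l})}.$$

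The inclusion $D\subseteq\{A:W_0A\subseteq W_0\}$ is then immediate: if $P\in D$, the sum over $\mathbf{i}$ is finite with all $\mathbf{i}\ge 0$, and $\mathbf{l}\le\mathbf{k}$ forces each exponent $-(\mathbf{k}+\mathbf{i}-\mathbf{l})$ to have nonpositive components, so $(\partial^{\mathbf{k}}P)(0)\in W_0$.

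For the opposite inclusion, given $P=\sum_{\mathbf{i}}r_{\mathbf{i}}\partial^{\mathbf{i}}\in E$ with $W_0P\subseteq W_0$, I would show that $r_{\mathbf{m}}=0$ for every $\mathbf{m}$ with some $m_j<0$. Since $R=k[[x_1,\ldots,x_n]]$ is Hausdorff in the $M$-adic topology, this reduces by Taylor expansion to proving $\partial^{\mathbf{L}}(r_{\mathbf{m}})(0)=0$ for every $\mathbf{L}\ge 0$, which I would carry out by induction on $|\mathbf{L}|=L_1+\cdots+L_n$. The base case $\mathbf{L}=0$ comes from $\mathbf{k}=0$: $P(0)=\sum_{\mathbf{m}}r_{\mathbf{m}}(0)z^{-\mathbf{m}}\in W_0$ forces $r_{\mathbf{m}}(0)=0$ whenever $\mathbf{m}$ has a negative entry. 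For the inductive step take $\mathbf{k}=\mathbf{L}$ and extract the coefficient of $z^{-\mathbf{m}}$ in $(\partial^{\mathbf{L}}P)(0)\in W_0$:
$$0=\sum_{0\le\mathbf{l}\le\mathbf{L}}\binom{\mathbf{L}}{\mathbf{l}}\partial^{\mathbf{l}}(r_{\mathbf{m}-\mathbf{L}+\mathbf{l}})(0).$$
For every $\mathbf{l}\le\mathbf{L}$ with $\mathbf{l}\neq\mathbf{L}$, the shifted index $\mathbf{m}-\mathbf{L}+\mathbf{l}$ has $j$-th coordinate $m_j-L_j+l_j\le m_j<0$ and satisfies $|\mathbf{l}|<|\mathbf{L}|$, so by the induction hypothesis that term vanishes; only $\mathbf{l}=\mathbf{L}$ survives, giving $\partial^{\mathbf{L}}(r_{\mathbf{m}})(0)=0$. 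Thus the support of $P$ lies in $\mathbb{Z}_{\ge 0}^n$, and the iterated upper bound on the support coming from $P\in E=R((\partial_1^{-1}))\cdots((\partial_n^{-1}))$ then forces this support to be finite, so $P\in R[\partial_1,\ldots,\partial_n]=D$.

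The main obstacle is the bookkeeping in the inductive step: one must check that, for every $\mathbf{l}\le\mathbf{L}$ with $\mathbf{l}\neq\mathbf{L}$, the norm $|\mathbf{l}|$ is strictly smaller (so the induction hypothesis applies) \emph{and} the shifted index $\mathbf{m}-\mathbf{L}+\mathbf{l}$ still carries a negative $j$-th coordinate (so the hypothesis is relevant). Both facts follow from the componentwise inequality $\mathbf{l}\le\mathbf{L}$. The remaining pieces — the Leibniz expansion, the Taylor recovery of $r_{\mathbf{m}}$ from its derivatives at the origin, and the finite-support conclusion — are routine.
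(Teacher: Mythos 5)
Your proof is correct, and both inclusions ultimately rest on the same computational core as the paper's argument — the Leibniz expansion of $(\partial^{\mathbf{k}}P)(0)$ in $V\simeq E/(x_1E+\cdots+x_nE)$ — but the logical organization of the hard inclusion is genuinely different. The paper argues by contrapositive with a single witness: it splits $A=A_++A_-$, sets $\mathbf{j}=\ord_{M_1,\ldots ,M_n}(A_-)$ (the anti-lexicographically minimal $\Gamma$-order of the coefficients of $A_-$), and checks that the one element $z^{-\mathbf{j}}A$ already leaves $W_0$, because the derivative terms of order $<\mathbf{j}$ die at $0$ and what survives is the nonzero bottom Taylor coefficient of $A_-$ attached to monomials $z^{-\mathbf{i}}$ with some $i_j<0$. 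You instead run a direct induction on $|\mathbf{L}|$ showing that every Taylor coefficient $\partial^{\mathbf{L}}(r_{\mathbf{m}})(0)$ with $\mathbf{m}$ carrying a negative entry vanishes, by extracting the coefficient of $z^{-\mathbf{m}}$ from $(\partial^{\mathbf{L}}P)(0)\in W_0$ and killing the $\mathbf{l}\neq\mathbf{L}$ terms with the inductive hypothesis. Your version uses all test monomials rather than one well-chosen one, but the componentwise bookkeeping ($\mathbf{l}\le\mathbf{L}$ gives both $|\mathbf{l}|<|\mathbf{L}|$ and persistence of the negative coordinate) is completely transparent, and it sidesteps the slightly delicate point in the paper's one-shot argument that the vanishing claim $\partial^{\mathbf{i}}(A_-)(0)=0$ for $\mathbf{i}<\ord_{M_1,\ldots ,M_n}(A_-)$ is phrased in the anti-lexicographic order while what one really needs is componentwise control. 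Two minor remarks: your closing step (support in $\mathbb{Z}_{\ge 0}^n$ together with membership in the iterated Laurent ring $E$ forces finite support) is right and needs exactly the one line you gave it; and your appeal to Hausdorffness of the $M$-adic topology to recover $r_{\mathbf{m}}=0$ from the vanishing of its Taylor coefficients is an implicit hypothesis in the paper's proof as well, automatic in the case $R=k[[x_1,\ldots ,x_n]]$ that the rest of the article uses.
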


\begin{proof} Clearly, $D\subset \{A\in E | W_0A\subset W_0\}$. For $A\in E$  denote by $A_+$ the sum of all monomials in $A$ belonging to $D$, and set $A_-=A-A_+$. If $A\in E$ and $A\notin D$ then  $A_-\neq 0$. In this case   we have 
$$
0\neq z^{-\ord_{M_1,\ldots ,M_n}(A_-)}A_-= \partial^{\ord_{M_1,\ldots ,M_n}(A_-)}(A_-)(0)\notin W_0,
$$
where the equality holds since $\partial^{\i}(A_-)(0)=0$ for $\i <\ord_{M_1,\ldots ,M_n}(A_-)$. 
Since $z^{-\ord_{M_1,\ldots ,M_n}(A_-)}A_+ \in W_0$, we obtain $z^{-\ord_{M_1,\ldots ,M_n}(A_-)}A\notin W_0$. So, if $A$ preserves $W_0$, $A$ must be in $D$.  
\end{proof}

\subsubsection{Completion}
\label{defin1}

Consider a ring $R$ endowed with a $M$-adic topology ($M$ ideal in $R$) which is complete: $R=\limproj_{n\ge 0}(R/M^n)$. 

If $N\subset D$ is a subalgebra we define for each sequence in $MD$, $(P_n)_{n\in \sdn}$, such that $P_n(R)$ converges uniformly in $R$ (i.e. for any $k>0$ there is $N>0$ such that $P_n(R)\subseteq M^k$ for $n\ge N$) a $k$-linear operator $P: R\rightarrow R$ by
$$
P(f)=\limind_{n\rightarrow \infty}\sum_{v=0}^nP_v(f), \mbox{\quad} P:=\sum_n P_n
$$
(this might be no longer a differential operator). 

Denote by $\hat{N}$ the algebra of these operators. One can easily check that it is associative. 

We also define 
$$
\hat{D}_N=\mbox{ algebra generated by $\hat{N}$ and $D$}.
$$  
If $(x_1,\ldots , x_n)$ is a coordinate system and $M=x_1R+\ldots +x_nR$ we can consider the algebra $\hat{D}_m:=\hat{D}_N$ given by $N=R[\partial_1,\ldots , \partial_m]$. 

The operator $P$ in $\hat{D}_m$ is uniquely defined by the sequence $p_{i_1\ldots i_m}=P(x_1^{i_1}\ldots x_m^{i_m}/i_1!\ldots i_m!)$. The elements of $\hat{D}_m$ correspond precisely to those sequences $(p_{\i}=p_{i_1\ldots i_m})_{\i\in\sdn^m}$ which converge to zero in the $M$-adic topology for $|\i |=i_1+\ldots +i_m \rightarrow \infty$. Namely,
$$
(p_{\i})\longleftrightarrow P=\sum_{\i}p_{\i}\partial_1^{i_1}\ldots \partial_m^{i_m}= \lim_{n\rightarrow \infty}(\sum_{|\i |\le n}p_{\i}\partial_1^{i_1}\ldots \partial_m^{i_m}).
$$

Then we define 
$$
\hat{D}_{m,n-m}=\mbox{ algebra generated by $\hat{D}_m$ and $D$}=\hat{D}_m[\partial_{m+1},\ldots ,\partial_n]
$$
and in the usual way 
$$
\hat{E}_{m,n-m}=\hat{D}_m((\partial_{m+1}^{-1}))\ldots ((\partial_n^{-1}))\supset R[\partial_1, \ldots , \partial_m]((\partial_{m+1}^{-1}))\ldots ((\partial_n^{-1}))=E_{m,n-m}
$$
\begin{ex}
Let's give another description of the rings $\hat{D}_m, \hat{D}_{m,n-m}$ in the case we will be interested in this paper. Namely, let $R=k[[x_1,x_2]]$. Then the coordinate system in $R$ is $(x_1,x_2)$ and $M=(x_1,x_2)$ is a maximal ideal. Then define the set
\begin{multline}
\hat{D}_1=\{a=\sum_{q\ge 0} a_{q}\partial_1^q\mbox{\quad }|  a_q\in k[[x_1,x_2]] \mbox{ and for any $N\in \dn$ there exists $n\in \dn$ such that }\\
\mbox{$\ord_{M}(a_m)>N$ for any $m\ge n$}\}. 
\end{multline}

Define
$$
\hat{D}_{1,1}=\hat{D}_1[\partial_2], \mbox{\quad} \hat{E}_{1,1}=\hat{D}_1((\partial_2^{-1})).
$$

\begin{lemma}
\label{lemma4}
The sets $\hat{D}_1\subset \hat{D}_{1,1}\subset \hat{E}_{1,1}$ are associative rings with unity. 
\end{lemma}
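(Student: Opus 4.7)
The plan is to establish the ring structure on $\hat{D}_1$ first (where essentially all the work lies) and then deduce the structures on $\hat{D}_{1,1}$ and $\hat{E}_{1,1}$ by standard extension constructions. For the product of $a=\sum_q a_q\partial_1^q$ and $b=\sum_r b_r\partial_1^r$ in $\hat{D}_1$, the Leibniz rule formally gives $ab=\sum_s c_s\partial_1^s$ with
$$
c_s=\sum_{q\ge 0}\sum_{k=\max(0,q-s)}^{q}\binom{q}{k}\,a_q\,\partial_1^k(b_{s+k-q}),
$$
so two estimates are needed: that for each $s$ the $q$-sum converges in the $M$-adic topology to an element $c_s\in R=k[[x_1,x_2]]$, and that $\ord_M(c_s)\to\infty$ as $s\to\infty$.

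For the first, note that $r=s+k-q$ always lies in $\{0,1,\ldots,s\}$, so only the finitely many coefficients $b_0,\ldots,b_s$ appear; since $\partial_1^k(b_r)\in R$ has non-negative $M$-order, each term has $M$-order at least $\ord_M(a_q)$, which tends to infinity in $q$. For the second, given $N$, pick $n_a$ so that $\ord_M(a_q)>N$ for $q\ge n_a$, and then $n_b$ so that $\ord_M(b_r)>N+n_a$ for $r\ge n_b$. If $q\ge n_a$ the term order already exceeds $N$; if $q<n_a$ then $k<n_a$ is bounded, and for $s\ge n_a+n_b$ one has $r=s+k-q\ge s-n_a\ge n_b$, so $\ord_M(\partial_1^k(b_r))\ge\ord_M(b_r)-k>N$. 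This two-case split is the main technical obstacle: the naive bound $\ord_M(b_r)-k$ is negative for large $k$, and only by separating contributions from large $q$ (where $a_q$ itself has high $M$-order) from small $q$ (where $k$ is bounded and $r$ is forced to be large) can one extract a uniform lower bound on $\ord_M(c_s)$.

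Associativity and unity for $\hat{D}_1$ would come from embedding $\hat{D}_1\hookrightarrow\End_k(R)$ by sending $a$ to the operator $f\mapsto\sum_q a_q\,\partial_1^q(f)$. Each $\partial_1^q(f)$ belongs to $R$ and hence has non-negative $M$-order, so the series converges in $R$; the map is injective by the standard reconstruction of $a_i$ from $a(x_1^i/i!)\bmod(x_2)$ already indicated in subsection~\ref{defin1}, and one verifies directly that it intertwines the Leibniz product with composition of operators when applied to every $f\in R$, so associativity descends from $\End_k(R)$ and the identity operator supplies the unit.

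For $\hat{D}_{1,1}=\hat{D}_1[\partial_2]$ I would check that $\partial_2$ acts on $\hat{D}_1$ as a $k$-derivation via $\partial_2(\sum a_q\partial_1^q):=\sum\partial_2(a_q)\partial_1^q$; the result lies in $\hat{D}_1$ since $\ord_M(\partial_2(a_q))\ge\ord_M(a_q)-1\to\infty$, and the Leibniz identity follows from the corresponding identity on $R$ together with $[\partial_1,\partial_2]=0$. The resulting skew polynomial extension with commutation relation $\partial_2c-c\partial_2=\partial_2(c)$ is then associative with unity in the standard way. Finally, $\hat{E}_{1,1}=\hat{D}_1((\partial_2^{-1}))$ is precisely the ring $A((\partial^{-1}))$ from the general construction recalled above, applied with $A=\hat{D}_1$ and $d=\partial_2$; in any product the coefficient of $\partial_2^s$ is a finite sum of elements of $\hat{D}_1$ by a quick index check using that each factor has finite highest $\partial_2$-degree, so closure is immediate and associativity is the general fact already quoted in the excerpt.
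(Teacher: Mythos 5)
Your proposal is correct, and the heart of it coincides with the paper's proof: the same Leibniz formula for the product coefficients, the same observation that each coefficient is a finite-in-$b$ sum made convergent by $\ord_M(a_q)\to\infty$, and the identical two-case split ($q$ large versus $q<n_a$ with $k\le q$ bounded and $r=s+k-q\ge n_b$ forced large) to show $\ord_M(c_s)\to\infty$; the paper's choice of auxiliary indices $n,n_1$ matches your $n_a,n_b$ exactly. Where you diverge is associativity: the paper simply invokes the argument of Mumford's \emph{Tata Lectures on Theta II}, Ch.~III, \S 11, whereas you realize $\hat{D}_1$ faithfully inside $\End_k(R)$ and let associativity descend from composition of operators. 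That route is legitimate and arguably more self-contained (it also makes Remark \ref{nt1} transparent), though to be airtight it needs the interchange-of-summation check that the formal Leibniz product really computes the composite $f\mapsto A(B(f))$, which you assert but do not carry out; note also that injectivity is recovered from the values $a(x_1^i/i!)$ themselves by induction on $i$, not from their reductions modulo $(x_2)$ --- reducing mod $(x_2)$ would discard the $x_2$-dependence of the coefficients $a_q\in k[[x_1,x_2]]$. Your treatment of $\hat{D}_{1,1}$ as an Ore extension and of $\hat{E}_{1,1}$ via the general $A((\partial^{-1}))$ construction is more explicit than the paper's ``the proof is the same,'' and is consistent with the framework the paper sets up earlier.
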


\begin{proof} Obviously, the set $\hat{D}_1$ is an abelian group. The multiplication of two elements is defined by the following formula: for two series $A=\sum_{q\ge 0} a_{q}\partial_1^q$, $B=\sum_{q\ge 0} b_{q}\partial_1^q$  
$$
AB=\sum_{q\ge 0}g_q\partial_1^q, \mbox{\quad where } g_q=\sum_{k\ge 0}\sum_{l\ge 0}C_{k}^la_k\partial_1^l(b_{q+l-k}),
$$
where we assume $b_i=0$ for $i<0$. Each coefficient $g_q$ is well defined, because  for each $N$ there are only finite number of $a_k$ with $\ord_M(a_k)<N$ and for each $k$ there are only finite number of $C_{k}^l\neq 0$. 

For any $N$ there is $n$ such that $\ord_M(a_m)>N$ for any $m\ge n$, and there is $n_1$ such that $\ord_M(b_m)>N+n$ for any $m\ge n_1$. Then for any $q\ge n_1+n$ and any $k<n$, $0\le l\le k$ we have $\ord_M(\partial_1^l(b_{q+l-k}))\ge \ord_M(b_{q+l-k})-l>N$. Therefore, $\ord_M(g_q)> N$ for any $q\ge n_1+n$. So, the multiplication is well defined in $\hat{D}_1$. 
The distributivity is obvious, and the associativity can be proved by the same arguments as in \cite[ch.III, \S 11]{Ma}.

The proof for $\hat{D}_{1,1}, \hat{E}_{1,1}$ is the same.  
\end{proof}
\end{ex}

The action of $E_{m,n-m}$ on $V=k ((z_1))\ldots ((z_n))$ does not extend to an action of $\hat{E}_{m,n-m}$ on $V$, but partially it extends. To explain this we introduce the notion:
\begin{defin}
\label{defin2,5}
Terms of $v=\sum_{(i_1,\ldots ,i_n)}v_{i_1\ldots i_n}z_1^{i_1}\ldots z_n^{i_n}$ are the elements $v_{i_1\ldots i_n}z_1^{i_1}\ldots z_n^{i_n}$ with $v_{i_1\ldots i_n}\neq 0$, we order them by the anti lexicographical order on $\Gamma$, $\ord_{\Gamma}(z_1^{i_1}\ldots z_n^{i_n})=(i_1,\ldots ,i_n)$. Each $v$ has a {\it lowest term } $\LT (v)$ (term of lowest order) whose order is called the $\Gamma$-order of $v$, $\ord_{\Gamma}(v)$. 
\end{defin}

Note that $\ord_{\Gamma}$ on $V$ is a discrete valuation of rank $n$. For an action of $E$ on $V$ we have 
$$
\ord_{\Gamma}(vP)\ge \ord_{\Gamma}(v)-\ord_{\Gamma}(P)
$$ 
with equality if and only if $\HT (P)$ has an invertible coefficient in $R$. 

Recall one definition from the theory of multidimensional local fields: 
\begin{defin}
\label{topology}
Starting with the discrete topology on the field $k$ we define a topology on the space $V$ iteratively as follows. 

If $F=k((z_1))\ldots ((z_{k-1}))$ has a topology, consider the following topology on $K=F((z_k))$. For a sequence of neighbourhoods of zero $(U_i)_{i\in \sdz}$ in $F$, $U_i=F$ for $i\gg 0$, denote $U_{\{U_i\}}=\{\sum a_iz_k^i: a_i\in U_i\}$. Then all $U_{\{U_i\}}$ constitute a base od open neighbourhoods of zero in $F((z_k))$. In particular, a sequence $u^{(n)}=\sum a_i^{(n)}z_k^i$ tends to zero if and only if there is an integer $m$ such that $u^{(n)}\in z_k^mF[[z_k]]$ for all $n$ and the sequences $a_i^{(n)}$ tend to zero for every $i$.  
\end{defin}

Now consider the following closed subspaces in $V$: 
$$
W_{m,n-m}=k[z_1^{-1},\ldots ,z_m^{-1}]((z_{m+1}))\ldots ((z_n)).
$$
One can easily check that the action of $E_{m,n-m}$ on $W_{m,n-m}$ extends to the action of $\hat{E}_{m,n-m}$ in the same way via the isomorphism 
$\hat{E}_{m,n-m}/M\hat{E}_{m,n-m}\simeq k[z_1^{-1}, \ldots ,z_m^{-1}]((z_{m+1}))\ldots ((z_n))$. At the same time, the action of $\hat{E}_{m,n-m}$ on say $\partial_1^{-1}$ (if $m\ge 1$) is not correctly defined. 

\begin{nt}
\label{nt1}
 Note that the elements of the ring $\hat{D}_{m,n-m}$ can be viewed as "extended" differential operators, because they act on the elements of the ring $R$ in the same way as the  usual differential operators. 
 
 We note also that the ring $\hat{D}_{m,n-m}$ has zero divisors (see examples in \cite{Ku3}). 
\end{nt}

\begin{prop}
\label{prop2} 
We have $\hat{D}_{m,n-m}=\{A\in \hat{E}_{m,n-m} | W_0A\subset W_0\}$ (here $W_0=k[z_1^{-1},\ldots ,z_n^{-1}]\subset W_{m,n-m}$).
\end{prop}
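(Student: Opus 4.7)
The forward containment $\hat{D}_{m,n-m}\subset\{A\in\hat{E}_{m,n-m}\mid W_0 A\subset W_0\}$ is a direct computation. Any $A\in\hat{D}_{m,n-m}$ has an expansion $A=\sum_{\mathbf{l}\ge 0}r_{\mathbf{l}}\partial^{\mathbf{l}}$ with coordinate-wise non-negative multi-indices. Under the identification $W_{m,n-m}\simeq\hat{E}_{m,n-m}/M\hat{E}_{m,n-m}$, the right action of $A$ on $z^{-\mathbf{a}}\in W_0$ equals $\overline{\partial^{\mathbf{a}}A}$; applying Leibniz, $\partial^{\mathbf{a}}r_{\mathbf{l}}=\sum_{0\le\mathbf{j}\le\mathbf{a}}\binom{\mathbf{a}}{\mathbf{j}}\partial^{\mathbf{j}}(r_{\mathbf{l}})\partial^{\mathbf{a}-\mathbf{j}}$, and reducing mod $M$ yields monomials of type $z^{\mathbf{j}-\mathbf{a}-\mathbf{l}}$ whose exponent is $\le 0$ coordinate-wise (since $\mathbf{j}\le\mathbf{a}$ and $\mathbf{l}\ge 0$), hence in $W_0$. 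The $M$-adic convergence in $\hat{D}_m$ ensures that at each fixed $(l_{m+1},\ldots,l_n)$ only finitely many $(l_1,\ldots,l_m)$ produce a nonzero $\partial^{\mathbf{j}}(r_{\mathbf{l}})(0)$, so the result is well-defined.

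For the reverse containment I would follow the strategy of Proposition \ref{prop1}. Decompose $A=A_++A_-$ with $A_+$ collecting those monomials whose exponents in $\partial_{m+1},\ldots,\partial_n$ are all non-negative. The iterated Laurent structure of $\hat{E}_{m,n-m}=\hat{D}_m((\partial_{m+1}^{-1}))\ldots((\partial_n^{-1}))$ makes $A_+$ a polynomial in $\partial_{m+1},\ldots,\partial_n$ with coefficients in $\hat{D}_m$, so $A_+\in\hat{D}_{m,n-m}$ and $W_0A_+\subset W_0$ by the first part. Hence $W_0A\subset W_0$ forces $W_0A_-\subset W_0$, and it suffices to derive a contradiction from $A_-\ne 0$.

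Assume $A_-\ne 0$ and set $\mathbf{k}=\ord_{M_1,\ldots,M_n}(A_-):=\min_{\mathbf{l}}\ord_{M_1,\ldots,M_n}(r_{\mathbf{l}})$, the anti-lexicographic minimum over the nonzero coefficients of $A_-$; this minimum exists because the anti-lex order is a well-order on $\dz_{\ge 0}^n$ (easy induction on $n$, starting from the most significant coordinate). The crucial observation is that any $\mathbf{j}\le\mathbf{k}$ coordinate-wise with $\mathbf{j}\ne\mathbf{k}$ automatically satisfies $\mathbf{j}<_{\mathrm{antilex}}\mathbf{k}$ (at the last coordinate where they differ, $\mathbf{j}$ is strictly smaller), so minimality of $\mathbf{k}$ forces $\partial^{\mathbf{j}}(r_{\mathbf{l}})(0)=0$ for all such $\mathbf{j}$ and all coefficients $r_{\mathbf{l}}$ of $A_-$. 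Expanding $z^{-\mathbf{k}}\cdot A_-=\overline{\partial^{\mathbf{k}}A_-}$ by Leibniz and reducing mod $M$, only the $\mathbf{j}=\mathbf{k}$ contribution survives, giving
\[
z^{-\mathbf{k}}\cdot A_-\;=\;\sum_{\mathbf{l}:\;\ord(r_{\mathbf{l}})=\mathbf{k}} c_{\mathbf{l}}\,z^{-\mathbf{l}},\qquad c_{\mathbf{l}}=\partial^{\mathbf{k}}(r_{\mathbf{l}})(0)\ne 0,
\]
an element of $W_{m,n-m}$ well-defined thanks to the $M$-adic convergence in $\hat{D}_m$ combined with the Laurent structure in $\partial_{m+1},\ldots,\partial_n$. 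Each $\mathbf{l}$ appearing here lies in the index set of $A_-$ and therefore has some $l_j<0$ with $j\ge m+1$, so $z^{-\mathbf{l}}$ contains a strictly positive power of $z_j$; distinct $\mathbf{l}$'s yield distinct monomials so no cancellation can occur in $V$, whence $z^{-\mathbf{k}}A_-\notin W_0$, contradicting $W_0A_-\subset W_0$. The main technical subtlety is that the infinite sums in $\hat{D}_m$ must interact correctly with the partial action on $W_{m,n-m}$: both the existence of $\mathbf{k}$ and the finite contribution of $\hat{D}_m$-coefficients to each monomial of $z^{-\mathbf{k}}\cdot A_-$ rely on the $M$-adic convergence built into the definition of $\hat{D}_m$.
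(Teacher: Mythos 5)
Your proof is correct and follows the same route as the paper, which simply declares the proof of Proposition \ref{prop2} to be identical to that of Proposition \ref{prop1}: split $A=A_++A_-$ with $A_+\in\hat{D}_{m,n-m}$, and evaluate $z^{-\ord_{M_1,\ldots ,M_n}(A_-)}$ on $A_-$ to produce a vector outside $W_0$. You additionally spell out the points that the completion $\hat{D}_m$ makes delicate (existence of the minimal order via well-orderedness of the anti-lexicographic order on $\dz_{\ge 0}^n$, and the $M$-adic convergence guaranteeing that each monomial of $z^{-\mathbf{k}}A_-$ receives only finitely many contributions), which is precisely what has to be checked beyond the argument of Proposition \ref{prop1}.
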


The proof is the same as the proof of proposition \ref{prop1}.\\

\subsubsection{Further remarks}

In this section we would like to make several comments on our definitions of rings and subspaces introduced above. 

In case of dimension one, i.e. for the rings of ordinary differential operators $D$ and pseudo-differential operators $E$, the classical KP-theory deals with a decomposition $E=E_+\oplus E_-$, where $E_+=D$. This decomposition is used then to define a KP system and develop the KP theory.
 
In \cite{Pa1} Parshin introduced an analogue of the classical KP system in higher dimensions using an analogue of the decomposition above. This system and its modifications studied later in \cite{Zhe}. 

Let's illustrate how our rings are related with a decomposition of the ring $E$ in two dimensional case. Consider the ring $E=k[[x_1,x_2]]((\partial_1^{-1}))((\partial_2^{-1}))$.

\begin{defin}
\label{W_l}
We define a vector space $W_l$ as a closed vector subspace in the field $k((z_1))((z_2))$ generated by monomials $z_1^nz_2^m$, $n\le 0$, $n,m\in \dz$. 
\end{defin}

Now we want to define the decomposition:
$$
E=E_+^l\oplus E_-^l.
$$
\begin{defin}
\label{+-}
We define the "$+$" part $E_+$ ({\it $l$-differential operators}) as follows:
$$
E_+^l=\{A\in E | W_lA\subset W_l\},
$$
the "$-$" part: 
$$
E_-^l=k[[x_1,x_2]]\partial_1^{-1}[[\partial_1^{-1}]]((\partial_2^{-1}))
$$
\end{defin}
\begin{lemma}
\label{lemma1}
The set $E_+^l$ is an associative  ring with unity; $E_+^l=k[[x_1,x_2]][\partial_1]((\partial_2^{-1}))$.
\end{lemma}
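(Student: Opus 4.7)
The plan is to establish the ring structure of $E_+^l$ and then to identify it with $k[[x_1,x_2]][\partial_1]((\partial_2^{-1}))$. The ring structure is automatic: the set of $A\in E$ with $W_lA\subset W_l$ is closed under addition and multiplication --- if $W_lA\subset W_l$ and $W_lB\subset W_l$ then $W_l(AB)=(W_lA)B\subset W_lB\subset W_l$ --- contains $1$, and inherits associativity and distributivity from $E$.

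For the inclusion $k[[x_1,x_2]][\partial_1]((\partial_2^{-1}))\subseteq E_+^l$ I would check that each generator stabilizes $W_l$, then extend to $k[[x_1,x_2]]$-coefficients and to Laurent series in $\partial_2^{-1}$ by continuity in the topology of $V$ from Definition \ref{topology}. Using the identification of $V$ with $E/(x_1E+x_2E)$, so that $z_i^k$ corresponds to the class of $\partial_i^{-k}$, the Leibniz rule yields
$$
z_1^nz_2^m\cdot\partial_1 = z_1^{n-1}z_2^m, \quad z_1^nz_2^m\cdot\partial_2^{\pm 1}=z_1^nz_2^{m\mp 1}, \quad z_1^nz_2^m\cdot x_1 = -n\,z_1^{n+1}z_2^m, \quad z_1^nz_2^m\cdot x_2 = -m\,z_1^nz_2^{m+1}.
$$
For $n\le 0$ the $z_1$-exponent of each output is again $\le 0$ (for $x_1$ the boundary case $n=0$ produces $0$ because of the vanishing factor), so all four generators preserve $W_l=k[z_1^{-1}]((z_2))$.

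For the reverse inclusion $E_+^l\subseteq k[[x_1,x_2]][\partial_1]((\partial_2^{-1}))$, I would mirror the proof of Proposition \ref{prop1}. Given $A=\sum a_{ij}\partial_1^i\partial_2^j\in E$, decompose $A=A_++A_-$, where $A_+$ collects the terms with $i\ge 0$ and $A_-$ those with $i<0$. Since $A_+\in k[[x_1,x_2]][\partial_1]((\partial_2^{-1}))$ preserves $W_l$ by the previous step, $A\in E_+^l$ if and only if $A_-\in E_+^l$. Assuming $A_-\ne 0$, set $\alpha=(\alpha_1,\alpha_2)=\ord_{M_1,M_2}(A_-)$ (a nonnegative integer vector), and consider the test element $w=z_1^{-\alpha_1}z_2^{-\alpha_2}$, which lies in $W_l$ because $\alpha_1\ge 0$. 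As in Proposition \ref{prop1}, after commuting $\partial_1^{\alpha_1}\partial_2^{\alpha_2}$ past the coefficients via Leibniz, the only correction terms surviving modulo $(x_1E+x_2E)$ are those in which the full order $\alpha$ has been consumed by differentiation, giving
$$
w\cdot A_- = [\partial_1^{\alpha_1}\partial_2^{\alpha_2}A_-] = \sum_{i<0,\,j}\partial_1^{\alpha_1}\partial_2^{\alpha_2}(a_{ij})(0,0)\,z_1^{-i}z_2^{-j}.
$$
This is nonzero by the choice of $\alpha$, and every surviving monomial has $z_1$-exponent $-i\ge 1$, so $w\cdot A_-\notin W_l$, contradicting $A_-\in E_+^l$.

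The main obstacle is the adaptation of the calculation in Proposition \ref{prop1} --- in particular, verifying that the minimum $x$-order defining $\alpha$ is attained by some pair $(i,j)$ with $i<0$ (which follows from the anti-lexicographical well-order on $\dz_{\ge 0}^2$ together with the Laurent structure in $\partial_2$ bounding $j$ from above), and that the lower-order Leibniz corrections indeed vanish modulo $ME$. Aside from these technical points, the argument is a direct transcription of the Proposition \ref{prop1} proof, with $W_0$ replaced by $W_l$ and the target subring ``polynomial in $\partial$'' weakened to ``polynomial in $\partial_1$, Laurent in $\partial_2^{-1}$''.
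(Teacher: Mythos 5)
Your proof is correct and follows essentially the same route as the paper: the stabilizer property gives the ring structure for free, the generators are checked to preserve $W_l$, and the reverse inclusion uses the decomposition $E=k[[x_1,x_2]][\partial_1]((\partial_2^{-1}))\oplus E_-^l$ together with a test-monomial argument transcribed from Proposition \ref{prop1} to show a nonzero $A_-$ cannot preserve $W_l$ (this is exactly the paper's Lemma \ref{lemma2}/Lemma \ref{lemma3}). You have merely written out details the paper leaves as ``analogous to Proposition \ref{prop1}''.
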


\Proof . The first claim follows from the second. 

The set $E_+^l$ is, obviously, an Abelian group. It is a monoid under the multiplication in the ring $E$, because for any elements $A,B\in E_+^l$ and for any $w\in W_l$ $w(AB)=(wA)B\in W_l$.

The associativity and distributivity of the multiplication follow from the corresponding properties in the ring $E$. Clearly, $k[[x_1,x_2]][\partial_1]((\partial_2^{-1}))\in  E_{+}^l$. 

The rest of the proof follows from the following two lemmas.

\begin{lemma}
\label{lemma2}
The set $E_-^l$ is an associative  ring. A non-zero operator from this set does not belong to $E_+^l$. 
\end{lemma}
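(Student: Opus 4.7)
The statement has two parts, which I would treat separately. For the ring structure, the only nontrivial content is closure under multiplication, since associativity and distributivity descend from the ambient ring $E \subset \hat{E}_{1,1}$ (whose associativity is lemma~\ref{lemma4}). Given typical $A = \sum_{j \le N,\, i \ge 1} a_{-i,j}\partial_1^{-i}\partial_2^j$ and $B = \sum_{j' \le N',\, i' \ge 1} b_{-i',j'}\partial_1^{-i'}\partial_2^{j'}$ in $E_-^l$, I would expand each product $a\partial_1^{-i}\partial_2^j \cdot b\partial_1^{-i'}\partial_2^{j'}$ via the Leibniz identity $\partial_1^{-i}\partial_2^j \cdot b = \sum_{s,t \ge 0} C_{-i}^s C_j^t\,\partial_1^s\partial_2^t(b)\,\partial_1^{-i-s}\partial_2^{j-t}$. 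Every resulting monomial of $AB$ has $\partial_1$-exponent $-(i+s+i') \le -2$ and $\partial_2$-exponent $j + j' - t \le N + N'$, and for every fixed pair $(-I, J)$ only finitely many sextuples $(i,i',j,j',s,t)$ contribute, so the coefficient of $\partial_1^{-I}\partial_2^J$ in $AB$ is a well-defined element of $k[[x_1,x_2]]$, forcing $AB \in E_-^l$.

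For the second assertion I would argue by contradiction: supposing $0 \neq A \in E_-^l \cap E_+^l$, I would show that every Taylor coefficient $\partial_1^{\alpha_1}\partial_2^{\alpha_2}(a_{-i,j})(0,0)$ vanishes, forcing $A = 0$. The key observation is that for all $\alpha_1, \alpha_2 \ge 0$ the element $w_{\alpha_1,\alpha_2} := z_1^{-\alpha_1}z_2^{-\alpha_2}$ lies in $W_l$ (its $z_1$-exponent is $-\alpha_1 \le 0$), so $w_{\alpha_1,\alpha_2} \cdot A \in W_l$. Under the identification $V \simeq E/J$ with $J = x_1 E + x_2 E$ and $\partial_1^a \partial_2^b \leftrightarrow z_1^{-a} z_2^{-b}$, the element $w_{\alpha_1,\alpha_2}$ lifts to $\partial_1^{\alpha_1}\partial_2^{\alpha_2}$, and the Leibniz rule together with reduction modulo $J$ yields in $V$
\begin{equation*}
w_{\alpha_1,\alpha_2} \cdot A = \sum_{\substack{i \ge 1,\, j \le N\\ 0 \le s \le \alpha_1,\, 0 \le t \le \alpha_2}} C_{\alpha_1}^s C_{\alpha_2}^t\,\partial_1^s\partial_2^t(a_{-i,j})(0,0)\, z_1^{s+i-\alpha_1} z_2^{t-j-\alpha_2}.
\end{equation*}
Membership of this expression in $W_l$ forces the coefficient of every $z_1^p z_2^q$ with $p \ge 1$ to vanish; setting $i = p + \alpha_1 - s$ and $j = t - q - \alpha_2$ then gives an explicit linear relation among the numbers $\partial_1^s\partial_2^t(a_{-\cdot,\cdot})(0,0)$.

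The argument concludes by induction on $\alpha_1 + \alpha_2$. The base case $\alpha_1 = \alpha_2 = 0$ reads $a_{-p,-q}(0,0) = 0$ for all $p \ge 1$, $q \in \dz$. In the inductive step at level $n = \alpha_1 + \alpha_2 > 0$, the inductive hypothesis annihilates every contribution with $s + t < n$; since $s \le \alpha_1$ and $t \le \alpha_2$, the only surviving term comes from $(s,t) = (\alpha_1, \alpha_2)$, yielding $\partial_1^{\alpha_1}\partial_2^{\alpha_2}(a_{-p,-q})(0,0) = 0$ for all $p \ge 1$, $q \in \dz$. Taylor expansion at the origin then forces $a_{-i,j} = 0$ for all admissible $(i,j)$, whence $A = 0$. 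The step needing care will be the Leibniz bookkeeping after reduction modulo $J$ and translation to $V$, in particular confirming that the index shifts $i = p + \alpha_1 - s$, $j = t - q - \alpha_2$ emerge correctly and that the $(s,t)$-sum is genuinely finite for $\alpha_1, \alpha_2 \ge 0$.
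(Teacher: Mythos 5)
Your proof is correct, but the second half takes a genuinely different route from the paper's. The paper disposes of both claims in two lines: the ring structure is declared clear, and for the second claim it invokes the argument of proposition \ref{prop1}, which tests a nonzero $A\in E_-^l$ against a \emph{single} well-chosen monomial, namely $z^{-\ord_{M_1,M_2}(A)}$: reducing $\partial^{\ord_{M_1,M_2}(A)}A$ modulo $x_1E+x_2E$ leaves a nonzero contribution supported on monomials $z_1^iz_2^{-j}$ with $i\ge 1$ (hence outside $W_l$), all lower-order derivative terms dying because they involve derivatives of the coefficients that still vanish at the origin. You instead test against \emph{all} monomials $z_1^{-\alpha_1}z_2^{-\alpha_2}$ and run an induction on $\alpha_1+\alpha_2$ to annihilate every Taylor coefficient of every $a_{-i,j}$. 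Your Leibniz bookkeeping is right: the right-module reduction does produce $\partial_1^s\partial_2^t(a_{-i,j})(0,0)\,z_1^{s+i-\alpha_1}z_2^{t-j-\alpha_2}$, the index shifts $i=p+\alpha_1-s$, $j=t-q-\alpha_2$ are correct, the $(s,t)$-sum is finite since $0\le s\le\alpha_1$, $0\le t\le\alpha_2$, and the inductive step correctly isolates the $(s,t)=(\alpha_1,\alpha_2)$ term. Your version is longer but more self-contained, needing neither the order function $\ord_{M_1,M_2}$ nor the observation about the first non-vanishing derivative; the paper's is a one-shot contrapositive leaning on the valuation-theoretic setup of definition \ref{action}. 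One small correction to the first half: $E=k[[x_1,x_2]]((\partial_1^{-1}))((\partial_2^{-1}))$ is \emph{not} a subring of $\hat{E}_{1,1}$ (the latter contains no negative powers of $\partial_1$), so associativity and distributivity of $E_-^l$ should be inherited from $E$ itself, whose associativity is established in the subsection on extensions of $D(R)$, not from lemma \ref{lemma4}; this does not affect the substance of your closure computation, which is correct.
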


{\Proof } The proof of the first statement is clear. The proof of the second statement is analogues to the proof of proposition \ref{prop1}. 

\begin{lemma}
\label{lemma3}
There exists a unique decomposition 
$$
E=E_+^l\oplus E_-^l 
$$
\end{lemma}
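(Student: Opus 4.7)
The plan is to prove existence and uniqueness of the decomposition separately, using Lemma \ref{lemma2} to handle the intersection.

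For existence, I would take an arbitrary $A\in E$ and write it as $A=\sum_{j\le N}a_j(x_1,x_2,\partial_1)\partial_2^j$ with $a_j\in k[[x_1,x_2]]((\partial_1^{-1}))$. Each coefficient $a_j$ is a Laurent series $\sum_{i\le M_j}c_{ij}(x_1,x_2)\partial_1^i$, so it splits canonically into
\[
a_j=a_j^++a_j^-,\quad a_j^+=\sum_{0\le i\le M_j}c_{ij}\partial_1^i,\quad a_j^-=\sum_{i<0}c_{ij}\partial_1^i.
\]
The first summand lies in $k[[x_1,x_2]][\partial_1]$, the second in $k[[x_1,x_2]]\partial_1^{-1}[[\partial_1^{-1}]]$. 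Setting $A_+=\sum_j a_j^+\partial_2^j$ and $A_-=\sum_j a_j^-\partial_2^j$, one has $A_+\in k[[x_1,x_2]][\partial_1]((\partial_2^{-1}))$ and $A_-\in E_-^l$ by the very definition of $E_-^l$. The inclusion $k[[x_1,x_2]][\partial_1]((\partial_2^{-1}))\subseteq E_+^l$, which was already verified in the beginning of the proof of Lemma \ref{lemma1}, then places $A_+$ inside $E_+^l$; thus $A=A_++A_-\in E_+^l+E_-^l$.

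For uniqueness, suppose $A=A_++A_-=A_+'+A_-'$ with $A_\pm,A_\pm'$ in the respective summands. Then
\[
A_+-A_+'=A_-'-A_-\in E_+^l\cap E_-^l.
\]
By Lemma \ref{lemma2}, any nonzero element of $E_-^l$ fails to lie in $E_+^l$, so this intersection is trivial and both differences vanish. This simultaneously establishes that the sum $E_+^l+E_-^l$ is direct.

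There is no real obstacle: the only subtle ingredient is the containment $k[[x_1,x_2]][\partial_1]((\partial_2^{-1}))\subseteq E_+^l$ used in the existence step, but this is already available from the first half of the proof of Lemma \ref{lemma1}. The whole argument is then just a bookkeeping exercise on the order in $\partial_1$, together with one application of Lemma \ref{lemma2} for the direct-sum property.
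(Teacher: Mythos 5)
Your proof is correct, and it fills in exactly the argument the paper leaves implicit (its proof of Lemma \ref{lemma3} is just ``The proof is clear''): the canonical splitting of each $\partial_2$-coefficient by the sign of the $\partial_1$-exponent gives existence, and Lemma \ref{lemma2} gives $E_+^l\cap E_-^l=0$, hence uniqueness. You are also right to note that only the inclusion $k[[x_1,x_2]][\partial_1]((\partial_2^{-1}))\subseteq E_+^l$ (established independently at the start of the proof of Lemma \ref{lemma1}) is needed, so there is no circularity with the full equality asserted in Lemma \ref{lemma1}.
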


The proof is clear. \\

In particular, we obtain that $E_+^l=E_{1,1}$. {\it {\bf Further we will often write $E_+$ instead of $E_+^l$ and $E_{1,1}$, and $\hat{E}_+$ instead of $\hat{E}_{1,1}$. Also we will write $\hat{D}$ instead of $\hat{D}_{1,1}$.}}

\subsection{An analogue of the Sato theorem in dimension 2}

We consider in this section the ring $E=k[[x_1,x_2]]((\partial_1^{-1}))((\partial_2^{-1}))$. 

Recall the definition of the support of a $k$-subspace in the space $k((z_1))((z_2))$. 
\begin{defin}{(\cite{ZO})}
\label{defin2}
The support of a $k$-subspace $W$ from the space $k((z_1))((z_2))$ is the closed $k$-subspace $\Sup (W)$ in the space $k((z_1))((z_2))$ generated by $\LT (a)$ for all $a\in W$. 
\end{defin}

In dimension 1 there is the Sato theorem (see for example \cite{Mu}, appendix) that describes the correspondence between points of the big cell of the Sato grassmanian and operators from the Volterra group. We can prove the following analogue of this theorem in dimension two.

\begin{theo}
\label{theo1}
For any closed $k$-subspace $W\subset  k[z_1^{-1}]((z_2))$ with $\Sup(W)=W_0=k[z_1^{-1},z_2^{-1}]$  
there exists a unique operator $S=1+S^-$, where $S^-\in \hat{D}_1[[\partial_2^{-1}]]\partial_2^{-1}$, such that 
$W_0S=W $. 
\end{theo}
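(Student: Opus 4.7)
The plan is to imitate Mulase's proof of the classical (one-dimensional) Sato theorem, adapted to the bi-graded situation imposed by the anti-lexicographical order. The two ingredients are (i) the identification of a canonical topological basis of $W$ indexed by the monomials of $W_0$, and (ii) the extraction of $S^-$ layer by layer in powers of $\partial_2^{-1}$, reducing each layer to a one-dimensional Sato problem inside $\hat{D}_1$.

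Step 1 (canonical basis of $W$). Consider the projection $\pi:k[z_1^{-1}]((z_2))\twoheadrightarrow W_0=k[z_1^{-1},z_2^{-1}]$ which kills every monomial $z_1^{i'}z_2^{j'}$ with $j'\ge 1$. I claim that under the hypothesis $\Sup(W)=W_0$ the restriction $\pi|_W:W\to W_0$ is a bijection. Injectivity is immediate: any $w\in W\cap\ker\pi$ would have $\LT(w)$ of positive $z_2$-degree, contradicting $\Sup(W)\subset W_0$. Surjectivity proceeds by descending anti-lex induction on $\alpha\in\Gamma_0:=\{(i,j):i,j\le 0\}$: starting from some $w\in W$ with $\LT(w)=z^\alpha$ (which exists by $\Sup(W)\supset W_0$), I subtract off the already-constructed $w_\beta$ (for $\beta>\alpha$ in $\Gamma_0$) to eliminate all remaining $W_0$-terms. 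The possibly-infinite sums involved converge in the topology of Definition~\ref{topology} since only finitely many $w_\beta$ can contribute to any fixed coefficient, and $W$ is closed. The outcome is a unique canonical topological basis $\{w_\alpha\}_{\alpha\in\Gamma_0}$ of $W$ of the form $w_\alpha=z^\alpha+\sum_{i'\le 0,\,j'\ge 1}c^\alpha_{i'j'}z_1^{i'}z_2^{j'}$.

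Step 2 (extracting $S^-$ layer by layer). Write $S^-=\sum_{j'\ge 1}S_{j'}\partial_2^{-j'}$ with $S_{j'}\in\hat{D}_1$, and impose $z^\alpha(1+S^-)=w_\alpha$ for every $\alpha=(i,j)\in\Gamma_0$. Using the right-action formulas on $V=k((z_1))((z_2))$ (wherein $\partial_k$ shifts $z_k$-degree down by one, $\partial_2^{-1}$ shifts $z_2$-degree up by one, and $x_k$ shifts $z_k$-degree up by one with a scalar coefficient depending on the index), one verifies that the $z_2^{j+r}$-coefficient of $z_1^iz_2^j\cdot S^-$ depends only on $S_1,\ldots,S_r$ and, for each $j'\le r$, only on the $x_2^{r-j'}$-part of the $\hat{D}_1$-expansion of $S_{j'}$. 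This gives a triangular system which is solved inductively in $r$: at level $r$, with the earlier data already determined, each remaining unknown is pinned down by an equation of prescribed-action type on the family $\{z_1^i\}_{i\le 0}$. Each such equation is a one-dimensional Sato problem in the variable $z_1$ with coefficients in $k[[x_1,x_2]]$, solvable uniquely in (the appropriate completion of) $k[[x_1,x_2]][\partial_1]$ by Mulase's classical argument \cite{Mu}.

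Step 3 (convergence and uniqueness). The principal technical obstacle is to verify that the inductively constructed $S_{j'}$ actually lie in $\hat{D}_1$, i.e., that their coefficients $a_{q,j'}(x_1,x_2)\in k[[x_1,x_2]]$ satisfy the $M$-adic decay condition $\ord_M(a_{q,j'})\to\infty$ as $q\to\infty$ from the definition of $\hat{D}_1$. I expect this to be forced by the closedness of $W$ in the topology of Definition~\ref{topology}, which enforces uniform control on the defining coefficients $c^\alpha_{i'j'}$ of the canonical basis as $\alpha$ varies; this uniformity propagates through the reduction of Step~2 into the required decay of the $a_{q,j'}$. Uniqueness of $S$ is immediate from the uniqueness of the canonical basis in Step~1 together with the triangular determination in Step~2: any two solutions must agree layer by layer in $\partial_2^{-1}$ and $x_2$.
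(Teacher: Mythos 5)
Your Step 1 reproduces the paper's canonical basis $\{w_{i,j}\}$, and the overall strategy (pin down $S$ by requiring $W_0S\subseteq W$ and matching against that basis) is the right one. However, the system you impose in Step 2, namely $z^{\alpha}(1+S^-)=w_{\alpha}$ for all $\alpha$, is not the correct system and is in general inconsistent. The correct requirement is only $z^{\alpha}S\in W$, i.e. $z_1^{-k}z_2^{-l}S=\sum_{(i,j)\le(k,l)}b_{i,j}w_{i,j}$ with a priori unknown constants $b_{i,j}$, and these extra components are genuinely nonzero: in the decomposition (\ref{eq1}) the cross-terms $\mathrm{const}\cdot z_1^{-m}z_2^{-n}\partial_1^p\partial_2^q(S)(0)$ with $(m,n)\neq(0,0)$ deposit monomials inside $W_0$ as soon as the lower slices of $S$ are nonzero. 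Already the one-dimensional toy case $W=\langle 1+z,\ z^{-i}\ (i\ge1)\rangle$ shows the problem: there $1\cdot S$ must equal $1+z$, which forces $z^{-1}S=z^{-1}+1+O(z)$, and this cannot equal $w_1=z^{-1}$. The paper resolves this by showing the constants $b_{i,j}$ are themselves forced — they must coincide with the coefficients $a_{i,j}$ of the $W_0$-part of the finite sum $\sum$ in (\ref{eq1}), which depends only on previously determined data — and then recovers the new slice as $\sum b_{i,j}w_{i,j}-\sum$; the induction runs over the Taylor slices $\partial_1^k\partial_2^l(S)(0)$ (the coefficients of $x_1^kx_2^l$ in $S$), not over $\partial_2^{-1}$-layers.

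Second, your Step 3 does not close the argument: the assertion that the decay condition defining $\hat{D}_1$ ``is forced by the closedness of $W$'' is precisely the point requiring proof, and closedness is not what does the work. What forces it is that $W\subset k[z_1^{-1}]((z_2))$: in the paper's induction each slice is exhibited as a difference of two elements of $k[z_1^{-1}][[z_2]]z_2$ (a finite combination of the tails $w_{i,j}^-$ minus the tail $\sum_-$ of $\sum$), hence is a polynomial in $z_1^{-1}$ in each $z_2$-degree, which is exactly equivalent to the statement that for every $N$ only finitely many coefficients of $S^-$ have $\ord_M\le N$, i.e. to $S^-\in\hat{D}_1[[\partial_2^{-1}]]\partial_2^{-1}$. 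Your appeal to ``Mulase's classical argument'' also does not literally apply, since the layer unknowns $S_{j'}$ live in the completion $\hat{D}_1$ of \emph{nonnegative} powers of $\partial_1$, not in the Volterra group $1+\partial^{-1}k[[x]][[\partial^{-1}]]$ treated by Sato--Mulase; the membership question is different there and is the crux here. Note finally that uniqueness comes for free from Proposition \ref{prop2}: if $W_0S=W_0S'$ then $S'S^{-1}$ preserves $W_0$, hence lies in $\hat{D}$, hence equals $1$ — no appeal to the triangular determination is needed.
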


\begin{proof} Note that any operator $S=1+S^-$, where $S^-\in \hat{D}_1[[\partial_2^{-1}]]\partial_2^{-1}$, is invertible, $S^{-1}  =1-S^-+(S^-)^2-\ldots$. If we have two operators $S_1,S_2$ of such type, then $S_1S_2 -1\in \hat{D}_1[[\partial_2^{-1}]]\partial_2^{-1}$. 

Uniqueness: if there are two such operators, $S,S'$, then $W_0=W_0S'S^{-1}$, hence by proposition \ref{prop2} $S'S^{-1}\in \hat{D}$. So, $S'S^{-1}=1$.

Existence: For any $(k,l)\in \dz_+\oplus \dz_+$ we must have $z_1^{-k}z_2^{-l}S\in W$. From definition of the action we have  
\begin{equation}
\label{eq1}
z_1^{-k}z_2^{-l}S= \partial_1^k\partial_2^l(S)(0)+\sum ,
\end{equation}
where $\sum$ is the finite sum of elements of the following type: $const\cdot z_1^{-m}z_2^{-n}\partial_1^p\partial_2^q(S)(0)$ with 
$m\le k$, $n\le l$, $p\le k$, $q\le l$ and $m+p=k$, $n+q=l$. 

Let's call the series $\partial_1^k\partial_2^l(S)(0)$ by the $(k,l)$-slice of $S$. Note that $S$ is uniquely defined by its $(k,l)$-slices for all $k,l\ge 0$: namely, the $(k,l)$-slice is the series of coefficients at $x_1^kx_2^l$,
$$
S=\sum_{k=0}^{\infty }\sum_{l=0}^{\infty }x_1^kx_2^l\partial_1^k\partial_2^l(S)(0).
$$
From (\ref{eq1}) follows that the $(k,l)$-slice of $S$ is uniquely defined by the element $z_1^{-k}z_2^{-l}S\in W$ and by the $(p,q)$-slices with $(p,q)<(k,l)$. 

We know that $\ord_{\Gamma} (z_1^{-k}z_2^{-l}S)=(k,l)$. We can take a basis $\{w_{i,j}, i,j\ge 0\}$ in $W$ with the property $w_{i,j}=z_1^{-i}z_2^{-j}+w_{i,j}^-$, where $w_{i,j}^-\in k[z_1^{-1}][[z_2]]z_2$ (note that such a basis is uniquely defined). Then, on the one hand side, we have  
$$
z_1^{-k}z_2^{-l}S=\sum_{0\le (i,j)\le (k,l)}b_{i,j}w_{i,j}, \mbox{\quad} b_{i,j}\in k.
$$
On the other hand side, we have 
$$
\sum = \sum_{0\le (i,j)\le (k,l)}a_{i,j}z_1^{-i}z_2^{-j} +\sum_-, \mbox{\quad where } \sum_-\in k[z_1^{-1}][[z_2]]z_2,
$$
and $\partial_1^k\partial_2^l(S)(0)\in k[z_1^{-1}][[z_2]]z_2$. 
So, we must have $b_{i,j}= a_{i,j}$, and therefore the element $z_1^{-k}z_2^{-l}S$ is uniquely defined by $\sum$. 

So, starting with $(k,l)=(0,0)$, we find first the $(0,0)$-slice, then, by induction, we find  the $(k,0)$-slice  for each $k>0$, and then, again by induction, we find the $(k,l)$-slice for each $(k,l)$.  
\end{proof}
 
\subsection{Several facts about partial differential operators}
\label{facts}

Further we will need several technical statements about rings of differential operators. For convenience we'll recall several known facts in the next subsection.

\subsubsection{Characteristic scheme}

If $J\subset D$ is a left ideal we get a homogeneous ideal $\langle \sigma_i(P), P\in J\rangle$ in $gr (D)$ and a subscheme defined by this ideal in either $\Spec (gr(D))$ or $\Proj (gr(D))$. Both are called the characteristic subscheme $\Ch (J)$. We consider the characteristic subscheme in $\Proj (gr(D))$. 

If we have a coordinate system, we get $\Proj (gr(D))=\Proj (R[\xi_1,\ldots ,\xi_n])=\Spec (R)\times_k\dpp_k^{n-1}$. 
Consider the case of the ideal $J=PD$, where $P$ is an operator with $\Ord (P)=m$. If $\sigma_m(P)\in k[\xi_1,\ldots ,\xi_n]$ we say that {\it the principal symbol is constant.} In this case the characteristic scheme is essentially given by the divisor of zeros of $\sigma_m(P)$ in $\dpp^{n-1}$, we call it $\Ch_0(P)$. It is unchanged by a $k$-linear change of coordinates. 

\begin{lemma}
\label{wellness}
If $P_1,\ldots P_n$ are operators with constant principal symbols (with respect to a coordinate system $(x_1,\ldots ,x_n)$) and if $\det (\partial \sigma (P_i)/\partial \xi_j)\neq 0$ then any operator $Q$ with $[P_i,Q]=0$, $i=1,\ldots , n$ has also a constant principal symbol.
\end{lemma}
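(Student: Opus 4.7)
The plan is to exploit the Poisson bracket formula from the opening of the section to convert the commutativity $[P_i,Q]=0$ into a linear system of PDEs for $\sigma_e(Q)$, and then invoke the determinantal nondegeneracy to force all $x$-derivatives of $\sigma_e(Q)$ to vanish. Let $d_i=\Ord(P_i)$ and $e=\Ord(Q)$. Since $[P_i,Q]=0$ we have $\{\sigma_{d_i}(P_i),\sigma_e(Q)\}=\sigma_{d_i+e-1}([P_i,Q])=0$. Because $\sigma_{d_i}(P_i)\in k[\xi_1,\ldots,\xi_n]$ is independent of $x_1,\ldots,x_n$, the term $\sum_v (\partial\sigma_e(Q)/\partial\xi_v)\,\partial_v(\sigma_{d_i}(P_i))$ in the Poisson bracket disappears, and the identity collapses to
$$
\sum_{v=1}^n \frac{\partial \sigma_{d_i}(P_i)}{\partial \xi_v}\,\partial_v(\sigma_e(Q))=0,\qquad i=1,\ldots,n.
$$
Setting $y_v:=\partial_v(\sigma_e(Q))\in R[\xi_1,\ldots,\xi_n]$ and $A_{iv}:=\partial\sigma_{d_i}(P_i)/\partial\xi_v\in k[\xi_1,\ldots,\xi_n]$, this is exactly the linear system $Ay=0$.

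Next, I would multiply by the adjugate of $A$, which is well defined over $k[\xi_1,\ldots,\xi_n]$, to obtain $\det(A)\cdot y_v=0$ in $R[\xi_1,\ldots,\xi_n]$ for every $v$. The polynomial $\det(A)\in k[\xi_1,\ldots,\xi_n]$ is nonzero by assumption, and since $k$ is a field its leading coefficient in the anti-lexicographic order on monomials $\xi^\alpha$ is a unit; a short comparison-of-leading-terms argument shows that such a polynomial is a non-zero-divisor in $R[\xi_1,\ldots,\xi_n]$. Hence $y_v=\partial_v(\sigma_e(Q))=0$ for every $v=1,\ldots,n$.

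Finally, extending each $\partial_v$ to $R[\xi_1,\ldots,\xi_n]$ by $\partial_v(\xi_l)=0$ and writing $\sigma_e(Q)=\sum_\alpha f_\alpha\xi^\alpha$ with $f_\alpha\in R$, the vanishing of $\partial_v(\sigma_e(Q))$ translates coefficient-by-coefficient into $\partial_v(f_\alpha)=0$ for all $v$ and $\alpha$. The defining property $\bigcap_v\Ker(\partial_v)=k$ of a coordinate system then forces $f_\alpha\in k$, so $\sigma_e(Q)\in k[\xi_1,\ldots,\xi_n]$, which is exactly the claim that $Q$ has constant principal symbol.

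The only genuinely delicate step is the cancellation $\det(A)\cdot y=0\Rightarrow y=0$: it relies crucially on the coefficients of $\sigma_{d_i}(P_i)$ lying in the field $k$ rather than merely in $R$, which is what guarantees that $\det(A)$ is a non-zero-divisor in $R[\xi_1,\ldots,\xi_n]$. Everything else is mechanical unfolding of the Poisson-bracket formula recorded earlier and the axioms of a coordinate system.
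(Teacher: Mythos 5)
Your proof is correct and follows exactly the paper's argument: apply the Poisson bracket formula to $\{\sigma(P_i),\sigma(Q)\}=0$, note that the second sum vanishes because the $\sigma(P_i)$ have constant coefficients, and use the nonvanishing of $\det(\partial\sigma(P_i)/\partial\xi_j)$ to conclude $\partial_j(\sigma(Q))=0$ for all $j$, hence $\sigma(Q)\in k[\xi_1,\ldots,\xi_n]$. You merely spell out the cancellation step (adjugate plus the non-zero-divisor property of a polynomial with unit leading coefficient) that the paper leaves implicit.
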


\begin{proof} We have 
$$
0=\{\sigma (P_i),\sigma (Q)\} =\sum_j \frac{\partial (\sigma (P_i))}{\partial \xi_j} \partial_j (\sigma (Q))
$$
for $i=1,\ldots ,n$. Since $\det (\partial \sigma (P_i)/\partial \xi_j)\in k[\xi_1,\ldots ,\xi_n]$ is not zero, we infere $\partial_j(\sigma (Q))=0$  for $j=1,\ldots n$, hence $Q$ has constant principal symbol with respect to $(x_1,\ldots ,x_n)$.  
\end{proof}

\begin{prop}
\label{techn5.2}
If $P_1,\ldots ,P_n\in D$ are commuting operators of positive order with constant principal symbols with respect to coordinates $(x_1,\ldots ,x_n)$, and if the characteristic divisors of $P_1,\ldots ,P_n$ have no common point (in $\dpp^{n-1}$), then there hold
\begin{enumerate}
\item 
If $B$ is a commutative subring in $D$ containing $P_1,\ldots ,P_n$ then $\gr (B)\subset k[\xi_1,\ldots ,\xi_n]$. 
\item
Any such subring is finitely generated of Krull dimension $n$, and also $\gr B$ is finitely generated of Krull dimension $n$. 
\end{enumerate}
\end{prop}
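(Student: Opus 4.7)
The plan is to verify the hypothesis of Lemma \ref{wellness} and then run standard commutative-algebra arguments on the associated graded. The nontrivial input is that the Jacobian $\det(\partial \sigma_{d_i}(P_i)/\partial \xi_j)$ is a nonzero element of $k[\xi_1,\ldots,\xi_n]$, where $d_i=\Ord(P_i)>0$. Since the $\sigma_{d_i}(P_i)$ are homogeneous of positive degree with no common zero in $\dpp^{n-1}$, the projective Nullstellensatz gives $(\xi_1,\ldots,\xi_n)^N \subseteq (\sigma_{d_1}(P_1),\ldots,\sigma_{d_n}(P_n))$ for some $N$. Graded Nakayama then implies that $k[\xi_1,\ldots,\xi_n]$ is finite as a module over the subring $k[\sigma_{d_1}(P_1),\ldots,\sigma_{d_n}(P_n)]$; since both sides are $n$-dimensional integral domains, the symbols must in fact be algebraically independent. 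Because $\chara k=0$ the induced finite extension of fraction fields is separable, so its Jacobian does not vanish identically. Lemma \ref{wellness} then applies to every $Q\in B$, giving $\sigma(Q)\in k[\xi_1,\ldots,\xi_n]$ and hence $\gr B\subseteq k[\xi_1,\ldots,\xi_n]$, which is part (1).

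For part (2) I first handle $\gr B$. From the tower
$$
k[\sigma_{d_1}(P_1),\ldots,\sigma_{d_n}(P_n)]\;\subseteq\;\gr B\;\subseteq\; k[\xi_1,\ldots,\xi_n]
$$
and the finiteness of the outer extension just established, $\gr B$ sits as a subring between a Noetherian polynomial ring and one of its finite modules; hence $\gr B$ is itself finite over $k[\sigma_{d_1}(P_1),\ldots,\sigma_{d_n}(P_n)]$, and in particular is a finitely generated $k$-algebra. Being an integral extension of a $k$-algebra of Krull dimension $n$, it has Krull dimension $n$ as well.

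To transport these conclusions from $\gr B$ to $B$ I use a standard filtered lifting argument. Choose homogeneous generators $\bar a_1,\ldots,\bar a_r$ of $\gr B$ as a $k[\sigma_{d_1}(P_1),\ldots,\sigma_{d_n}(P_n)]$-module and lifts $a_j\in B$ with principal symbol $\bar a_j$. Given $b\in B\cap D_k$, write $\sigma_k(b)=\sum_j c_j\bar a_j$ with the $c_j$ homogeneous polynomials in the $\sigma_{d_i}(P_i)$, lift each $c_j$ to $c_j(P_1,\ldots,P_n)\in k[P_1,\ldots,P_n]$, and replace $b$ by $b-\sum_j c_j(P_1,\ldots,P_n)a_j\in B\cap D_{k-1}$; the induction on $k$ terminates because $B\cap D_0=k$, which is forced by the degree-zero part of (1). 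This expresses $B$ as a finite module over $k[P_1,\ldots,P_n]$, and the $P_i$ are algebraically independent since any relation between them would reduce modulo lower order terms to a relation between the $\sigma_{d_i}(P_i)$. Hence $B$ is a finitely generated $k$-algebra, and $\dim B=\dim k[P_1,\ldots,P_n]=n$.

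The main obstacle is the Jacobian nonvanishing in the first paragraph: this is the unique place where both the geometric hypothesis on the $\Ch_0(P_i)$ and the assumption $\chara k=0$ are essential. Everything else is routine bookkeeping with filtered rings.
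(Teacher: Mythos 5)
Your proof is correct. For part (1) and for the statements about $\gr B$ you follow the same route as the paper: the absence of common zeros of the symbols in $\dpp^{n-1}$ forces $\det(\partial\sigma_{d_i}(P_i)/\partial\xi_j)\neq 0$, the Poisson-bracket computation (Lemma \ref{wellness}) then pins $\gr B$ inside $k[\xi_1,\ldots,\xi_n]$, and the tower $k[\sigma_{d_1}(P_1),\ldots,\sigma_{d_n}(P_n)]\subseteq \gr B\subseteq k[\xi_1,\ldots,\xi_n]$ with the outer extension finite gives finite generation and dimension $n$ for $\gr B$. You are in fact more explicit than the paper about why the Jacobian is nonzero (Nullstellensatz, graded Nakayama, separability in characteristic zero), where the paper simply observes that the symbols define a finite covering $\da^n\to\da^n$. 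The genuine divergence is in transporting the conclusion to $B$ itself: the paper forms the Rees algebra $\tilde B=\bigoplus_n B_n\subset B[s]$, invokes Bourbaki (Ch.~III, \S 2.9, Prop.~10) for finite generation, and computes $\dim B=\trdeg\Quot(\tilde B)-1=\trdeg\Quot(\gr B)=n$ via Krull's height theorem applied to the prime $(1_1)$. Your filtered lifting argument instead exhibits $B$ directly as a finite module over $k[P_1,\ldots,P_n]$ by descending induction on the order, terminating at $B\cap D_0=k$. This is more elementary and self-contained, avoids the external citation and the Rees-ring bookkeeping, and yields the slightly stronger conclusion that $B$ is module-finite over the polynomial subalgebra $k[P_1,\ldots,P_n]$ (from which both finite generation and $\dim B=n$ are immediate); the paper's route, on the other hand, packages the relation between $B$ and $\gr B$ in a form that it reuses elsewhere (cf.\ Lemma \ref{qkartier}). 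Both arguments are sound.
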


\begin{nt}
The items $1$ and partially item $2$ follow from \cite[Ch.III, \S 2.9, Prop.~10]{Bu}. The item $2$ was proved in~\cite{Kr} by Krichever in connection with integrable systems. We give here an alternative proof in the spirit of pure commutative algebra.

In section \ref{reduction} we will show that in fact there is a unique maximal commutative subring in $D$ under assumptions of lemma.
\end{nt}

\begin{proof} If $m_i=\deg (P_i)$ and $Q\in B\cap D_m$ then 
$$
0=\{\sigma_{m_i}(P_i), \sigma_{m}(Q)\} =\sum_{v=1}^n\frac{\partial \sigma_{m_i}(P_i)}{\partial \xi_v} \partial_v(\sigma_m(Q)).
$$
But $(\sigma_{m_1}(P_1),\ldots ,\sigma_{m_n}(P_n)):\da^n\rightarrow \da^n$ is a finite covering, so $\det (\partial \sigma_{m_i}(P_i)/\partial \xi_j)\neq 0$. Therefore, $\sigma_m(Q)$ must have constant coefficients. 

Now we have
$$
k[\sigma_{m_1}(P_1),\ldots ,\sigma_{m_n}(P_n)]\subset \gr (B)\subset k[\xi_1,\ldots ,\xi_n].
$$
But $k[\xi_1, \ldots ,\xi_n]$ is finitely generated  as $k[\sigma_{m_1}(P_1),\ldots ,\sigma_{m_n}(P_n)]$-module, hence $\gr B$ is finitely generated of Krull dimension $n$. 

It will be useful to introduce the analogue of the Rees ring $\tilde{B}$ constructed by the filtration on the ring $B$: $\tilde{B}=\bigoplus\limits_{n=0}^{\infty} B_n $. The ring $\tilde{B}$ is a subring of the polynomial ring $B[s]$.
 For the fields of fractions we have $\Quot \tilde{B} = \Quot B[s]$.
 Besides, $\gr B=\tilde{B}/(1_1)$, where by $1_1$ we denote the element $1\in B_1$. Using \cite[Ch.III, \S 2.9, Prop.~10]{Bu} one obtains that $B$ is finitely generated as $k$-algebra and the generators of $B$ together with the element $1_1$ generate the algebra $\tilde{B}$. Hence we can compute   the Krull dimension of the ring $B$:
$$
 \dim B=\trdeg \Quot B=\trdeg \Quot \tilde{B}-1=\trdeg \Quot (\tilde{B}/(1_1))=\trdeg \Quot (\gr B)=n  \mbox{,}
$$
since $(1_1)$ is a prime ideal of height $1$ in the ring $\tilde{B}$ by Krull's height theorem.  
\end{proof}

\subsubsection{Case of dimension 2}

From now on we consider a complete $k$-algebra $R=k[[x_1,x_2]]$ with a coordinate system $(x_1,x_2)$. 

\begin{lemma}
\label{lemma5}
Let $P,P_1,Q$ be elements of $D$ of order $m,k,n$ respectively, all with constant principal symbols. Assume $k$ is an algebraically closed field.  
\begin{enumerate}
\item\label{enu1} 
If there exists a point $p\in \Sup \Ch_0(Q)\backslash (\Sup \Ch_0(P)\cup \Sup \Ch_0(P_1))$ which is simple in $\Ch_0(Q)$, then there exists a linear change of coordinates $(x_1,x_2)=(x_1',x_2')(a_{ij})$ 
such that in the new coordinates 
\begin{equation}
\label{enu1.1}
\sigma_m(P)={\xi_2'}^m+ \sum_{q=1}^mh_q{\xi_1'}^q{\xi_2'}^{m-q}, 
\end{equation}
\begin{equation}
\label{enu1.2}
\sigma_k(P_1)=a_0{\xi_2'}^k+ \sum_{q=1}^ka_q{\xi_1'}^q{\xi_2'}^{k-q},
\end{equation}
\begin{equation}
\label{enu1.3} 
\sigma_n(Q)=\xi_1'{\xi_2'}^{n-1}+ \sum_{q=2}^nl_q{\xi_1'}^q{\xi_2'}^{n-q},
\end{equation}
where $h_q,a_q,l_q\in k$, $a_0\neq 0$. 
\item\label{enu2}
If the function $\sigma_n(P)^m/\sigma_m(Q)^n$ is not a constant, then for almost all $\alpha \in k$ the triple $P,P_1, Q_{\alpha}=Q^n+\alpha P^m$ satisfies the assumptions of item \ref{enu1}.
\end{enumerate}
\end{lemma}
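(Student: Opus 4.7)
The plan is to prove both parts by explicit linear algebra on $\mathrm{GL}_2(k)$: under a change of coordinates with matrix $M$ one has $(\xi_1,\xi_2)=(\xi_1',\xi_2')M$, so principal symbols simply pull back along a linear substitution. In particular, constant principal symbols stay constant, and it only remains to arrange the prescribed leading coefficients by an appropriate choice of $M$.

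For part~\ref{enu1}, identify $p$ with its homogeneous coordinates $[\alpha:\beta]\in\dpp^1$ in the old basis. The formulas (\ref{enu1.1})--(\ref{enu1.3}) assert that in the new coordinates the point $[0:1]$ lies outside $\Sup\Ch_0(P)\cup\Sup\Ch_0(P_1)$, is a simple zero of $\sigma_n(Q)$, and the two relevant leading coefficients equal $1$. I parameterize $M=\bigl(\begin{smallmatrix}a & b\\ \lambda\alpha & \lambda\beta\end{smallmatrix}\bigr)$ with $\lambda\in k^*$, so that $[0:1]$ in new coordinates goes to $[\alpha:\beta]$ in old. A direct computation shows that the coefficient of $\xi_2'^m$ in the new $\sigma_m(P)$ is $\lambda^m\sigma_m(P)(\alpha,\beta)$, of $\xi_2'^k$ in the new $\sigma_k(P_1)$ is $\lambda^k\sigma_k(P_1)(\alpha,\beta)$, of $\xi_2'^n$ in the new $\sigma_n(Q)$ is $\lambda^n\sigma_n(Q)(\alpha,\beta)=0$, and of $\xi_1'\xi_2'^{n-1}$ in the new $\sigma_n(Q)$ is $\lambda^{n-1}\bigl(a\,\partial_{\xi_1}+b\,\partial_{\xi_2}\bigr)\sigma_n(Q)(\alpha,\beta)$. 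Since $\sigma_m(P)(\alpha,\beta)\neq 0$, I choose $\lambda$ as an $m$-th root of its inverse (possible because $k$ is algebraically closed) to obtain (\ref{enu1.1}); then (\ref{enu1.2}) holds automatically because $\sigma_k(P_1)(\alpha,\beta)\neq 0$. For (\ref{enu1.3}), simplicity of the zero of $\sigma_n(Q)$ at $(\alpha,\beta)$ makes the linear functional $\Phi(a,b)=\bigl(a\,\partial_{\xi_1}+b\,\partial_{\xi_2}\bigr)\sigma_n(Q)(\alpha,\beta)$ surjective onto $k$, so one may solve $\Phi(a,b)=\lambda^{1-n}$. Euler's identity yields $\Phi(\alpha,\beta)=n\sigma_n(Q)(\alpha,\beta)=0$, so $\ker\Phi=\mathrm{span}(\alpha,\beta)$ and the chosen $(a,b)$ is linearly independent of $(\alpha,\beta)$, which forces $\det M=\lambda(a\beta-b\alpha)\neq 0$.

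For part~\ref{enu2}, the operator $Q_\alpha$ is a polynomial in $P$ and $Q$ with matched exponents so its principal symbol is constant, of the form $\sigma_n(Q)^{m}+\alpha\sigma_m(P)^{n}$ of degree $mn$. View $f=\sigma_m(P)^n/\sigma_n(Q)^m$ as a nonconstant morphism $\dpp^1\to\dpp^1$; the zero locus of the principal symbol of $Q_\alpha$ is then the union of the (finitely many) common zeros of $\sigma_m(P)$ and $\sigma_n(Q)$ with the fiber $f^{-1}(-1/\alpha)$. For all but finitely many $\alpha\in k^*$, the value $-1/\alpha$ avoids both the finite set of critical values of $f$ and the finite image under $f$ of $\Sup\Ch_0(P)\cup\Sup\Ch_0(P_1)$; then every point of $f^{-1}(-1/\alpha)$ is a simple zero of the principal symbol of $Q_\alpha$ lying outside $\Sup\Ch_0(P)\cup\Sup\Ch_0(P_1)$, and part~\ref{enu1} applies to $P,P_1,Q_\alpha$.

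The main obstacle is reconciling the two normalization conditions in part~\ref{enu1} with invertibility of $M$: the saving coincidence is Euler's identity, which shows that the kernel of the linear constraint coming from normalizing the $\xi_1'\xi_2'^{n-1}$ coefficient of $\sigma_n(Q)$ is exactly the direction $\mathrm{span}(\alpha,\beta)$ that must be avoided for $\det M\neq 0$, so invertibility becomes automatic rather than an independent obstruction. Part~\ref{enu2} is then a routine generic-value argument for a nonconstant morphism $\dpp^1\to\dpp^1$.
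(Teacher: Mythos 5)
Your proposal is correct and follows essentially the same route as the paper: part (\ref{enu1}) is the same explicit construction of the matrix $(a_{ij})$, with the second row a scaled representative of $p$ and the first row chosen to normalize the gradient term of $\sigma_n(Q)$ (your Euler-identity remark makes explicit why invertibility of $M$ comes for free, which the paper leaves implicit). Part (\ref{enu2}) is the paper's argument for the pencil $G_1+\alpha F_1$ (with $H=GCD(\sigma_m(P)^n,\sigma_n(Q)^m)$ factored out) repackaged in terms of the morphism $f=\sigma_m(P)^n/\sigma_n(Q)^m:\dpp^1\rightarrow\dpp^1$, whose critical values and whose images of $\Sup\Ch_0(P)\cup\Sup\Ch_0(P_1)$ correspond exactly to the paper's finite exceptional sets $S$ and $T$.
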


\begin{proof} \ref{enu1}. Let $F, F_1, G$ be the principal symbols of $P,P_1,G$ expressed in coordinates $\xi_1,\xi_2$. Then the point $p$ has coordinates say $(a_{21}:a_{22})$ and $F(a_{21},a_{22})F_1(a_{21},a_{22})\neq 0$. We can choose $(a_{21},a_{22})$ such that $F(a_{21},a_{22})=1$. 

We can choose $(a_{11},a_{12})$ such that $\det (a_{ij})\neq 0$ and 
$$
\frac{\partial \sigma}{\partial \xi_1}(a_{21},a_{22})a_{11}+\frac{\partial \sigma}{\partial \xi_2}(a_{21},a_{22})a_{12}=1
$$
(since $(\frac{\partial \sigma}{\partial \xi_1}(a_{21},a_{22}), \frac{\partial \sigma}{\partial \xi_2}(a_{21},a_{22}))\neq (0,0)$ as $(a_{21}=a_{22})$ is a simple root of $G$). 

With the coordinate change 
$$
(x_1,x_2)=(x_1',x_2')
\left (
\begin{array}{cc}
a_{11}&a_{12}\\
a_{21}&a_{22}
\end{array}
\right ),
\mbox{\quad} (\xi_1,\xi_2)=(\xi_1',\xi_2')
\left (
\begin{array}{cc}
a_{11}&a_{12}\\
a_{21}&a_{22}
\end{array}
\right )
$$
we get 
$$
\sigma_m(P)=\tilde{F}(\xi_1',\xi_2')=F(a_{11}\xi_1'+a_{21}\xi_2', a_{12}\xi_1'+a_{22}\xi_2')
$$
(and similar expressions for $\sigma_k(P_1)$, $\sigma_n(Q)$) and $\tilde{F}(0,1)=F(a_{21},a_{22})=1$, $\tilde{F}_1(0,1)=F_1(a_{21},a_{22})\neq 0$, $\tilde{G}(0,1)=0$,
$$
\frac{\partial \tilde{G}}{\partial \xi_1}(0,1)=\frac{\partial G}{\partial \xi_1}(a_{21},a_{22})a_{11}+\frac{\partial G}{\partial \xi_2}(a_{21},a_{22})a_{12}=1.
$$
So, $\sigma_m(P)$ is a monic polynomial in $\xi_2'$, $\sigma_k(P_1)$ is a monic polynomial in $\xi_2'$ up to non-zero factor, and $\sigma_n(Q)=\xi_1'\tilde{H}(\xi_1',\xi_2')$ with $\tilde{H}$ monic in $\xi_2'$.

\ref{enu2}. By hypothesis $F^n/G^m$ is not constant, so if $H=GCD(F^n,G^m)$ and $F^n=F_1H$, $G^m=G_1H$ then $\deg F_1=\deg G_1=N>0$. Since $F_1,G_1$ are coprime, the polynomial $G_1+tF_1\in k[\xi_1,\xi_2,t]$ is irreducible and defines an irreducible curve $C\subset \dpp^1\times \da^1$, and the projection to $\da^1$ defines a finite $N:1$ covering $C\rightarrow \da^1$. 

The fibres $C_{\alpha}$ over $\alpha\in k$ are divisors on $\dpp^1$, which are reduced for $\alpha\in \da^1\backslash S$, $S$ the finite branch locus of $C\rightarrow \da^1$ (cf. \cite[cor. 10.7, ch.III]{Ha}). Also, for $\alpha\neq \beta$, we have $C_{\alpha}\cap C_{\beta}=\emptyset$, since $F_1,G_1$ have no common divisor. 

Hence there is a finite set $T\subset \da^1$ such that for no point $\alpha\in \da^1\backslash T$ $C_{\alpha}$ meets the finite set $\Sup \Ch_0(P)\cup \Sup \Ch_0(P_1)$. So, for $\alpha\in \da^1\backslash (S\cup  T)$ all points of $C_{\alpha}$ have multiplicity one and $C_{\alpha}$ is disjoint to $\Sup (\Ch_0(P))\cup \Sup (\Ch_0(P_1))$. Since $\Sup (\Ch_0(H))\subset \Sup \Ch_0(P)$, $C_{\alpha}$ is also disjoint to $\Sup (\Ch_0(H))$. 

Since $G^m+\alpha F^n=\sigma_{mn}(Q^m+\alpha P^n)= (G_1+\alpha F_1)H$, any point of $C_{\alpha}\subset \Ch_0(Q^m+\alpha P^n)$ satisfies the condition of item \ref{enu1}.  
\end{proof}

\begin{defin}
\label{orter}
For a commutative ring $B$ of operators, $B\subset {D}$, we define numbers $\tilde{N}_B$, $N_B$ as 
$$
\tilde{N}_B=GCD\{\Ord (a), \mbox{\quad} a\in B\},
$$
$$
N_B=GCD\{ q(a),  \mbox{\quad $a\in B$ such that  $\ord_{\Gamma} (a)=(0,q(a))$ and $\Ord (a)=q(a)$}\}.
$$
\end{defin}

\begin{defin}
\label{ggg}
We say that a commutative ring $B\subset {D}$ is strongly admissible if $\tilde{N}_B=N_B$ (cf. also definitions \ref{verygood}, \ref{ggg1}).
\end{defin}

\begin{prop}
\label{chvar}
Let $B$ be a commutative ring of differential operators, $B\subset D$, $k$ is an algebraically closed field, such that $B$ contains two operators $P,Q$ of order $m,n$ with constant principal symbols and such that $\sigma_m(P)^n$/$\sigma_n(Q)^m$ is a non constant function on $\dpp^1$. 

Then there exist a $k$-linear change of coordinates as in lemma \ref{lemma5} such that $N_B=\tilde{N}_B$. 
\end{prop}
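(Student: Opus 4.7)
The plan is to choose a linear change of coordinates so that a finite collection of elements of $B$ witnessing $\tilde N_B=\gcd T$ (with $T:=\{\Ord(a):a\in B\}$) acquires, in the new coordinates, a $\Gamma$-leading monomial which is a pure power of $\partial_2'$; this forces $N_B=\tilde N_B$.

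First I reduce to constant principal symbols: since $\sigma_m(P)^n/\sigma_n(Q)^m$ is non-constant on $\dpp^1$, the symbols $\sigma_m(P),\sigma_n(Q)$ are algebraically independent in $k[\xi_1,\xi_2]$, so the Jacobian $\det(\partial\sigma(P_i)/\partial\xi_j)$ is a nonzero polynomial and lemma \ref{wellness} gives $\sigma(a)\in k[\xi_1,\xi_2]$ for every $a\in B$. Since $\tilde N_B$ is an ordinary integer gcd, I pick finitely many orders $r_1,\dots,r_s\in T$ with $\gcd(r_1,\dots,r_s)=\tilde N_B$ and fix operators $a_i\in B$ with $\Ord(a_i)=r_i$.

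The key observation is that for a homogeneous $g\in k[\xi_1,\xi_2]$ of degree $d$ and a $k$-linear coordinate change $(\xi_1,\xi_2)=(\xi_1',\xi_2')(a_{ij})$ of the kind appearing in lemma \ref{lemma5}, the coefficient of $(\xi_2')^d$ in the new expression of $g$ is $g(a_{21},a_{22})$, by the homogeneity identity $g(a_{21}\xi_2',a_{22}\xi_2')=(\xi_2')^d g(a_{21},a_{22})$. Since $k$ is algebraically closed and hence infinite, I can pick $p=(a_{21}:a_{22})\in\dpp^1$ outside the finite set $\{\sigma_m(P)=0\}\cup\{\sigma_n(Q)=0\}\cup\bigcup_{i=1}^{s}\{\sigma_{r_i}(a_i)=0\}$ and complete the matrix with any invertible first row. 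After this coordinate change the coefficient of $(\partial_2')^r$ in each of $P,Q,a_i$ (with $r$ the respective total order $m,n,r_i$) is an element of $k[[x_1',x_2']]$ whose constant term equals the nonzero value of the corresponding principal symbol at $p$; since every other monomial $(\partial_1')^{i_1}(\partial_2')^{i_2}$ of the operator satisfies $i_1+i_2\leq r$ and hence $i_2<r$ whenever $i_1\geq 1$, the $\Gamma$-leading term is $(\partial_2')^r$, giving $\ord_\Gamma(P)=(0,m)$, $\ord_\Gamma(Q)=(0,n)$ and $\ord_\Gamma(a_i)=(0,r_i)$ with total orders preserved.

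Consequently $m,n,r_1,\dots,r_s$ all occur as values $q(a)$ in definition \ref{orter}, so $N_B$ divides $\gcd(m,n,r_1,\dots,r_s)=\gcd(\tilde N_B,m,n)=\tilde N_B$, the last equality because $\tilde N_B\mid m$ and $\tilde N_B\mid n$. Combined with the trivial $\tilde N_B\mid N_B$ this yields $N_B=\tilde N_B$. The main delicate point is the interpretation of ``coordinate change as in lemma \ref{lemma5}'': what is needed here is only a generic $k$-linear change avoiding the finite bad locus above, not the specific normal form of item \ref{enu1} of lemma \ref{lemma5}, which would place $p$ on $\{\sigma_n(Q)=0\}$; if one wants to retain that normal form, then item \ref{enu2} can first be used to replace $Q$ by $Q_\alpha=Q^m+\alpha P^n$, for which $\sigma(Q_\alpha)(p)=0$ is compatible with $\sigma_n(Q)(p)\neq 0$.
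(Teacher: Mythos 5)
Your proof is correct and follows essentially the same route as the paper: reduce to constant principal symbols for the whole of $B$ via lemma \ref{wellness}, then perform a $k$-linear change of coordinates after which finitely many operators witnessing $\tilde N_B$ acquire $\Gamma$-leading term a pure power of $\partial_2'$, so that $N_B\mid\tilde N_B$ and hence $N_B=\tilde N_B$. The only point worth flagging is that the paper does insist on the normal form of lemma \ref{lemma5} (it takes a single $X$ with $GCD(\Ord(X),\Ord(P))=\tilde N_B$ and applies items \ref{enu1}--\ref{enu2} to the triple $P,X,Q_\alpha$), because in section \ref{reduction} the same coordinate change must also exhibit an operator of $\Gamma$-order $(1,l)$ to make $B$ quasi-elliptic --- precisely the variant you describe, correctly, in your final sentence.
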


\begin{proof} By lemma \ref{lemma5} we can assume without loss of generality that operators $P,Q$ satisfy (\ref{enu1.1}), (\ref{enu1.3}) from the statement of lemma \ref{lemma5}.  Let $X$ be an operator such that $GCD(\Ord (X), \Ord (P))= \tilde{N}_B$. 

By lemma \ref{wellness} the symbol $s_X$ of $X$ is a homogeneous polynomial with constant coefficients. Now by lemma \ref{lemma5} we obtain that there exists $\alpha$ and a change of coordinates such that the symbols $s_{Q_{\alpha}}, s_{P}, s_X$, where $Q_{\alpha}=\alpha Q^n+P^m$, satisfy
$$
s_{P}={\partial_2'}^{\Ord (P)}+\ldots, \mbox{\quad} s_{X}={\partial_2'}^{\Ord (X)}+\ldots, \mbox{\quad} s_{Q_{\alpha}}=\partial_1'{\partial_2'}^{\Ord (Q_{\alpha})-1}+\ldots .
$$
Clearly this is the needed $k$-linear change of variables.  
\end{proof}

\subsubsection{Growth conditions}

In this subsection we give several new definitions and technical statements.  

\begin{defin}
\label{defin3}
We say  that an operator $P\in \hat{E}_+$ has order $\ord_{\Gamma} (P)=(k,l)$ if $P=\sum_{s=-\infty}^lp_s\partial_2^s$, where $p_s\in \hat{D}_1$, $p_l\in k[[x_1,x_2]][\partial_1]=D_1$, and $\Ord (p_l)=k$. 

We say that an operator $P\in \hat{E}_+$, $P=\sum p_{ij}\partial_1^{i}\partial_2^{j}$ with $\ord_{\Gamma} (P)=(k,l)$ {\it satisfies the condition $A_{\alpha}$}, $\alpha\ge 0$ if 
$$
\leqno{(A_{\alpha})}
\mbox{\qquad}
\ord_M(p_{ij})\ge \left \{ 
\begin{array}{cc}
0& \mbox{\quad if  $i\le \alpha (l-j)+k$}\\
i-\alpha (l-j)-k& \mbox{\quad otherwise}
\end{array}
\right.
$$
In this case and if $\alpha \neq 0$ we define its {\it full order} as $ford(P):=k/\alpha +l$.

We will say that an operator $Q\in \hat{E}_+$, $Q=\sum q_{ij}\partial_1^{i}\partial_2^{j}$ {\it satisfies the condition $A_{\alpha}$ for order $(k,l)$} if $A_{\alpha}$ holds for all  $q_{ij}$. 
\end{defin}

\begin{defin}
\label{strong}
We say that an operator $P\in {E}_+$, $P=\sum p_{ij}\partial_1^{i}\partial_2^{j}$ with $\ord_{\Gamma} (P)=(k,l)$ {\it satisfies the strong condition $A_{\alpha}$}, $\alpha\ge 0$ if 
$$
\leqno{(B_{\alpha})}
\mbox{\quad} p_{ij}=0 \mbox{\quad} for \mbox{\quad} i > \alpha (l-j) +k.
$$

We will say that an operator $Q\in \hat{E}_+$, $Q=\sum q_{ij}\partial_1^{i}\partial_2^{j}$ {\it satisfies the strong condition $A_{\alpha}$ for order $(k,l)$} if $B_{\alpha}$ holds for all  $q_{ij}$. 
\end{defin}

\begin{defin}
\label{sstrong}
We say that an operator $P\in {E}_+$, $P=\sum p_{ij}\partial_1^{i}\partial_2^{j}$ with $\ord_{\Gamma} (P)=(k,l)$ {\it satisfies the super strong condition $A_{\alpha}$}, $\alpha\ge 0$ if 
$$
\leqno{(C_{\alpha})}
\mbox{\quad} p_{ij}=0 \mbox{\quad} for \mbox{\quad} i > \alpha (l-j) +k
$$
and the highest coefficient of the differential operator $p_{ij}$ is a constant.  

We will say that an operator $Q\in \hat{E}_+$, $Q=\sum q_{ij}\partial_1^{i}\partial_2^{j}$ {\it satisfies the super strong condition $A_{\alpha}$ for order $(k,l)$} if $C_{\alpha}$ holds for all  $q_{ij}$. 
\end{defin}

\begin{nt}
\label{nt1.9}
Clearly, we have the following implications: $C_{\alpha} \Rightarrow B_{\alpha} \Rightarrow A_{\alpha}$. 
\end{nt}

\begin{nt}
\label{nt2}
It is easy to see that if $P\in\hat{E}_+$  satisfies the condition $A_{\alpha}$ or strong $A_{\alpha}$, then it satisfies the condition $A_{\kappa}$ or strong $A_{\kappa}$ for any $\kappa > \alpha$. 
\end{nt}

\begin{defin}
\label{defin5}
Assume $P\in\hat{D}_1$, $P=\sum p_s\partial_1^s$ is an operator with the following condition:
there exists a number $f(P)$ such that $\ord_M(p_s)\ge s-f(P)$ if $s\ge f(P)$. Then we say that $P$ {\it satisfies the condition } $AA_{f(P)}$. 
\end{defin}

\begin{defin}
\label{defin5.1}
Assume $P\in {D}_1$, $P=\sum_{s\ge 0} p_s\partial_1^s$ is an operator with the following condition:
there exists a number $f(P)$ such that $p_s=0$ if $s >f(P)$. Then we say that $P$ {\it satisfies the strong condition } $AA_{f(P)}$ (or $BB_{f(P)}$). 
\end{defin}

\begin{defin}
\label{defin5.2}
Assume $P\in {D}_1$, $P=\sum_{s\ge 0} p_s\partial_1^s$ is an operator with the following condition:
there exists a number $f(P)$ such that $p_s=0$ if $s >f(P)$ and $p_{f(P)}\in k$. Then we say that $P$ {\it satisfies the super strong condition } $AA_{f(P)}$ (or $CC_{f(P)}$). 
\end{defin}

\begin{nt}
\label{nt3}
It is easy to see that if $P\in\hat{D}_1$  satisfies the condition $AA_{\kappa}$ or the (super) strong $AA_{\kappa}$, then it satisfies the condition $AA_{\kappa'}$ or the (super) strong $AA_{\kappa'}$ for any $\kappa'> \kappa$. 
\end{nt}

\begin{nt}
\label{nt4}
Note that $P\in \hat{E}_+$, $P=\sum p_s\partial_2^s$ satisfies $A_{\alpha}$ or (super) strong $A_{\alpha}$ if and only if its coefficients $p_s$ satisfy the conditions $AA_{\alpha (ford (P)-s)}$ or (super) strong $AA_{\alpha (ford (P)-s)}$ correspondingly. 

Analogously, $P$ satisfies $A_{\alpha}$ for $(k,l)$ or (super) strong $A_{\alpha}$ for $(k,l)$ if and only if its coefficients $p_s$ satisfy the conditions $AA_{\alpha (l-s)+k}$ or (super) strong $AA_{\alpha (l-s)+k}$. 

Note also that if $P$ satisfies $A_{\alpha}$ for $(k,l)$ then it satisfies $A_{\alpha}$ for any pair $(k_1,l_1)$ such that $l_1+k_1/\alpha =l+k/\alpha$. The same is true for (super) strong conditions.
\end{nt}

\begin{lemma}
\label{lemma10}
Assume $P_1,P_2\in \hat{D}_1$ satisfy the conditions $AA_{f(P_1)}, AA_{f(P_1)}$ correspondingly. Then $P_1P_2$ is an operator satisfying the condition $AA_{f(P_1)+f(P_2)}$. 

The same assertion is true for $P_1,P_2\in {D}_1$ satisfying strong or super strong conditions. 
\end{lemma}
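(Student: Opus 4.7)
The plan is to apply the multiplication formula derived in the proof of Lemma~\ref{lemma4}: writing $P_i = \sum_{s\ge 0}p_s^{(i)}\partial_1^s$, we have $P_1P_2 = \sum_{q\ge 0}g_q\partial_1^q$ with
$$g_q = \sum_{k\ge 0}\sum_{0\le l\le k}C_k^l\, p_k^{(1)}\,\partial_1^l\bigl(p_m^{(2)}\bigr),\qquad m := q+l-k\ge 0.$$
We must show that for $q\ge f(P_1)+f(P_2)$ each summand $T_{k,l}:=C_k^l\,p_k^{(1)}\partial_1^l(p_m^{(2)})$ satisfies $\ord_M(T_{k,l})\ge q - f(P_1)-f(P_2)$.

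First I would record the trivial but crucial bound $\ord_M(\partial_1^l(f))\ge \max(0,\ord_M(f)-l)$, which holds because $\partial_1$ drops the $M$-adic order by at most one and the order is always non-negative. Combined with the hypothesis $AA_{f(P_i)}$, this gives $\ord_M(p_k^{(1)})\ge \max(0,k-f(P_1))$ and $\ord_M(\partial_1^l(p_m^{(2)}))\ge \max(0,m-f(P_2)-l)$.

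The main step is a four-way case split on whether $k\ge f(P_1)$ and whether $m\ge f(P_2)$. In the case $k\ge f(P_1),\,m\ge f(P_2)$, if $m-f(P_2)-l\ge 0$ the two bounds add to exactly $q-f(P_1)-f(P_2)$, and if $m-f(P_2)-l<0$ we still get $\ord_M(T_{k,l})\ge k-f(P_1)\ge q-f(P_1)-f(P_2)$ because the strict inequality $m<l+f(P_2)$ combined with $m=q+l-k$ forces $k+f(P_2)>q$. The case $k\ge f(P_1),\,m<f(P_2)$ is handled the same way. In the case $k<f(P_1),\,m\ge f(P_2)$, when $m-f(P_2)\ge l$ we use that $k<f(P_1)$ yields $m-f(P_2)-l>q-f(P_1)-f(P_2)$; when $m-f(P_2)<l$ we fall into the remaining case. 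The last case (both $k<f(P_1)$ and either $m<f(P_2)$ or $m-l<f(P_2)$) is vacuous under our running assumption, since then $q=k+m-l<f(P_1)+f(P_2)$ contradicts $q\ge f(P_1)+f(P_2)$. Collecting these bounds yields $\ord_M(g_q)\ge q-f(P_1)-f(P_2)$, as required.

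For the strong and super strong versions the argument is considerably easier. If $p_k^{(1)}=0$ for $k>f(P_1)$ and $p_m^{(2)}=0$ for $m>f(P_2)$, a summand $T_{k,l}$ in $g_q$ is nonzero only when $k\le f(P_1)$ and $m\le f(P_2)$, forcing $q=k+m-l\le f(P_1)+f(P_2)$, so $g_q=0$ for $q>f(P_1)+f(P_2)$; this gives $BB_{f(P_1)+f(P_2)}$. Under the super strong hypothesis, at the top index $q=f(P_1)+f(P_2)$ the constraints $k\le f(P_1)$, $q+l-k\le f(P_2)$ and $l\ge 0$ leave only $(k,l)=(f(P_1),0)$, so $g_{f(P_1)+f(P_2)}=p_{f(P_1)}^{(1)}\,p_{f(P_2)}^{(2)}\in k$, yielding $CC_{f(P_1)+f(P_2)}$.

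The main obstacle is purely bookkeeping in the four-way case split, in particular handling the asymmetry between the naive bound $\ord_M(\partial_1^l f)\ge \ord_M(f)-l$ and the correct bound $\max(0,\ord_M(f)-l)$; the saving observation is that whenever the $\max$ is activated by the $0$ clause, the corresponding range of indices $(k,l,m)$ gives $q<f(P_1)+f(P_2)$, so such indices do not arise in the regime we care about.
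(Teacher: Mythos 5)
Your proof is correct and follows essentially the same route as the paper's: both expand $P_1P_2$ via the Leibniz rule and bound $\ord_M$ of each summand by splitting on whether the coefficient indices exceed $f(P_1)$ and $f(P_2)$, with the off-threshold ranges either contributing enough slack or being vacuous for $q\ge f(P_1)+f(P_2)$. The only cosmetic difference is that the paper first reduces to a single term $P_1=p_i\partial_1^i$, which collapses your four-way split into two cases; your explicit use of $\ord_M(\partial_1^l(f))\ge\max(0,\ord_M(f)-l)$ is if anything slightly more careful than the paper's displayed estimate.
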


\begin{proof} It suffices to prove lemma for $P_1=p_i\partial_1^{i}$. Let $P_2=\sum p_{2,j}\partial_1^j$ and $P_1P_2=\sum_{k=0}^{\infty}x_k\partial_1^k$. We have
$$
P_1P_2=\sum_{j=0}^{i}p_iC_i^j\partial_1^{j}(P_2)\partial_1^{i-j}
$$
whence 
$$\ord_M(x_{f(P_1)+f(P_2)+l})\ge \min_{j}\{\ord_M(p_i)+\ord_M(p_{2,f(P_1)+f(P_2)+l+j-i})\}.$$ 

If $i\le f(P_1)$, then $f(P_1)+f(P_2)+l+j-i\ge f(P_2)+l$, whence 
$$\ord_M(p_i)+\ord_M(p_{2,f(P_1)+f(P_2)+l+j-i})\ge l$$ 
for any $j$. 

If $i>f(P_1)$, then 
$$\ord_M(p_i)+\ord_M(p_{2,f(P_1)+f(P_2)+l+j-i})\ge i-f(P_1)+f(P_1)+l+j-i\ge l$$ 
for any $j$. So, $\ord_M(x_{f(P_1)+f(P_2)+l})\ge l$. 

The statement for (super) strong conditions is obvious. 
\end{proof}

\begin{lemma}
\label{lemma9}
Assume $P_1,P_2\in \hat{E}_+$ satisfy the condition $A_{\alpha}$ with $\alpha \ge 1$ for $(k_1, l_1)$ and $(k_2,l_2)$ respectively. 
Then $P_1P_2$ satisfies the condition $A_{\alpha}$ for $(k_1+k_2,l_1+l_2)$. 

In particular, if $P_1,P_2$ satisfy the condition $A_{\alpha}$ with $\alpha \ge 1$, then $P_1P_2$ satisfies the condition $A_{\alpha}$ 
and $\ord_{\Gamma}(P_1P_2)= \ord_{\Gamma}(P_1)+\ord_{\Gamma}(P_2)$. 

The same assertions are true for $P_1,P_2\in E_+$ satisfying (super) strong conditions.
\end{lemma}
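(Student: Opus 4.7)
The plan is to invoke Remark \ref{nt4} to translate both the hypothesis and the conclusion into $AA_f$-conditions on the $\hat{D}_1$-valued coefficients of $P_1,P_2,P_1P_2$ in their expansions as series in $\partial_2$, and then reduce the product estimate to Lemma \ref{lemma10}. Writing $P_i=\sum_{s\le l_i}p_{i,s}\partial_2^s$ with $p_{i,s}\in\hat{D}_1$ satisfying $AA_{\alpha(l_i-s)+k_i}$ ($i=1,2$), and expanding in $\hat{E}_+=\hat{D}_1((\partial_2^{-1}))$ by the Leibniz rule, one obtains
\begin{equation*}
P_1P_2=\sum_u g_u\,\partial_2^u,\qquad g_u=\sum_{\substack{s\le l_1,\,t\le l_2\\ s+t-r=u,\,r\ge 0}} C_s^r\, p_{1,s}\,\partial_2^r(p_{2,t}),
\end{equation*}
and the constraints force $(s,t)$ to vary in a finite box for each $u$. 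So it suffices to prove that every $g_u$ satisfies $AA_{\alpha(l_1+l_2-u)+k_1+k_2}$.

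The auxiliary fact I would isolate first is that if $Q\in\hat{D}_1$ satisfies $AA_f$ then $\partial_2^r(Q)$ satisfies $AA_{f+r}$; this is immediate from $\ord_M(\partial_2^r(q))\ge\ord_M(q)-r$ for $q\in k[[x_1,x_2]]$, applied coefficient-wise in $\partial_1$. Combined with Lemma \ref{lemma10}, each individual summand $p_{1,s}\partial_2^r(p_{2,t})$ of $g_u$ satisfies $AA_F$ where
$$
F=\alpha(l_1-s)+k_1+\alpha(l_2-t)+k_2+r=\alpha(l_1+l_2-u)+k_1+k_2+(1-\alpha)(s+t-u).
$$
Since $s+t-u=r\ge 0$ and $\alpha\ge 1$, the last term is non-positive, so $F\le\alpha(l_1+l_2-u)+k_1+k_2$; by Remark \ref{nt3} each summand then satisfies the desired condition, and a finite sum of such operators trivially does too. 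This yields the first assertion.

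For the ``in particular'' statement, the inequalities $s\le l_1$, $t\le l_2$, $r=s+t-(l_1+l_2)\ge 0$ collapse the formula to $g_{l_1+l_2}=p_{1,l_1}p_{2,l_2}$. Each factor lies in $D_1=k[[x_1,x_2]][\partial_1]$ with $\Ord=k_i$ and leading $\partial_1$-coefficient in the integral domain $k[[x_1,x_2]]$, so the product is a nonzero element of $D_1$ of order exactly $k_1+k_2$, giving $\ord_\Gamma(P_1P_2)=(k_1+k_2,l_1+l_2)$. The (super) strong versions run through the same bookkeeping, using the corresponding clauses of Lemma \ref{lemma10}; for $C_\alpha$ one additionally notes that the highest $\partial_1$-coefficient of $g_{l_1+l_2}$ is the product of two constants, hence still a constant.

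The main obstacle will be the single inequality $F\le\alpha(l_1+l_2-u)+k_1+k_2$: the shift $f\mapsto f+r$ introduced by each $\partial_2^r$ in the Leibniz expansion must be exactly absorbed by a factor $(\alpha-1)r\ge 0$, and this is the sole place in the argument where the hypothesis $\alpha\ge 1$ enters and where the statement would fail for $\alpha<1$.
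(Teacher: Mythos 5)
Your proposal is correct and follows essentially the same route as the paper's proof: reduce to individual summands via the Leibniz expansion, observe that $\partial_2^r$ shifts the $AA$-parameter by $r$, apply Lemma \ref{lemma10} to the product of coefficients, and absorb the shift using $\alpha\ge 1$ (your inequality $(1-\alpha)r\le 0$ is exactly the paper's $f(p_k)+f(\partial_2^j(p_l))\le \alpha(l_1+l_2-(k+l-j))+k_1+k_2$). Your treatment of the leading coefficient $g_{l_1+l_2}=p_{1,l_1}p_{2,l_2}$ is slightly more explicit than the paper's "clearly", but the argument is the same.
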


\begin{proof} We'll prove the assertions in (super) strong and not in strong cases simultaneously. 

It  suffices to prove lemma for the product of two summands of $P_1, P_2$, say $p_k\partial_2^k$, $p_l\partial_2^l$, since any summand in $P_i$ satisfies $A_{\alpha}$ for $(k_i,l_i)$, $i=1,2$.  
We have 
\begin{equation}
\label{vs}
(p_k\partial_2^k)(p_l\partial_2^l)=\sum_{j=0}^{\infty}C_k^jp_k\partial_2^j(p_l)\partial_2^{k+l-j}.
\end{equation}
Note that $p_k$ satisfies the condition $AA_{f(p_k)}$, where $f(p_k)=\alpha (l_1-k)+k_1$, 
$p_l$ satisfies the condition $AA_{f(p_l)}$, where $f(p_l)=\alpha (l_2-l)+k_2$. Note also that $\partial_2^j(p_l)$ satisfies the condition $AA_{f(p_l)}$ in the (super) strong case and it satisfies the condition $AA_{f(p_l)+j}$ not in the strong case. So, by lemma \ref{lemma10} we have $f(p_k\partial_2^j(p_l))=f(p_k)+f(\partial_2^j(p_l))\le \alpha (l_1+l_2-(k+l-j))+k_1+k_2$, whence each summand of (\ref{vs}) satisfies the condition $A_{\alpha}$ in definition \ref{defin3} for $(k_1+k_2,l_1+l_2)$. Hence, the same is true for $P_1P_2$. 

Clearly, $\ord_{\Gamma}(P_1P_2)= \ord_{\Gamma}(P_1)+\ord_{\Gamma}(P_2)$. If $P_i$ satisfy $A_{\alpha}$, then they satisfy $A_{\alpha}$ for $\ord_{\Gamma} (P_i)$. Therefore, $P_1P_2$ satisfies $A_{\alpha}$ for $\ord_{\Gamma}(P_1P_2)$, i.e. $P_1P_2$ satisfies $A_{\alpha}$.  
\end{proof}

\begin{corol}
\label{corol1}
If the operator $S=1-S^-$, where $S^-\in \hat{D}_1[[\partial_2^{-1}]]\partial_2^{-1}$, satisfies the condition $A_{\alpha}$ or (super) strong $A_{\alpha}$ with $\alpha \ge 1$, then the operator $S^{-1}$ also satisfies it. 
\end{corol}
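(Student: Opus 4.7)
The plan is to exploit the geometric series
\[
S^{-1} \;=\; \sum_{n\ge 0} (S^-)^n \;=\; 1 + S^- + (S^-)^2 + \dots,
\]
and then argue that each summand individually satisfies the required growth condition for the same reference order $(0,0)$, so that subadditivity of $\ord_M$ lets us conclude.

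First I would verify convergence in $\hat{E}_+$. Since $S^- \in \hat{D}_1[[\partial_2^{-1}]]\partial_2^{-1}$, the operator $(S^-)^n$ lies in $\hat{D}_1[[\partial_2^{-1}]]\partial_2^{-n}$, so for each fixed $j$ only the $n$ with $1\le n \le |j|$ can contribute to the $\partial_2^j$-coefficient of $S^{-1}$. Thus the coefficient at $\partial_1^i\partial_2^j$ of $S^{-1}$ is a \emph{finite} sum of coefficients coming from the $T^n := (S^-)^n$, and the series defines a bona fide element of $\hat{E}_+$.

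The key observation is that $\ord_{\Gamma}(S)=(0,0)$, because the $(0,0)$-term is $1$ and all other terms come from $S^-$ with $\partial_2$-order $\le -1$. Hence the hypothesis ``$S$ satisfies $A_\alpha$'' unfolds, by Definition \ref{defin3}, to
\[
\ord_M(p_{ij}) \;\ge\; \max\{0,\;i+\alpha j\}
\]
for every coefficient $p_{ij}$ of $S$; for the coefficient of $\partial_1^0\partial_2^0$ this is trivial, while for the coefficients of $S^-$ this exactly says that $S^-$ itself satisfies the condition $A_\alpha$ \emph{for the order} $(0,0)$ in the sense of Definition \ref{defin3}. In the (super) strong cases one reads off analogously that $S^-$ satisfies (super) strong $A_\alpha$ for $(0,0)$.

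Now apply Lemma \ref{lemma9} iteratively. Since $S^-$ satisfies $A_\alpha$ for $(0,0)$, the lemma gives that $(S^-)^2$ satisfies $A_\alpha$ for $(0+0,\,0+0)=(0,0)$, and by induction $T^n=(S^-)^n$ satisfies $A_\alpha$ for $(0,0)$ for every $n\ge 1$. This is exactly the point where the argument closes on itself: the order $(0,0)$ is the unique fixed point of addition, so we never leave it. In the (super) strong setting, the (super) strong version of Lemma \ref{lemma9} yields the same conclusion.

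Finally, fix $(i,j)\ne (0,0)$. The $\partial_1^i\partial_2^j$-coefficient of $S^{-1}$ is a finite sum $\sum_{n\ge 1} T^n_{ij}$, each summand satisfying $\ord_M(T^n_{ij})\ge \max\{0,i+\alpha j\}$ by the previous paragraph. Subadditivity of $\ord_M$ (it is a valuation on $k[[x_1,x_2]]$) then gives
\[
\ord_M\Bigl(\sum_{n\ge 1} T^n_{ij}\Bigr) \;\ge\; \max\{0,\;i+\alpha j\},
\]
which, together with the trivial $(0,0)$-case, says precisely that $S^{-1}$ satisfies $A_\alpha$ for its $\Gamma$-order $(0,0)$, i.e.\ $S^{-1}$ satisfies $A_\alpha$. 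The strong case is identical, noting that if each $T^n_{ij}$ vanishes for $i>-\alpha j$ then so does their sum; the super strong case uses in addition that a sum of constants is a constant, so the top-$\partial_1$ coefficients remain in $k$. The only step that required real thought was recognising that choosing $(0,0)$ (rather than $\ord_\Gamma(S^-)$) as the reference order makes Lemma \ref{lemma9} stable under iteration, which would otherwise fail because the line $l+k/\alpha = \mathrm{const}$ drifts when $\mathrm{ford}(S^-)\ne 0$.
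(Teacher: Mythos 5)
Your proof is correct and follows essentially the same route as the paper: the paper's (very terse) proof likewise rests on $\ord_{\Gamma}(S)=(0,0)$, the expansion $S^{-1}=1+\sum_{q\ge 1}(S^-)^q$, and lemma \ref{lemma9} applied with reference order $(0,0)$. You merely spell out the details the paper leaves implicit (finiteness of the sum in each $\partial_2$-degree, the ultrametric inequality for $\ord_M$, and the constancy of top coefficients in the super strong case), all of which check out.
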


\begin{proof} It follows from the proof of lemma \ref{lemma9}, since $\ord_{\Gamma} (S)=(0,0)$ and $S^{-1}=1+\sum_{q=1}^{\infty}(S^-)^{q}$.  
\end{proof}

\begin{corol}
\label{corol1.5}
Consider the set 
$$
\Pi_{\alpha}=\{P\in \hat{E}_+| \mbox{\quad there exist  $(k,l)\in \dz_+\oplus \dz$ such that $P$ satisfies $A_{\alpha}$ for $(k,l)$}\}\subset \hat{E}_+.
$$
It is an associative subring with unity.
\end{corol}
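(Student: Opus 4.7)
My plan is to verify the subring axioms directly from Definition~\ref{defin3}, with associativity and distributivity inherited from $\hat{E}_+$. Throughout I assume $\alpha\ge 1$, the hypothesis implicit in the corollary since its proof must invoke Lemma~\ref{lemma9}. The unit $1\in\hat{E}_+$ has its single nonzero coefficient $p_{00}=1$ with $\ord_M(p_{00})=0$, which meets the bound of $A_\alpha$ for $(k,l)=(0,0)$, so $1\in\Pi_\alpha$.

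The first technical step I would record is a monotonicity observation: if $P$ satisfies $A_\alpha$ for $(k,l)$ and $(k',l')\ge(k,l)$ componentwise (with $k'\in\dz_+$), then $P$ satisfies $A_\alpha$ for $(k',l')$. This is immediate from inspection of the piecewise-linear bound $\max\{0,\,i-\alpha(l-j)-k\}$ on $\ord_M(p_{ij})$: since $\alpha\ge 0$, both raising $k$ and raising $l$ only decrease this bound, so the condition only weakens. This complements the "diagonal" invariance recorded in Remark~\ref{nt4}.

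With monotonicity, closure under subtraction is routine. For $P_1,P_2\in\Pi_\alpha$ with parameters $(k_1,l_1)$ and $(k_2,l_2)$, set $(k,l):=(\max(k_1,k_2),\max(l_1,l_2))\in\dz_+\oplus\dz$; by monotonicity both $P_i$ satisfy $A_\alpha$ for this common pair. Since $\ord_M$ is a valuation, each coefficient $(P_1-P_2)_{ij}=(P_1)_{ij}-(P_2)_{ij}$ still satisfies $\ord_M\ge\min_i\ord_M((P_i)_{ij})$, which is at least the common lower bound, giving $P_1-P_2\in\Pi_\alpha$. Closure under multiplication is exactly Lemma~\ref{lemma9}: if $P_i$ satisfy $A_\alpha$ for $(k_i,l_i)$, then $P_1P_2$ satisfies $A_\alpha$ for $(k_1+k_2,l_1+l_2)$, and since $k_1+k_2\in\dz_+$ the pair remains in $\dz_+\oplus\dz$.

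No step carries a real obstacle; the only point that requires a moment's thought is the monotonicity assertion, which is however a direct inspection of the piecewise-linear formula in Definition~\ref{defin3}. The main substance of the proof is packaged away in Lemma~\ref{lemma9}, so the role of this corollary is essentially to reorganize that lemma together with the trivial observations above into the statement that $\Pi_\alpha$ is a subring.
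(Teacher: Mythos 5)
Your proof is correct and follows essentially the same route as the paper's: closure under multiplication is delegated to Lemma~\ref{lemma9}, and closure under addition/subtraction is obtained by placing both operators under a common pair for which $A_{\alpha}$ holds and using that $\ord_M$ is a (pseudo)valuation. The only cosmetic difference is the choice of that common pair --- you take the componentwise maximum, while the paper takes the pair with the larger value of $l+k/\alpha$ (via remark~\ref{nt4}); both rest on the same monotonicity of the bound in Definition~\ref{defin3}.
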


\begin{proof} Take $P_1,P_2\in \Pi_{\alpha}$. By lemma \ref{lemma9}, we have $P_1P_2\in \Pi_{\alpha}$. We also have $P_1+P_2\in \Pi_{\alpha}$, because $P_1+P_2$ satisfies  $A_{\alpha}$ for those pair $(k_i,l_i)$, $i=1,2$, where the value of $l_i+k_i/\alpha$ is greater (cf. also remark \ref{nt4}). So, $\Pi_{\alpha}$ is an associative subring of $\hat{E}_+$ with unity 1.  
\end{proof}  

\begin{lemma}
\label{lemma6}
Let $P,Q\in \hat{D}\subset \hat{E}_+$ be commuting monic operators such that $\ord_{\Gamma} (P)=(0,k)$, $\ord_{\Gamma} (Q)= (1,l)$. Then 
\begin{enumerate}
\item\label{a}
There exist unique operators $L_1\in \hat{E}_+$, $L_2\in \hat{E}_+$ such that $L_2^k=P$, $L_1L_2^l=Q$, $[L_1,L_2]=0$. 
\item\label{b}
If $P,Q$ satisfy the condition $A_{\alpha}$ with $\alpha \ge 1$  then $L_1,L_2$ satisfy the condition $A_{\alpha}$.
\item\label{c}
If $P,Q\in D$ then $L_1,L_2\in \hat{E}_+\cap E$.
\item\label{d}
If $P,Q\in D$ satisfy the (super) strong condition $A_{\alpha}$ with $\alpha \ge 1$  then $L_1,L_2$ satisfy the (super) strong condition $A_{\alpha}$.
\end{enumerate} 
\end{lemma}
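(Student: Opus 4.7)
The plan is to construct $L_2$ first as the unique monic $k$-th root of $P$ in $\hat{E}_+$ via Sato-style recursion, then set $L_1 := Q L_2^{-l}$ and prove commutativity by a leading-symbol argument; items \ref{b}, \ref{c}, \ref{d} follow by tracking the recursion formulas. For item \ref{a}, write $L_2 = \partial_2 + \sum_{j \ge 0} u_{-j} \partial_2^{-j}$ with $u_{-j} \in \hat{D}_1$. Matching the coefficient of $\partial_2^{k-1-j}$ in $L_2^k = P$ gives
\[
k\,u_{-j} \;+\; R_j(u_0, \dots, u_{-j+1}) \;=\; p_{k-1-j},
\]
where $p_s$ denotes the $\partial_2^s$-coefficient of $P$ and $R_j$ is a universal polynomial in the previously-determined $u_{-m}$ and their iterated $\partial_2$-derivatives. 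Since $\chara k = 0$ this uniquely determines every $u_{-j}$, giving existence and uniqueness of $L_2$. For commutativity, set $T := [Q, L_2]$ and use
\[
0 \;=\; [Q, P] \;=\; [Q, L_2^k] \;=\; \sum_{i=0}^{k-1} L_2^{i}\, T\, L_2^{k-1-i}.
\]
As $L_2$ is monic of $\Gamma$-order $(0,1)$, each summand has $\Gamma$-order $\ord_\Gamma(T) + (0, k-1)$ with leading coefficient in $k[[x_1,x_2]]$ equal to that of $T$ (commutator corrections drop to strictly lower $\Gamma$-order). The leading coefficient of the sum is therefore $k$ times that of $T$, and $\chara k = 0$ forces $T = 0$. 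Since $L_2$ is invertible in $\hat{E}_+$, set $L_1 := Q L_2^{-l}$; then $L_1 L_2^l = Q$ and $[L_1, L_2] = [Q, L_2] L_2^{-l} = 0$, and uniqueness of $L_1$ follows from that of $L_2$.

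For item \ref{b}, by Remark \ref{nt4} it suffices to prove $u_{-j} \in AA_{\alpha(j+1)}$. Induct on $j$; the base case $u_0 = p_{k-1}/k \in AA_{\alpha}$ is immediate. A combinatorial count for the expansion of $L_2^k$ shows that every monomial $\prod_i \partial_2^{r_i}(u_{-m_i})$ (with $N$ factors) appearing in $R_j$ satisfies $N + \sum_i m_i + \sum_i r_i = j+1$. By the inductive hypothesis $u_{-m_i} \in AA_{\alpha(m_i+1)}$, hence $\partial_2^{r_i}(u_{-m_i}) \in AA_{\alpha(m_i+1)+r_i}$, and Lemma \ref{lemma10} puts the product in $AA_{\alpha(N+\sum m_i)+\sum r_i}$. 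Since $\alpha \ge 1$ this index is at most $\alpha(N+\sum m_i + \sum r_i) = \alpha(j+1)$, so $R_j \in AA_{\alpha(j+1)}$ and hence so is $u_{-j}$. For $L_1$: factor $L_2 = \partial_2(1 + \partial_2^{-1}T')$ with $T' := L_2 - \partial_2$; then $1 + \partial_2^{-1}T'$ is of the form handled by Corollary \ref{corol1}, so its inverse satisfies $A_\alpha$. By Lemma \ref{lemma9} and Corollary \ref{corol1.5}, $L_2^{-l}$ and therefore $L_1 = Q L_2^{-l}$ satisfy $A_\alpha$.

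Items \ref{c} and \ref{d} follow from the same recursion. For \ref{c}, if $P, Q \in D$ then all $p_s, q_s$ lie in $D_1 = k[[x_1,x_2]][\partial_1]$, which is closed under sums, products, $\partial_2$-derivatives, and division by $k \in k^\times$; by induction $u_{-j} \in D_1$, so $L_2 \in D_1((\partial_2^{-1})) = \hat{E}_+ \cap E$, and the analogous expansion of $L_2^{-1}$ keeps $L_1 \in \hat{E}_+ \cap E$. For \ref{d}, the induction of \ref{b} repeats verbatim with the (super) strong analogues of Lemmas \ref{lemma10}, \ref{lemma9} and Corollary \ref{corol1}: $\partial_2$ preserves strong $AA_f$ and improves super strong $AA_f$ to super strong $AA_{f-1}$ (the top coefficient is a constant and is killed), so the bookkeeping is only sharper. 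The main obstacle is precisely this bookkeeping in \ref{b}, \ref{d} — controlling how products and $\partial_2$-derivatives combine in each $R_j$ without overflowing the target index $\alpha(j+1)$ — and the hypothesis $\alpha \ge 1$ is exactly what allows the additive derivative shifts to be absorbed into the multiplicative bound.
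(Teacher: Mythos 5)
Your proof is correct and follows essentially the same route as the paper's: the recursion $ku_{-j}+R_j=p_{k-1-j}$ determining the monic $k$-th root $L_2$, the definition $L_1=QL_2^{-l}$, and an induction on $j$ showing $u_{-j}$ satisfies $AA_{\alpha(j+1)}$. The only differences in execution are that you verify $R_j\in AA_{\alpha(j+1)}$ by a direct monomial count (using $N+\sum m_i+\sum r_i=j+1$) where the paper identifies $R_j$ as a coefficient of the truncated power $(\partial_2+u_0+\dots+u_{-j+1}\partial_2^{-j+1})^k$ and invokes Lemma \ref{lemma9}, and that you prove $[Q,L_2]=0$ explicitly by the leading-term telescoping argument, which the paper records separately as Lemma \ref{ochevidn}; your parenthetical that $\partial_2$ sends super strong $AA_f$ to super strong $AA_{f-1}$ is slightly imprecise (one only gets strong $AA_{f-1}$), but your bookkeeping does not actually rely on it.
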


\begin{proof} \ref{a}. We can find each coefficient of the operator $L_2=\partial_2+u_0+u_{-1}\partial_2^{-1}+\ldots $ step by step, by solving the system of equations, which can be obtained by comparing the coefficients of $P$ and $L_2^k$:
\begin{equation}
\label{vs5}
ku_0=p_{k-1}, \mbox{\quad} ku_{-i}+F(u_0,\ldots ,u_{-i+1})=p_{k-1-i},
\end{equation}
where $F$ is a polynomial in $u_0,\ldots ,u_{-i+1}$ and their derivatives. 
Clearly, this system is uniquely solvable. So, the operator $L_2$ is uniquely defined. Note that $L_2$ is invertible element, $L_2^{-1}\in \hat{E}_+$ and $\ord_{\Gamma} (L_2^{-1})=(0,-1)$. Therefore, $L_1=QL_2^{-l}$ is also uniquely defined. 

The same arguments show that item \ref{c} is true. 

\ref{b} and \ref{d}. We'll prove the assertions in (super) strong and not in strong cases simultaneously.

It follows from (\ref{vs5}) that $u_0$ satisfies $A_{\alpha}$ for $\ord_{\Gamma} (L_2)$ or, equivalently, by remark \ref{nt4},  $u_0$ satisfies $AA_{\alpha}$.  Assume that $F(u_0,\ldots ,u_{-i+1})$ in (\ref{vs5}) satisfies $AA_{\alpha (1+i)}$. Then by (\ref{vs5}) $u_{-i}$ will also satisfy $AA_{\alpha (1+i)}$. Let's show that $F(u_0,\ldots ,u_{-i})$ satisfies $AA_{\alpha (2+i)}$. 

We have 
$$
L_2^k=(\partial_2+u_0+ \ldots +u_{-i}\partial_2^{-i})^k+ u_{-i-1}\partial_2^{-i-2+k}+ \mbox{higher order terms}. 
$$
By lemma \ref{lemma9} and remark \ref{nt4} the operator $(\partial_2+u_0+ \ldots +u_{-i}\partial_2^{-i})^k$ satisfies $A_{\alpha}$. But $F(u_0,\ldots ,u_{-i})$ is a coefficient at $\partial_2^{-i-2+k}$ of this operator. So, it satisfies $AA_{\alpha (2+i)}$ by remark \ref{nt4}. 

Now by induction we obtain item \ref{b} and \ref{d} for $L_2$. The operator $L_1$ satisfies $A_{\alpha}$ by lemma \ref{lemma9} and corollary \ref{corol1}.  
\end{proof}

\subsubsection{Quasi elliptic rings of commuting operators}

Motivated by this lemma and by lemma \ref{lemma5} we'll give the following definitions:
\begin{defin}
\label{elliptic}
The ring $B\subset \hat{E}_+$ of commuting operators is called quasi elliptic if it contains two monic operators $P,Q$ such that $\ord_{\Gamma} (P)= (0,k)$ (see definition \ref{defin3}) and $\ord_{\Gamma} (L)=(1,l)$ for some $k,l\in \dz$.  

The ring $B$ is called $\alpha$-quasi elliptic if $P,Q$ satisfy the condition $A_{\alpha}$. 
\end{defin}

\begin{defin}
\label{defin4,5}
We say that commuting monic operators $P,Q\in \hat{E}_+$ with $\ord_{\Gamma} (P)=(0,k)$, $\ord_{\Gamma} (Q)= (1,l)$  are almost normalized if 
$$P=\partial_2^k+ \sum_{s=-\infty}^{k-1}p_{s}\partial_2^{s} \mbox{\quad} Q=\partial_1\partial_2^l+ \sum_{s=-\infty}^{l-1}q_{s}\partial_2^{s},$$ where $p_s,q_s\in \hat{D}_1$.  

We say that $P,Q$ are  normalized if 
$$P=\partial_2^k+ \sum_{s=-\infty}^{k-2}p_{s}\partial_2^{s} \mbox{\quad} Q=\partial_1\partial_2^l+ \sum_{s=-\infty}^{l-1}q_{s}\partial_2^{s},$$ where $p_s,q_s\in \hat{D}_1$.  
\end{defin}

\begin{lemma}
\label{lemma7}
For any two commuting monic operators $P,Q\in \hat{D}$ with $\ord_{\Gamma} (P)=(0,k)$, $\ord_{\Gamma} (Q)= (1,l)$ we have
\begin{enumerate}
\item\label{1*}
\begin{enumerate}
\item\label{1a}
There exists an invertible function $f\in k[[x_1,x_2]]$ such that the operators $f^{-1}Pf, f^{-1}Qf$ will be almost normalized.
\item\label{1b}
There exists an operator $S=f+S^-$, where $S^-\in \hat{D}_1\partial_1\subset \hat{E}_+$ and invertible $f\in k[[x_1,x_2]]$, such that the operators $S^{-1}PS, S^{-1}QS$ will be normalized. 
\item\label{2*}
If $S_1$ is another operator with such a property, then $S^{-1}S_1\in k$. 
\end{enumerate}
\item\label{3*}
\begin{enumerate}
\item\label{3a}
If $P,Q$ satisfy the condition $A_{\alpha}$, then the almost normalized operators in \ref{1a} also satisfy $A_{\alpha}$. 
\item\label{3b}
If $P,Q$ satisfy the condition $A_{\alpha}$ with $\alpha = 1$, then  $S$ in \ref{1b} satisfies the condition $A_{\alpha }$. 
In this case the normalized operators in \ref{1b} also satisfy $A_{\alpha }$. 
\end{enumerate}
\end{enumerate}
\end{lemma}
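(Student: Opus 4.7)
The strategy is a two-stage conjugation. For Part~\ref{1a}, decompose the leading $\partial_2^l$ coefficient of $Q$ as $\partial_1+b(x_1,x_2)$ (since $Q$ is monic of $\Gamma$-order $(1,l)$) and solve $\partial_1(\log f)=-b$ by taking $f=\exp\!\left(-\int_0^{x_1}b(t,x_2)\,dt\right)\in k[[x_1,x_2]]^{\times}$. Then $f^{-1}(\partial_1+b)f=\partial_1$, and conjugation by a function preserves the leading $\partial_2^k$ coefficient of $P$, so $f^{-1}Pf$ and $f^{-1}Qf$ are both almost normalized.

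For Part~\ref{1b}, observe that for an almost normalized pair the commutator identity $[P,Q]=0$ at level $\partial_2^{k+l-1}$ gives $[\partial_1,p_{k-1}]=0$; indeed, the only relevant contributions to that coefficient are $p_{k-1}\partial_1-\partial_1 p_{k-1}=-[\partial_1,p_{k-1}]$. Hence $p_{k-1}\in\hat{D}_1$ depends only on $x_2$ and $\partial_1$. Set $u=-\tfrac{1}{k}\int_0^{x_2}p_{k-1}(x_2')\,dx_2'$, an element of $(x_2)\hat{D}_1\subset M\hat{D}_1$ which commutes with $\partial_1$. Because all powers $u^n$ have coefficients in $(x_2)^n k[[x_2]]$, the exponential $e^{u}$ converges in $\hat{D}_1$. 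Put $S:=f\cdot e^{u}$; the BCH expansion
\[
e^{-u}(f^{-1}Pf)e^{u}=(f^{-1}Pf)+[f^{-1}Pf,u]+\tfrac{1}{2}[[f^{-1}Pf,u],u]+\cdots
\]
has each successive commutator of lower $\partial_2$-order, and the $\partial_2^{k-1}$ coefficient of the first correction is $k\partial_2(u)+[p_{k-1},u]=-p_{k-1}$ (using $[p_{k-1},u]=0$), exactly killing the subleading term of $P$. Since $u$ commutes with $\partial_1$, the leading $\partial_1\partial_2^l$ coefficient of $Q$ is untouched, and writing $S=f+f(e^{u}-1)$ exhibits the required decomposition.

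For Part~\ref{2*}, a second solution $S_1$ yields $T=S^{-1}S_1$ commuting with both normalized operators $\tilde P$, $\tilde Q$. Expanding $T$ in $\Gamma$-order and using $[\tilde P,T]=0$, $[\tilde Q,T]=0$ level-by-level (as in the proof of Theorem~\ref{theo1}) forces $T\in k$. For Part~\ref{3*}: in (\ref{3a}) conjugation by an invertible function has $\Gamma$-order $(0,0)$ and preserves the $A_\alpha$, strong $A_\alpha$, and super-strong $A_\alpha$ conditions by Lemma~\ref{lemma9} and its strong analogues. For (\ref{3b}) with $\alpha=1$, integrating $p_{k-1}$ in $x_2$ preserves the $AA$-type bounds of Definitions~\ref{defin5}--\ref{defin5.2} (integration in $x_2$ only improves $M$-adic orders), and Lemma~\ref{lemma10} together with Corollary~\ref{corol1} then show that $e^{u}$, $e^{-u}$, and hence $S$, $S^{-1}$, satisfy (super-)strong $A_1$; the finite BCH expansion of $S^{-1}PS$ and $S^{-1}QS$ then keeps us inside $A_1$ by Lemma~\ref{lemma9}.

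The main obstacle is establishing the commutator identity $[\partial_1,p_{k-1}]=0$, which is the exact ingredient that makes exponential conjugation converge inside $\hat{D}_1$ and commute with $\partial_1$ (so as not to disturb the normalization of $Q$); without this step one cannot close the argument. The book-keeping of the super-strong $A_1$ condition through the BCH series in Part~\ref{3b} is the main technical calculation.
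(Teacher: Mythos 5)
Your proof follows essentially the same route as the paper: conjugation by $f=\exp(-\int b\,dx_1)$ for almost-normalization, extraction of $\partial_1(p_{k-1})=0$ from the $\partial_2^{k+l-1}$-coefficient of $[P,Q]$, and conjugation by $\exp(-\tfrac1k\int p_{k-1}\,dx_2)$ for normalization, with the $A_\alpha$ bookkeeping done via Lemma \ref{lemma10}, Lemma \ref{lemma9} and Corollary \ref{corol1} exactly as in the paper (the paper merely splits your single exponential into a preliminary conjugation by a function of $x_2$ that kills the $\partial_1^0$-part of $p_{k-1}$, followed by the exponential of the remainder). Two small repairs are needed. First, $f(e^{u}-1)$ need not lie in $\hat{D}_1\partial_1$ when $u$ has a nonzero $\partial_1^0$-component $u_0$; write $fe^{u}=(fe^{u_0})\,e^{u-u_0}$ instead, so that the invertible function is $fe^{u_0}$ and $S^-=(fe^{u_0})(e^{u-u_0}-1)\in\hat{D}_1\partial_1$. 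Second, in part \ref{2*} the operator $T=S^{-1}S_1$ does not a priori commute with the normalized operators: it intertwines two possibly different normalized pairs, $T\,(S_1^{-1}PS_1)=(S^{-1}PS)\,T$. Comparing the $\partial_2^{k-1}$-coefficients of the two sides (and the $\partial_2^{l}$-coefficients of the analogous relation for $Q$) still gives $\partial_2(T)=0$ and $\partial_1(T)=0$, hence $T$ has constant coefficients and, being invertible in $\hat{D}_1$, lies in $k$; but the step as you state it is not justified. Finally, the successive commutators in your expansion of $e^{-u}\tilde{P}e^{u}$ do not drop in $\partial_2$-order at levels where the coefficients depend on $x_1$ (the expansion is neither finite nor order-decreasing term by term); the conjugation is nevertheless well defined because $e^{\pm u}\in\hat{D}_1$ and $\hat{D}$ is a ring, and only the first commutator contributes at level $k-1$, so your conclusion is unaffected.
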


\begin{proof} First let's show that there exists a function $f\in k[[x_1,x_2]]^*$ such that 
\begin{equation}
\label{vs2}
f^{-1}Pf=\partial_2^k+ \sum_{s=0}^{k-1}p'_{s}\partial_2^{s}, \mbox{\quad} f^{-1}Qf=\partial_1\partial_2^l+ \sum_{s=0}^{l-1}q'_{s}\partial_2^{s}.
\end{equation}
Let $Q=\sum_{s=0}^{l}q_{s}\partial_2^{s}$ and $q_l=\partial_1\partial_2^l+g$. Then easy direct computations show that for any function $f\in k[[x_1,x_2]]^*$ we have 
$$
f^{-1}Pf=\partial_2^k+ \sum_{s=0}^{k-1}p'_{s}\partial_2^{s}, \mbox{\quad} f^{-1}Qf=\partial_2^l(\partial_1+ f^{-1}\partial_1(f)+g)+ \sum_{s=0}^{l-1}q'_{s}\partial_2^{s}
$$
with some coefficients $p'_s,q'_s\in \hat{D}_1$. 
Hence, we can find a needed function in the form $f=\exp (-\int gdx_1)$. 

So, we have reduced the problem to the operators $P,Q$ that look like the right hand side in (\ref{vs2}). 
Analogously, we can find a function $f\in k[[x_2]]^*$ such that, starting with the operators $P,Q$ that look like the right hand side in (\ref{vs2}), we'll have 
\begin{equation}
\label{vs3}
f^{-1}Pf=\partial_2^k+ \sum_{s=0}^{k-1}p'_{s}\partial_2^{s}, \mbox{\quad} f^{-1}Qf=\partial_1\partial_2^l+ \sum_{s=0}^{l-1}q'_{s}\partial_2^{s}, 
\end{equation}
where the element $p'_{k-1}$ has no free term. 
Again, direct computations show that for any function $f\in k[[x_2]]^*$ we have 
$$
f^{-1}Pf=\partial_2^k+ \sum_{s=0}^{k-1}p'_{s}\partial_2^{s}, \mbox{\quad} f^{-1}Qf=\partial_2^l(\partial_1+ f^{-1}\partial_1(f)+g)+ \sum_{s=0}^{l-1}q'_{s}\partial_2^{s},
$$
where $p'_{k-1}=p_{k-1}+ kf^{-1}\partial_2(f)$ (note that $f$ commutes with $p_s$). Since $[P,Q]=0$, we must have $\partial_1(p_{k-1})=0$. Hence, we can find a needed function $f\in k[[x_2]]^*$. 

Note that any function $f\in k[[x_1,x_2]]^*$ that preserves two operators of the form (\ref{vs3}) must be a constant. It follows immediately from the formulae above. 

So, we have reduced the problem to the operators $P,Q$ that look like the right hand side in (\ref{vs3}). Let's show that there exists an operator $S= 1+S^-$, $S^-\in \hat{D}_1\partial_1$ such that 
\begin{equation}
\label{vs4}
S^{-1}PS=\partial_2^k+ \sum_{s=0}^{k-2}p'_{s}\partial_2^{s}, \mbox{\quad} S^{-1}QS=\partial_1\partial_2^l+ \sum_{s=0}^{l-1}q'_{s}\partial_2^{s}. 
\end{equation}
Since $\partial_1(p_{k-1})=0$, we may look for an operator $S$ such that $\partial_1(S)=0$. Direct computations (note that $S$ commutes with $p_{k-1}$) show that for such an operator we have
$$
S^{-1}PS=\partial_2^k+ (p_{k-1} + kS^{-1}\partial_2(S))\partial_2^{k-1}+ \sum_{s=0}^{k-2}p'_{s}\partial_2^{s}, \mbox{\quad} S^{-1}QS=\partial_1\partial_2^l+ \sum_{s=0}^{l-1}q'_{s}\partial_2^{s}. 
$$
Hence, we can find a needed operator in the form $S=\exp (-\int p_{k-1}/kdx_2)$. Since $p_{k-1}$ has no free term, $\partial_1(p_{k-1})=0$, and there is $(-\int p_{k-1}/kdx_2)$ with $\ord_{M_2}(-\int p_{k-1}/kdx_2)>0$, this exponent is well defined, and $S\in \hat{D}_1$. 

Note that an operator $S$ that preserves normalized operators $P,Q$ must be an operator with constant coefficients. It follows easily from the calculations above. Since it is invertible, it must be a constant. 
Summing all together, we obtain the proof of items \ref{1*}, \ref{2*}.

The proof of \ref{3a} follows immediately from lemma \ref{lemma9}. 

To prove \ref{3b} let's note that, by remark \ref{nt4}, the coefficient $p_{k-1}$ satisfies $AA_{\alpha}$.  Hence, $(-\int p_{k-1}/kdx_2)$ above satisfies $AA_{\alpha -1}$. Since in our case $\alpha =1$, we obtain that $S$ satisfies $AA_{0}$ as a sum of operators satisfying $AA_{0}$, because $(-\int p_{k-1}/kdx_2)^s$ satisfies $AA_{0}$ by lemma \ref{lemma10}. It follows then that $S$ satisfies $A_{\alpha}$. 
The rest of the proof follows from lemma \ref{lemma9} and corollary \ref{corol1}. 
\end{proof} 

\begin{lemma}
\label{lemma8}
Let $L_1, L_2\in \hat{E}_+$ be commuting monic almost normalized operators with $\ord_{\Gamma} (L_2)=(0,1)$, $\ord_{\Gamma} (L_1)=(1,0)$:
$$L_1=\partial_1 + \sum_{q=1}^{\infty}v_q\partial_2^{-q}, \mbox{\quad } L_2= \partial_2 + \sum_{q=0}^{\infty}u_q\partial_2^{-q}.$$
 Then 
\begin{enumerate}
\item
\begin{enumerate}
\item\label{1}
There exists an operator $S=1+S^-$, where $S^-\in \hat{D}_1[[\partial_2^{-1}]]\partial_2^{-1}$ such that $S^{-1}\partial_1S=L_1$, $S^{-1}L_{20}S=L_2$, where $L_{20}=\partial_2+u_0$. 
\item\label{2}
If $S_1$ is another operator with such a property, then $S^{-1}S_1\in k [\partial_1]((L_{20}^{-1}))$. 
\end{enumerate}
\item\label{3}
If $L_1, L_2\in \hat{E}_+\cap E$, then $S\in \hat{E}_+\cap E$.
\item\label{4}
\begin{enumerate}
\item
If $L_1,L_2$ satisfy the condition $A_{\alpha}$ with $\alpha \ge 1$, then there exists $S$ satisfying the condition $A_{2\alpha -1}$; in particular, if $\alpha =1$, then $S$ satisfies $A_{\alpha}$. 
\item 
If $S_1$ is another operator with such a property, then $S^{-1}S_1\in k [\partial_1]((L_{20}^{-1}))$ and satisfies $A_{2\alpha -1}$. 
\end{enumerate}
\end{enumerate}
\end{lemma}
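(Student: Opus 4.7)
The plan is to construct $S$ coefficient by coefficient by an inductive procedure, analogous to the proof of Theorem~\ref{theo1} and to the dressing step in Lemma~\ref{lemma6}. I write $S = 1 + \sum_{q \ge 1} s_q \partial_2^{-q}$ with unknowns $s_q \in \hat{D}_1$ and rewrite the two desired identities as $\partial_1 S = S L_1$ and $L_{20} S = S L_2$. Extracting the coefficient at $\partial_2^{-n}$ in each of these produces, for every $n \ge 1$, a pair of equations
\begin{align*}
\partial_1(s_n) &= v_n + G_n(s_1,\dots,s_{n-1}),\\
\partial_2(s_n) + [u_0, s_n] &= u_n + F_n(s_1,\dots,s_{n-1}),
\end{align*}
where $G_n, F_n$ are explicit universal polynomial expressions in the previously constructed $s_k$ and in the given $v_q, u_q$; I will refer to these as (E1) and (E2).

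I will solve (E1) first by formal integration in $x_1$: the map $\partial_1\colon \hat{D}_1 \to \hat{D}_1$ is surjective, with kernel consisting of those operators whose coefficients (as power series in $x_1,x_2$) are independent of $x_1$. Thus (E1) admits a particular solution $s_n^{(0)}$, and its general solution is $s_n = s_n^{(0)} + c_n$ with $c_n$ in this kernel. Substituting into (E2) yields an equation $(\partial_2 + \operatorname{ad} u_0)(c_n) = H_n$ for an explicit $H_n$. Since on the subring of $\partial_1$-independent elements the operator $\partial_2 + \operatorname{ad} u_0$ reduces to a first-order differential operator in $x_2$ which is surjective with an explicit kernel, one can solve for $c_n$ provided $H_n$ itself is $\partial_1$-independent. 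A preliminary observation needed here is that $[\partial_1, L_{20}] = 0$, which I extract from the $\partial_2^0$-coefficient of $[L_1, L_2] = 0$ (this forces $\partial_1(u_0) = 0$).

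The main obstacle, and the heart of the proof, is to verify that $H_n \in \ker\partial_1$ at every inductive step. This compatibility condition is to be checked by comparing $\partial_1(L_{20} S^{(n-1)} - S^{(n-1)} L_2)$ with $L_{20}(\partial_1 S^{(n-1)} - S^{(n-1)} L_1)$ at level $n$, using the inductive hypothesis together with $[L_1, L_2] = 0$ and $[\partial_1, L_{20}] = 0$ to cancel everything but the crucial term. Once existence is established, the uniqueness in (\ref{2}) follows by a direct argument: if $S_1$ is another such operator, then $T := S^{-1} S_1$ lies in $1 + \hat{D}_1[[\partial_2^{-1}]]\partial_2^{-1}$ and commutes with both $\partial_1$ and $L_{20}$, whence coefficient-by-coefficient analysis (using that a $\partial_1$-commuting element of $\hat{D}_1$ lies in $k[[x_2]][\partial_1]$, and then imposing commutativity with $L_{20}$) identifies the common centralizer with $k[\partial_1]((L_{20}^{-1}))$. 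Item (\ref{3}) is immediate from the recursion: if $L_1, L_2 \in E$, no $M$-adic completion in $\partial_1$ is introduced at any step, so $S \in \hat{E}_+ \cap E$. For item (\ref{4}), the growth conditions are propagated through the recursion via Lemmas~\ref{lemma9} and~\ref{lemma10} and Corollary~\ref{corol1}, with careful order tracking that loses one factor of $\alpha$ when trading $\partial_1$-integration for $M$-adic order bounds; this is exactly the origin of the shift from $\alpha$ to $2\alpha - 1$, and it specializes to $A_\alpha$ in the tight case $\alpha = 1$.
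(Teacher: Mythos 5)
Your plan is essentially the paper's own proof: the paper also reduces to the pair of equations $\partial_1(s_k)=-v_k$, $\partial_2(s_k)+[u_0,s_k]=-u_k$, obtains the compatibility $\partial_2(v_k)-\partial_1(u_k)+[u_0,v_k]=0$ and $\partial_1(u_0)=0$ from $[L_1,L_2]=0$, proves uniqueness in item (\ref{2}) by the same centralizer argument, and tracks the $AA$-conditions through the explicit integral formula for $s_k$ to get the $2\alpha-1$ bound. The only (cosmetic) difference is that the paper organizes the construction as an infinite product of elementary dressings $S_k=1+s_k\partial_2^{-k}$ rather than solving for all coefficients of $S$ at once.
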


\begin{proof} \ref{1}. It suffices to prove the following fact: if 
$$L_1=\partial_1 + \sum_{q=k}^{\infty}v_q\partial_2^{-q}, \mbox{\quad } L_2= \partial_2 + u_0+ \sum_{q=k}^{\infty}u_q\partial_2^{-q},\mbox{\quad } [L_1,L_2]=0,$$ then there exists an operator $S_k=1+s_k\partial_2^{-k}$ such that 
$$S_k^{-1}L_1S_k=\partial_1 + \sum_{q=k+1}^{\infty}v'_q\partial_2^{-q},\mbox{\quad } S_k^{-1}L_2S_k=\partial_2 +u_0+ \sum_{q=k+1}^{\infty}u'_q\partial_2^{-q}.$$

Indeed, if this fact is proved, then $S^{-1}=\prod_{q=1}^{\infty}S_k$, where $S_1$ is taken for given $L_1,L_2$, $S_2$ is taken for $S_1^{-1}L_1S_1, S_1^{-1}L_2S_1$, and so on.  

To prove the fact let's note first that, since $[L_1,L_2]=0$, it follows $\partial_2(v_k)- \partial_1(u_k)+[u_0,v_k]=0$ and $\partial_1(u_0)=0$. After that,
$$
S_k^{-1}\partial_1S_k=\partial_1 +S_k^{-1}\partial_1(S_k )=\partial_1 + \partial_1(s_k)\partial_2^{-k}+\ldots , 
$$
$$ 
S_k^{-1}L_{20}S_k=\partial_2 +S_k^{-1}\partial_2(S_k )+S_k^{-1}u_{0}S_k =\partial_2 + (\partial_2(s_k)+[u_0,s_k])\partial_2^{-k}+\ldots ,
$$
whence $s_k$ can be found from the following system:
\begin{equation}
\label{sv2}
\partial_1(s_k)=-v_k \mbox{\quad} \partial_2(s_k)+[u_0,s_k]=-u_k. 
\end{equation}
This system is solvable, because  $\partial_2(v_k)- \partial_1(u_k)+[u_0,v_k]=0$ and $\partial_1(u_0)=0$ and all coefficients of $u_k,v_k$ belong to $k[[x_1,x_2]]$. 

\ref{2}. If $S_1$ is another operator with such a property, then we must have $[S^{-1}S_1,\partial_1]=0$, $[S^{-1}S_1,L_{20}]=0$.
Note that any element in $\hat{E}_+$ can be rewritten as a series in the ring $\hat{D}_1((L_{20}^{-1}))$. So, we'll assume that $S^{-1}S_1$ is rewritten in such a way. Since $[\partial_1, L_{20}]=0$, the first condition gives $\partial_1(S^{-1}S_1)=0$, i.e. the coefficients of $S^{-1}S_1$ don't depend on $x_1$. 

Now let $S^{-1}S_1=\sum_{q=0}^{\infty} s_qL_{20}^{-q}$ and assume that $s_k$ is a first coefficient such that $[s_k,L_{20}]\neq 0$. Then we have
$$
0=[S^{-1}S_1, L_{20}]=[s_k,L_{20}]L_{20}^{-k}+ \mbox{higher order terms},
$$
whence $[s_k,L_{20}]=0$, a contradiction. But $[s_k,L_{20}]=-\partial_2(s_k)$, because $\partial_1(s_k)=0$ and therefore $[s_k,u_0]=0$. So, we obtain that the  coefficients of $S^{-1}S_1$ don't depend on $x_2$. 

This means that the coefficients of $S^{-1}S_1$ must belong to $k$. Then from definition of the ring $\hat{E}_+$ it follows that $S^{-1}S_1\in k [\partial_1]((L_{20}^{-1}))$.

\ref{3}. The proof is the same as in \ref{1}.

\ref{4}. By corollary \ref{corol1}, the proof of item \ref{4} will follow from the proof of item \ref{1}, if we show that the operators $S_k$ satisfy the condition $A_{2\alpha -1}$. To prove this, we need to show that there is a solution $s_k$ of (\ref{sv2}) satisfying the condition $AA_{(2\alpha -1)k}$. But each solution of (\ref{sv2}) can be written in the form 
\begin{equation}
\label{s_k}
s_k=-\int v_kdx_1 +\int (\int \partial_2(v_k)dx_1 -u_k+[u_0,\int v_kdx_1])dx_2.
\end{equation}
We know that $u_k$ satisfy the condition $AA_{\alpha (1+k)}$ and $v_k$ satisfy the condition $AA_{\alpha k+1}$. So, there is integral $\int v_kdx_1$ satisfying $AA_{\alpha k}$. Then by lemma \ref{lemma10} $[u_0,\int v_kdx_1]$ satisfies $AA_{\alpha (k+1)}$. The term $\int \partial_2(v_k)dx_1$ will satisfy again $AA_{\alpha k+1}$. Since $\alpha (k+1)\ge \alpha k+1$, we obtain that the term $(\int \partial_2(v_k)dx_1 -u_k+[u_0,\int v_kdx_1])$ will satisfy $AA_{\alpha (k+1)}$. Then there is an integral $\int (\int \partial_2(v_k)dx_1 -u_k+[u_0,\int v_kdx_1])dx_2$ satisfying $AA_{\alpha (1+k)-1}$. Since $\alpha (1+k)-1\ge \alpha k$, we obtain that $s_k$ will satisfy  $AA_{\alpha (1+k)-1}$. But $(2\alpha -1)k\ge \alpha (1+k)-1$, hence there is $s_k$ satisfying $AA_{(2\alpha -1)k}$. 
\end{proof}

\section{Classification of subrings of commuting operators}

\subsection{Classification in terms of Schur pairs}
\label{reduction}

Now we are ready to describe a classification of certain rings of commuting operators. In fact, we can do it for all  $1$-quasi elliptic rings (see below). Let's show that many usual rings of commuting differential operators become $1$-quasi elliptic after a change of coordinates. 

Namely, consider a ring $B$ of commuting differential operators that contains two operators $P,Q$ with constant principal symbols satisfying the assumptions of proposition \ref{chvar}. The operators $P,Q$ satisfy the condition $A_{1}$ for order $(k,l)$ and order $(n,m)$ correspondingly, where $k+l= \Ord (P)$, 
$n+m= \Ord (Q)$. By lemma \ref{lemma5} we can find in $B$ (after an appropriate change of variables) two operators $P,Q$ of special type described in this lemma (we use here the same notation for $P,Q$ to point out that these operators satisfy conditions \ref{enu1.1} and \ref{enu1.3} of lemma \ref{lemma5}; we hope this will not lead to a confusion). In particular they satisfy the condition $A_{1}$, and the ring $B$ (after an appropriate change of variables) becomes $1$-quasi elliptic. Moreover, applying proposition \ref{chvar} we see that $B$ (after an appropriate change of variables) becomes strongly admissible. 

\bigskip

Consider now a $1$-quasi elliptic ring of commuting operators $B\subset \hat{D}$ (see definition \ref{elliptic}), and let $P,Q$ be monic operators from $B$ with $\ord_{\Gamma} (P)=(0,k)$, $\ord_{\Gamma} (Q)=(1,l)$. 
By lemma \ref{lemma6}, there exist unique operators $L_1,L_2$ such that $L_2^{k}=P$, $L_1L_2^{l-1}=Q$, and these operators satisfy the condition $A_{1}$.

By lemma \ref{lemma7}, \ref{3b} we can 
assume that they are normalized. Then by lemma \ref{lemma8}, there is an operator $S$ satisfying $A_{1}$, and $SL_1S^{-1}=\partial_1$, $SL_2S^{-1}=\partial_2$.

\begin{lemma}
\label{ochevidn}
Let $X$ be an operator commuting with $P,Q$. Then it commutes also with $L_1, L_2$. 
\end{lemma}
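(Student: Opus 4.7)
The strategy is to prove $[X,L_2]=0$ first, and then deduce $[X,L_1]=0$ from the factorization $Q=L_1L_2^l$.

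To show $[X,L_2]=0$, I would set $Y:=[X,L_2]\in\hat{E}_+$ and exploit the identity
$$
0\;=\;[X,L_2^k]\;=\;[X,P]\;=\;\sum_{i=0}^{k-1}L_2^i\,Y\,L_2^{k-1-i}
$$
combined with a highest-term analysis. If $Y\neq 0$, write $\HT(Y)=y_0\partial_1^a\partial_2^b$ with $y_0\ne 0$. Because $L_2$ is almost normalized with $\ord_{\Gamma}(L_2)=(0,1)$, one has $\HT(L_2^i)=\partial_2^i$, and the multiplicativity $\HT(AB)=\HT(A)\HT(B)$ (the same principle recorded in Section~2 and used repeatedly, e.g.\ in \ref{lemma9}) forces each summand $L_2^iYL_2^{k-1-i}$ to have the same highest term $y_0\partial_1^a\partial_2^{b+k-1}$. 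Summing the $k$ identical leading contributions would produce $k\,y_0\,\partial_1^a\partial_2^{b+k-1}$, which is nonzero since $k$ has characteristic zero; this contradicts the vanishing of the sum. Hence $Y=0$.

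Once $[X,L_2]=0$ is in hand, I would expand $0=[X,Q]=[X,L_1L_2^l]=[X,L_1]L_2^l+L_1[X,L_2^l]$; since $[X,L_2^l]=0$, this reduces to $[X,L_1]L_2^l=0$. By \ref{lemma6} (and the remark preceding it on invertibility in $\hat{E}_+$), $L_2$ is invertible in $\hat{E}_+$ because its leading coefficient equals $1\in R^{\times}$, so $L_2^l$ is invertible as well; multiplying on the right by $L_2^{-l}$ yields $[X,L_1]=0$.

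The only subtle point—and hence the main obstacle—is justifying the uniform highest-term formula $\HT(L_2^iYL_2^{k-1-i})=y_0\partial_1^a\partial_2^{b+k-1}$ inside $\hat{E}_+$; everything else is routine associative algebra using invertibility of $L_2$. (If $l<0$, one interprets $L_2^l$ as the appropriate power of $L_2^{-1}\in\hat{E}_+$, and the argument is unchanged.)
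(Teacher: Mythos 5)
Your proof is essentially the paper's own argument: expand $0=[X,P]=\sum_{i}L_2^i[X,L_2]L_2^{k-1-i}$, extract the leading term to get a factor of $k$ times the leading coefficient of $[X,L_2]$ (nonzero in characteristic zero), and then kill $[X,L_1]$ by right-cancelling the invertible $L_2^{l}$ in $0=[X,Q]=[X,L_1]L_2^{l}$. The one point you flag as an obstacle is resolved in the paper by \emph{not} using the full $\Gamma$-bigraded highest term $y_0\partial_1^a\partial_2^b$ (which need not exist for a general element of $\hat{E}_+=\hat{D}_1((\partial_2^{-1}))$, since the top $\partial_2$-coefficient is an infinite series in $\partial_1$), but only the highest term with respect to $\partial_2$, with coefficient $y\in\hat{D}_1$: since $L_2$ is monic with top $\partial_2$-coefficient $1$, each summand contributes the same top coefficient $y$ at $\partial_2^{b+k-1}$, giving $ky\neq 0$, and the argument closes exactly as you intend.
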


\begin{proof} We have 
$$
0=[P,X]=\sum_{q=0}^{k-1}L_2^q[L_2,X]L_2^{k-1-q},
$$
and $\HT (L_2^q)=\partial_2^q$. If $[L_2,X]\neq 0$, then $\HT ([L_2,X])\neq 0$ (here it suffice to consider the highest term of an operator in $\hat{D}_1((\partial_2^{-1}))=\hat{E}_+$ with respect to $\partial_2$), whence $\HT ([P,X])=k\HT ([L_2,X])\partial_2^{k-1}\neq 0$, a contradiction. So, $[L_2,X]=0$. Then also $[L_1,X]=0$, because $0=[Q,X]=[L_1,X]L_2^{l-1}$.  
\end{proof}
\begin{corol}{(cf. prop. \ref{techn5.2})}
\label{corol3.1}
The set of commuting with $P,Q$ operators is a commutative ring. Moreover, all these operators belong to the ring $\Pi_1$ (see corollary \ref{corol1.5}). 
\end{corol}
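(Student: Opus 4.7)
The plan is to reduce both assertions to properties of $\tilde X := SXS^{-1}$, where $S$ is the conjugating operator of Lemma~\ref{lemma8}. First I would observe that by Lemma~\ref{ochevidn}, any $X$ commuting with $P,Q$ also commutes with $L_1,L_2$, so $\tilde X$ commutes with $SL_1S^{-1}=\partial_1$ and $SL_2S^{-1}=\partial_2$ in $\hat E_+$. Expanding $\tilde X$ formally as $\sum \tilde x_{ij}\partial_1^i\partial_2^j$ with $\tilde x_{ij}\in k[[x_1,x_2]]$, the commutation relations $[\partial_1,\tilde X]=[\partial_2,\tilde X]=0$ force $\partial_1(\tilde x_{ij})=\partial_2(\tilde x_{ij})=0$, so each $\tilde x_{ij}$ lies in $k$. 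The growth condition defining $\hat D_1$ applied to coefficients in $k$ further forces each $\partial_2^j$-coefficient of $\tilde X$ to be a polynomial in $\partial_1$, so $\tilde X\in k[\partial_1]((\partial_2^{-1}))$.

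For commutativity, I would then note that $k[\partial_1]((\partial_2^{-1}))$ is commutative since $\partial_1$ and $\partial_2$ commute. For two operators $X,Y$ commuting with $P,Q$, the corresponding $\tilde X,\tilde Y$ both lie in this ring, so $\tilde X\tilde Y=\tilde Y\tilde X$, and conjugating back by $S^{-1}(-)S$ yields $XY=YX$.

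For the $\Pi_1$ inclusion, by the last part of Lemma~\ref{lemma8} the operator $S$ satisfies $A_1$, and by Corollary~\ref{corol1} so does $S^{-1}$. Since $\Pi_1$ is a subring of $\hat E_+$ by Corollary~\ref{corol1.5}, it suffices to prove $\tilde X\in\Pi_1$. For an operator with constant coefficients, the condition $A_1$ for some $(k,l)$ reduces to the combinatorial requirement that the $\Gamma$-support of $\tilde X$ lie in the triangle $\{(i,j):i+j\le k+l\}$. The hard part will be proving this triangle bound: by multiplicativity of $\HT$ and $\HT(S)=\HT(S^{-1})=1$, one has $\ord_\Gamma(\tilde X)=\ord_\Gamma(X)=(a,L)$, pinning down the positive-$\partial_2$-part of $\tilde X$. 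To control $\tilde X_j$ for $j<0$, I would use the constraint $X=S^{-1}\tilde X S\in\hat D$, i.e.\ $X_j=0$ for $j<0$, which recursively expresses $\tilde X_j$ in terms of $\tilde X_{j'}$ with $j'>j$ and the coefficients of $S,S^{-1}$; an induction on $-j$ using the $A_1$-growth of $S$ and $S^{-1}$ together with Lemma~\ref{lemma9} should yield the required bound on $\deg_{\partial_1}\tilde X_j$ as $j\to-\infty$, completing the proof.
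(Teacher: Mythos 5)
Your treatment of the commutativity claim coincides with the paper's: conjugation by $S$ turns $X$ into a constant-coefficient operator in $k[\partial_1]((\partial_2^{-1}))$ (the passage from ``constant coefficients in $\hat D_1$'' to ``polynomial in $\partial_1$'' is exactly right), and two such operators commute. For the inclusion in $\Pi_1$, however, the paper takes a different and shorter route: it passes to the subspace $W_0S^{-1}$, which is preserved by $SXS^{-1}$ and which, because $S^{-1}$ satisfies $A_1$ (corollary \ref{corol1}), admits the canonical basis $\{w_{i,j}\}$ of theorem \ref{theo2} with every $w_{i,j}$ satisfying $A_1$; then $w_{0,0}(SXS^{-1})$ is a \emph{finite} linear combination of the $w_{i,j}$, hence lies in $\Pi_1$, and one removes the factor $w_{0,0}$ using corollary \ref{corol1} and lemma \ref{lemma9}. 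Your operator-level recursion from $X=S^{-1}\tilde XS\in\hat D$ is a legitimate alternative in principle, but as written it has a gap.

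The gap is in the sentence claiming that $\ord_\Gamma(\tilde X)=\ord_\Gamma(X)=(a,L)$ ``pins down the positive-$\partial_2$-part of $\tilde X$''. It does not: it identifies only the single highest term and forces $\tilde X_j=0$ for $j>L$; the coefficients $\tilde X_j\in k[\partial_1]$ with $0\le j\le L$ are not constrained by it, and they are also invisible to your recursion, since the relation $X_j=0$ is available only for $j<0$. The repair is simple but must be made explicit: each of the finitely many $\tilde X_j$ with $0\le j\le L$ is a polynomial of some finite degree $d_j$, so one chooses $k\ge\max_{0\le j\le L}\bigl(d_j-(L-j)\bigr)$ and proves the condition $A_1$ for the pair $(k,L)$ rather than for $(a,L)$ (membership in $\Pi_1$ only requires \emph{some} pair). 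The downward induction on $j<0$ then closes as you intend, using lemma \ref{lemma10} for the $AA$-orders of the products of coefficients of $S$, $S^{-1}$ and the already-controlled $\tilde X_{j'}$, $j'>j$, together with the observation that a constant-coefficient element of $\hat D_1$ satisfying $AA_f$ has $\partial_1$-degree at most $f$. With these two points supplied your argument is complete, though noticeably longer than the paper's.
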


\begin{proof} Indeed, if $X$ commutes with $P,Q$, then it commutes with $L_1, L_2$ and therefore $SXS^{-1}$ commutes with $\partial_1,\partial_2$, where from $SXS^{-1}$ is an operator with constant coefficients. Therefore, any two operators commuting with $P,Q$ must commute with each other.  

To prove the second statement consider the space $W_0S^{-1}$, where $W_0=\langle z_1^{-i}z_2^{-j}\mbox{\quad }| i,j\ge 0 \rangle$. Since $S$ satisfies $A_{1}$, we have by corollary \ref{corol1} that $S^{-1}$ satisfies $A_{1}$, and by definition of the action, that the element $z_1^{-k}z_2^{-l}S^{-1}$ also satisfies  $A_{1}$ for any $k,l\ge 0$. Note also that $(W_0S^{-1})(SXS^{-1})\subset (W_0S^{-1})$. Since $\Sup (W_0S^{-1})=\Sup (W_0)$, there is the unique basis $\{w_{i,j}, i,j\ge 0\}$ in $W_0S^{-1}$ with the property $w_{i,j}=z_1^{-i}z_2^{-j}+w_{i,j}^-$, where $w_{i,j}^-\in k[z_1^{-1}][[z_2]]z_2$ and all elements $w_{i,j}$ satisfy the condition $A_{1}$. Therefore the operator $w_{0,0}(SXS^{-1})$ is a finite sum of $w_{i,j}$. So, it belongs to $\Pi_1$ (cf. the proof of corollary \ref{corol1.5}) and therefore $SXS^{-1}\in \Pi_1$ by lemma \ref{lemma9}. 
\end{proof}

So, starting from a $1$-quasi elliptic ring $B$ we obtain a ring of operators with constant coefficients $A=SBS^{-1}\in \Pi_1$ and the space $W=W_0S^{-1}$, $WA\subset W$, with special property. The converse is also true:

\begin{theo}
\label{theo2}
Let $W$ be a $k$-subspace 
$W\subset  k[z_1^{-1}]((z_2))$ 
with $\Sup (W)=W_0$.
Let $\{w_{i,j}, i,j\ge 0\}$ be the unique basis in $W$ with the property $w_{i,j}=z_1^{-i}z_2^{-j}+w_{i,j}^-$, where $w_{i,j}^-\in k[z_1^{-1}][[z_2]]z_2$. Assume that all elements $w_{i,j}$ satisfy the condition $A_{\alpha}$ with $\alpha\ge 1$. 

Then there exists a unique operator $S=1+S^-$ satisfying $A_{\alpha}$, where $S^-\in \hat{D}_1[[\partial_2^{-1}]]\partial_2^{-1}$, such that 
$W_0S=W $. 
\end{theo}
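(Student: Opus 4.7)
By Theorem \ref{theo1} there is already a unique $S=1+S^-$ with $S^-\in\hat D_1[[\partial_2^{-1}]]\partial_2^{-1}$ satisfying $W_0 S=W$, so the uniqueness half of Theorem \ref{theo2} is automatic: the condition $A_\alpha$ merely selects from among a single candidate. What must be shown is that this canonical $S$ actually satisfies $A_\alpha$. My plan is to verify $A_\alpha$ by re-running the slice-by-slice construction from the proof of Theorem \ref{theo1} and checking, by induction on $(k,l)\in\dz_{\ge 0}^2$, that the $(k,l)$-slice $T_{k,l}:=\partial_1^k\partial_2^l(S)(0)\in V$ has the support dictated by $A_\alpha$ on $S$ at $\ord_\Gamma=(0,0)$, namely
\[
\Sup(T_{k,l}) \subseteq \{z_1^{-i}z_2^m : m\ge 1,\ i\le k+l+\alpha m\}
\]
(augmented by the constant $1$ when $(k,l)=(0,0)$).

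The inductive construction expresses $T_{k,l}$ via
\[
T_{k,l} \;=\; z_1^{-k}z_2^{-l}S \;-\; \sum_{\substack{0\le a\le k,\ 0\le b\le l\\(a,b)\ne(k,l)}} \binom{k}{a}\binom{l}{b}\, z_1^{a-k}z_2^{b-l}\,T_{a,b},
\]
where $z_1^{-k}z_2^{-l}S\in W$ admits the basis expansion $w_{k,l}+\sum_{(i,j)\neq(k,l)}\lambda^{(k,l)}_{ij}w_{i,j}$, the scalars $\lambda^{(k,l)}_{ij}$ being forced by matching leading $\LT$-terms against the normalised form $w_{i,j}=z_1^{-i}z_2^{-j}+w_{i,j}^-$. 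The base case $(0,0)$ is immediate: $T_{0,0}=S(0)$ is of the form $1+w^-$ with $w^-\in k[z_1^{-1}][[z_2]]z_2$, so by uniqueness of the basis $T_{0,0}=w_{0,0}$, which satisfies $A_\alpha$ by hypothesis. For the inductive step I would track the support in the formula term by term: $w_{k,l}$ is controlled by its $A_\alpha$ condition; each $\lambda^{(k,l)}_{ij}w_{i,j}$ is controlled by the $A_\alpha$ condition on $w_{i,j}$; each shifted slice $z_1^{a-k}z_2^{b-l}T_{a,b}$ is controlled by translating the inductive bound for $T_{a,b}$. Lemmas \ref{lemma9}, \ref{lemma10} and corollary \ref{corol1.5} provide the algebraic machinery for combining these bounds.

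The main obstacle is the support bookkeeping. A naive application of the $A_\alpha$ bound to $w_{k,l}$ alone gives a support region strictly larger than what $T_{k,l}$ needs, the excess being of size $(\alpha-1)l$ in the $z_1^{-1}$-direction when $\alpha>1$, so the desired shape of $T_{k,l}$ only emerges after cancellation between $w_{k,l}$, the other basis terms $w_{i,j}$, and the subtracted shifted slices. The cleanest way to handle this, I expect, is to strengthen the inductive statement to $T_{k,l}\in\Pi_\alpha$ with a prescribed $\Gamma$-order and then exploit directly that $\Pi_\alpha$ is a subring of $\hat E_+$ (corollary \ref{corol1.5}) closed under the sums, products, and translations appearing in the formula, rather than comparing supports monomial by monomial. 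For $\alpha=1$ the raw bounds already line up and no cancellation is needed, and the general $\alpha\ge 1$ case follows the same template with this more careful accounting.
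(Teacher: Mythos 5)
Your proposal follows essentially the same route as the paper: existence and uniqueness are inherited from Theorem \ref{theo1}, and the condition $A_{\alpha}$ is then verified slice by slice via formula (\ref{eq1}) by induction on $(k,l)$, using that each $w_{i,j}$ satisfies $A_{\alpha}$. Your concern about the support bookkeeping for $\alpha>1$ is well taken and is in fact the one point where you are more careful than the paper: the paper's proof only asserts that the $(k,l)$-slice satisfies $A_{\alpha}$ \emph{for} $(k,l)$, whereas the bound you state, $i\le k+l+\alpha m$ (i.e.\ $A_{\alpha}$ for $(k+l,0)$, which is strictly stronger when $\alpha>1$), is what is actually needed to conclude that $S$ itself satisfies $A_{\alpha}$ after resumming the slices against $x_1^kx_2^l$; the discrepancy is invisible exactly in the case $\alpha=1$ that the rest of the paper uses.
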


\begin{proof} We can repeat the proof of theorem \ref{theo1} to show that in our situation $S$ satisfies $A_{\alpha}$. Note that $S$ satisfies $A_{\alpha}$ if every $(k,l)$-slice satisfies $A_{\alpha}$ for $(k,l)$. 

To show this we use induction on $(k,l)$. The $(0,0)$-slice is equal $w_{0,0}$, therefore it satisfies $A_{\alpha}$ for $(0,0)$. Assume that each $(p,q)$-slice with $p\le k$, $q\le l$ and $(p,q)\neq (k,l)$ satisfies $A_{\alpha}$ for $(p,q)$. Then from formula \ref{eq1} follows that the $(k,l)$-slice satisfies $A_{\alpha}$ for $(k,l)$, because each element $w_{i,j}$ satisfies $A_{\alpha}$ (cf. corollary \ref{corol1.5}). 
\end{proof}

\begin{corol}
\label{corol2}
Let $W$ be a subspace as in theorem. Let $A\subset  k[z_1^{-1}]((z_2))$ be a ring such that $W A\subset W$. Then we have an embedding $SAS^{-1}\subset \hat{D}$ (here we identify the ring $k[z_1^{-1}]((z_2))$ and $k[\partial_1]((\partial_2^{-1}))$, see definition \ref{action}). 
\end{corol}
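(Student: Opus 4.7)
The plan is to apply Proposition \ref{prop2}, which identifies $\hat{D}=\hat{D}_{1,1}$ inside $\hat{E}_+=\hat{E}_{1,1}$ as precisely those operators that preserve the right module $W_0=k[z_1^{-1},z_2^{-1}]$. So the whole task reduces to verifying $W_0(SaS^{-1})\subset W_0$ for every $a\in A$.

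First I would note that the operator $S$ produced by Theorem \ref{theo2} lies in $\hat{E}_+$ and, being of the form $1+S^-$ with $S^-\in \hat{D}_1[[\partial_2^{-1}]]\partial_2^{-1}$, is invertible in $\hat{E}_+$ via the geometric series $S^{-1}=1+\sum_{q\ge 1}(-S^-)^q$ (as already used in the proof of Theorem \ref{theo1}); by Corollary \ref{corol1} this $S^{-1}$ again satisfies $A_\alpha$, so in particular $S^{-1}\in \hat{E}_+$. The defining property $W_0S=W$ of $S$ may therefore be rewritten as $WS^{-1}=W_0$.

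Next, for an arbitrary $a\in A$, identified under the isomorphism of Definition \ref{action} with an element of $k[\partial_1]((\partial_2^{-1}))\subset E_+\subset \hat{E}_+$, the conjugate $SaS^{-1}$ lies in $\hat{E}_+$ as a product of three elements of $\hat{E}_+$. The right action of $\hat{E}_+$ on $W_{1,1}$ (and in particular on the subspace $W_0$) described at the end of subsection \ref{defin1} is associative, so
\[
W_0(SaS^{-1}) \;=\; (W_0S)\,a\,S^{-1} \;=\; W\,a\,S^{-1} \;\subset\; WS^{-1} \;=\; W_0,
\]
where the inclusion $Wa\subset W$ is exactly the hypothesis that $A$ stabilises $W$. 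Proposition \ref{prop2} now forces $SaS^{-1}\in \hat{D}$, and since conjugation by $S$ is a ring homomorphism that is injective on $E$ (indeed on all of $\hat{E}_+$), we obtain the claimed ring embedding $SAS^{-1}\subset \hat{D}$.

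I do not expect a serious obstacle here: the content of the corollary is just that the $A_\alpha$-hypothesis of Theorem \ref{theo2} has allowed us to produce an $S$ entirely inside $\hat{E}_+$, so that the tautological identity $W_0(SaS^{-1})\subset W_0$ can be interpreted in the completed ring and combined with Proposition \ref{prop2}. The only minor subtlety to check is that the associativity step $W_0(SaS^{-1})=(W_0S)aS^{-1}$ is legitimate; this holds because all three factors act as continuous right operators on $W_{1,1}\supset W_0$, which is precisely the setting in which the action of $\hat{E}_{1,1}$ was defined in subsection \ref{defin1}.
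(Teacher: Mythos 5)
Your proof is correct and follows essentially the same route as the paper, which simply observes $W_0\,SAS^{-1}\subset W_0$ and invokes Proposition \ref{prop2}; your version just spells out the chain $W_0(SaS^{-1})=(W_0S)aS^{-1}=WaS^{-1}\subset WS^{-1}=W_0$ and the invertibility of $S$ that the paper leaves implicit.
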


\begin{proof} Clearly, $W_0 SAS^{-1}\subset W_0$. Then by proposition \ref{prop2} $SAS^{-1}\in \hat{D}$.  
\end{proof}

Motivated by theorem \ref{theo2} and lemma \ref{lemma8} we'll give the following definitions:

\begin{defin}
\label{1space}
The subspace $W\subset  k[z_1^{-1}]((z_2))$ is called $\alpha$-space, if there exists a basis $w_i$ in $W$ such that $w_i$ satisfy the condition $A_{\alpha}$ for all $i$.
\end{defin}

\begin{defin}
\label{sch}
We say that a pair of subspaces $(A,W)$, where $A,W \subset  k[z_1^{-1}]((z_2))$ and $A$ is a $k$-algebra with unity such that $WA\subset W$, is a $\alpha$-Schur pair if $A\subset  \Pi_{\alpha}$ (see corol. \ref{corol1.5}) and $W$ is a $\alpha$-space. 

We say that $\alpha$-Schur pair is a $\alpha$-quasi elliptic Schur pair if $A$ is a $\alpha$-quasi elliptic ring (see def. \ref{elliptic}; we identify here the ring $k[z_1^{-1}]((z_2))$ with the ring $k[\partial_1]((\partial_2^{-1}))$ via $z_1 \mapsto \partial_1^{-1}$, $z_2\mapsto \partial_2^{-1}$). 
\end{defin}

\begin{defin}{(cf. \cite[def.1]{Zhe})}
\label{admissible}
An operator $T\in \hat{E}_+$ is said to be admissible if it is an invertible operator of order zero such that $T\partial_1 T^{-1}$, $T\partial_2T^{-1}\in k[\partial_1]((\partial_2^{-1}))$. The set of all admissible operators is denoted by $\Adm$ (for a classification of admissible operators see \cite[lemma 7]{Zhe}). 

An operator $T\in \hat{E}_+$ is said to be $\alpha$-admissible if it is admissible and 
satisfies the condition $A_{\alpha}$ (in this case by lemma \ref{lemma9} we have $T\partial_1 T^{-1}$, $T\partial_2T^{-1}\in \Pi_{\alpha}$). The set of all $\alpha$-admissible operators is denoted by $\Adm_{\alpha }$.

We say that two $\alpha$-Schur pairs $(A,W)$ and $(A',W')$ are equivalent if $A'=T^{-1}AT$ and $W'=WT$, where $T$ is an admissible operator. 
\end{defin}

\begin{defin}
\label{rings}
The commutative $\alpha$-quasi elliptic rings $B_1$, $B_2\subset \hat{D}$ are said to be equivalent if there is an invertible operator $S\in \hat{D}_1$ as in lemma \ref{lemma7} \ref{1b} such that $B_1=SB_2S^{-1}$. 
\end{defin}

Summing the arguments above together, we obtain:
\begin{theo}
\label{schurpair} 
There is a one to one correspondence between the classes of equivalent $1$-quasi elliptic Schur pairs $(A,W)$ from definition \ref{admissible} with $\Sup(W)=\langle z_1^{-i}z_2^{-j}\mbox{\quad }| i,j\ge 0 \rangle$ and the classes of equivalent  $1$-quasi elliptic rings (see definitions \ref{elliptic}, \ref{rings}) of commuting operators $B\subset \hat{D}$.    
\end{theo}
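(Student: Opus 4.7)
The plan is to build explicit constructions in both directions and then match the equivalences.

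For the forward map $B\mapsto (A,W)$: given a $1$-quasi elliptic ring $B\subset\hat D$, I would pick monic generators $P,Q\in B$ with $\ord_{\Gamma}(P)=(0,k)$ and $\ord_{\Gamma}(Q)=(1,l)$, which automatically satisfy $A_1$. Lemma~\ref{lemma7}(1b)+(3b) provides an invertible $S_0\in\hat D$ satisfying $A_1$ that conjugates $P,Q$ into normalized form; Lemma~\ref{lemma6}(1),(2) then extracts unique commuting $L_1,L_2\in\hat E_+$ satisfying $A_1$ with $L_2^k=S_0^{-1}PS_0$ and $L_1L_2^l=S_0^{-1}QS_0$. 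Normalization kills $u_0$, so $L_{20}=\partial_2$ in the notation of Lemma~\ref{lemma8}, and Lemma~\ref{lemma8}(1),(4) supplies an $A_1$-operator $S_1\in\hat E_+$ with $S_1L_iS_1^{-1}=\partial_i$. Setting $S:=S_1S_0^{-1}$, $A:=SBS^{-1}$, $W:=W_0S^{-1}$, one verifies: $A$ consists of constant-coefficient operators (Corollary~\ref{corol3.1}) and contains $\partial_2^k,\partial_1\partial_2^l$, so $A\subset\Pi_1$ is $1$-quasi elliptic; $W A\subset W$ follows from $W_0 B\subset W_0$ (Proposition~\ref{prop2}); and $\Sup(W)=W_0$ because $S^{-1}=1+(\mathrm{lower\ order})$.

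For the backward map $(A,W)\mapsto B$: given a $1$-quasi elliptic Schur pair with $\Sup(W)=W_0$, I would form the canonical basis $\{w_{i,j}\}$ of $W$ (elements of shape $z_1^{-i}z_2^{-j}+w_{i,j}^-$, $w_{i,j}^-\in k[z_1^{-1}][[z_2]]z_2$), which inherits the $A_1$ property from any $1$-basis since the change of basis is triangular and $\Pi_1$ is closed under linear combinations (Corollary~\ref{corol1.5}). Theorem~\ref{theo2} then yields a unique $A_1$-operator $S=1+S^-$ with $W_0S=W$; set $B:=S^{-1}AS$. Corollary~\ref{corol2} gives $B\subset\hat D$, and since $A$ carries operators of $\Gamma$-orders $(0,k)$ and $(1,l)$, Lemma~\ref{lemma9} shows $B$ does too, so $B$ is $1$-quasi elliptic. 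The round-trips are immediate: starting from $(A,W)$, running forward then backward recovers the same $S$ by uniqueness in Theorem~\ref{theo2}, hence the same $(A,W)$; starting from $B$, the composite gives $S^{-1}(SBS^{-1})S=B$.

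The substantive step is matching the two equivalence relations. Lemma~\ref{lemma7}(2*) pins the normalizing $S_0$ down to a nonzero scalar, and Lemma~\ref{lemma8}(2),(4) pins $S_1$ down to right multiplication by an element $T\in k[\partial_1]((\partial_2^{-1}))$ satisfying $A_1$. Since both $S_0$ and $S_1$ are $\Gamma$-order zero and the centralizer of $\{\partial_2^k,\partial_1\partial_2^l\}$ in $\hat E_+$ consists of constant-coefficient operators, any two admissible choices of $\tilde S:=S_1S_0^{-1}$ differ by left multiplication by an invertible element $T'\in k[\partial_1]((\partial_2^{-1}))$ of $\Gamma$-order $(0,0)$ satisfying $A_1$, i.e.\ by a $1$-admissible operator in the sense of Definition~\ref{admissible}; this translates to the equivalence $A'=T^{-1}AT$, $W'=WT$ with $T=(T')^{-1}$. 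Conversely, Definition~\ref{rings}'s ring equivalence is exactly the ambiguity left over in the backward construction, so the two maps descend to mutually inverse bijections on equivalence classes.

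The main obstacle is this final bookkeeping—confirming that the combined non-uniqueness from Lemmas~\ref{lemma7} and~\ref{lemma8} lands precisely in $\Adm_1$ (neither smaller nor larger) and is reflected bijectively by Definition~\ref{rings} on the ring side. A secondary technical point is verifying that the canonical basis of a $1$-space inherits $A_1$, which relies essentially on closure of $\Pi_{1}$ under linear combinations.
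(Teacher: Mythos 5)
Your proposal follows essentially the same route as the paper: the forward map via Lemmas \ref{lemma7}, \ref{lemma6}, \ref{lemma8} together with Corollary \ref{corol3.1} and Proposition \ref{prop2}, the backward map via Theorem \ref{theo2} and Corollary \ref{corol2}, and the same identification of the residual ambiguity with admissible operators. If anything, your bookkeeping of how the non-uniqueness in Lemmas \ref{lemma7}(2*) and \ref{lemma8}(2) assembles into the $\Adm_1$-equivalence is spelled out more explicitly than in the paper, which simply "sums the arguments together."
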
 

\begin{nt}
\label{schurpair1}
The pair $(A,W)$ is an analogue of the Schur pair, see \cite{Mu} and also \cite{ZO}. 

We have restricted ourself on the case of $1$-quasi elliptic rings in theorem \ref{schurpair} only because of lemma  \ref{lemma7}, \ref{3b} about possibility of normalization. The same is true if we replace words "$1$-quasi elliptic" by "quasi elliptic". The proof is the same.
\end{nt}

We finish this section with the following statement on "purity" of $1$-quasi elliptic subrings of partial differential operators:
\begin{prop}
\label{purity}
Let $B\subset D\subset \hat{D}$ be a $1$-quasi elliptic ring of commuting partial differential operators. Then any ring $B'\subset \hat{D}$ of commuting operators such that $B'\supset B$ is a ring of partial differential operators, i.e. $B'\subset D$.
\end{prop}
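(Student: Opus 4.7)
The plan is to conjugate any $X \in B'$ by an intertwining operator $S$ produced from the $1$-quasi elliptic generators of $B$, and to show that the conjugate $Y = SXS^{-1}$ must lie in the smaller ring $E_+ = R[\partial_1]((\partial_2^{-1}))$. Since $S$ itself will lie in $E_+$, this will give $X \in E_+$; combined with $X \in \hat{D}$ and the identification $E_+ \cap \hat{D} = D$, this will finish the proof.

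Let $P, Q \in B$ be the monic operators witnessing $1$-quasi ellipticity, with $\ord_{\Gamma}(P)=(0,k)$, $\ord_{\Gamma}(Q)=(1,l)$, both satisfying $A_1$. Since $P, Q \in D$, lemma \ref{lemma6} produces $L_1, L_2 \in \hat{E}_+ \cap E = E_+$ with $L_2^k = P$, $L_1 L_2^l = Q$, satisfying $A_1$. Lemma \ref{lemma8} then yields an operator $S$, satisfying $A_1$, with $SL_1 S^{-1} = \partial_1$ and $SL_2 S^{-1} = L_{20} = \partial_2 + u_0$ for some $u_0 \in k[[x_2]]$; moreover, since $L_1, L_2 \in E$, item (3) of that lemma ensures $S \in E_+$ as well. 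Given any $X \in B'$, the commutativity of $B'$ together with lemma \ref{ochevidn} gives $[X, L_1] = [X, L_2] = 0$, so $Y := SXS^{-1}$ commutes with $\partial_1$ and $L_{20}$; by corollary \ref{corol3.1} we have $X \in \Pi_1$, and hence also $Y \in \Pi_1$ by lemma \ref{lemma9} together with corollary \ref{corol1}.

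The heart of the argument is to show $Y \in E_+$. Expand $Y = \sum_{j \leq l_0,\, i \geq 0} c_{ij}\, \partial_1^i \partial_2^j$ with $\ord_{\Gamma}(Y) = (k_0, l_0)$. Commutation $[\partial_1, Y] = 0$ forces $c_{ij} \in k[[x_2]]$, and $[L_{20}, Y] = 0$ then yields the recursion
$$
c_{ij}'(x_2) = \sum_{s \geq 1} \binom{j+s}{s}\, c_{i,j+s}(x_2)\, u_0^{(s)}(x_2),
$$
in which the sum is finite because $c_{i, j+s} = 0$ for $j+s > l_0$. I will show, by induction on $l_0 - j$, that $c_{ij} = 0$ whenever $i > l_0 - j + k_0$. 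In the base case $j = l_0$ the right-hand side is empty, so $c_{i, l_0}$ is a constant in $k$; the $A_1$ bound $\ord_M(c_{i, l_0}) \geq i - k_0 \geq 1$ for $i > k_0$ forces it to vanish. In the inductive step the hypothesis makes the right-hand side vanish for $i > l_0 - j + k_0$, so $c_{ij}$ is again a constant in $k$, and the same $A_1$ bound kills it. Hence each coefficient of $\partial_2^j$ in $Y$ is a polynomial in $\partial_1$, so $Y \in R[\partial_1]((\partial_2^{-1})) = E_+$.

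Since $S, S^{-1}, Y$ all lie in the ring $E_+$, we obtain $X = S^{-1} Y S \in E_+$, and thus $X \in E_+ \cap \hat{D} = D$ as required. The main obstacle is that $u_0$ need not vanish: in a normalized situation one would have $L_{20} = \partial_2$, so $Y$ would have constant coefficients immediately, but normalizing $L_2$ requires conjugation by an exponential that lies in $\hat{D}_1$ but generally escapes $E_+$, so normalization cannot be carried out inside $E_+$. The $A_1$-plus-recursion induction above is precisely what replaces the unavailable normalization, using the $x_2$-order growth in the $A_1$ condition to force polynomial behavior in $\partial_1$.
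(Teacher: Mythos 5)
Your overall route coincides with the paper's: use lemma \ref{lemma6} and the item of lemma \ref{lemma8} asserting $S\in\hat{E}_+\cap E$ when $L_1,L_2\in\hat{E}_+\cap E$ to produce a conjugating operator $S$ lying in $E$, show that $Y=SXS^{-1}$ lands in $E_+=k[[x_1,x_2]][\partial_1]((\partial_2^{-1}))$ for every $X\in B'$, undo the conjugation inside $E$, and finish with $\hat{D}\cap E=D$. You are also right to flag that the normalization of lemma \ref{lemma7} is delicate here: the conjugation by $\exp(-\int p_{k-1}/k\,dx_2)$ is in general an infinite series in $\partial_1$ and escapes $E$, so one cannot simply say that $SXS^{-1}$ has constant coefficients with an $S\in E$; the paper's one-line proof glosses over this, and your instinct to work with $L_{20}=\partial_2+u_0$ instead of normalizing is sound.

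The gap is in your replacement argument. You assume $u_0\in k[[x_2]]$, but $u_0$ is the $\partial_2^0$-coefficient of $L_2=P^{1/k}$, namely $p_{k-1}/k$ where $p_{k-1}\in k[[x_1,x_2]][\partial_1]$ is the $\partial_2^{k-1}$-coefficient of $P$; commutativity only forces $\partial_1(p_{k-1})=0$, so in general $u_0\in k[[x_2]][\partial_1]$ is an operator of positive $\partial_1$-degree (a coefficient such as $x_2\partial_1^2$ at $\partial_2^{k-1}$ is compatible with $A_1$). Writing $u_0=\sum_m a_m(x_2)\partial_1^m$, the commutator $[u_0,Y]$ contributes the terms $\binom{j+s}{s}\,a_m^{(s)}\,c_{i-m,\,j+s}$ with $m\ge 0$, $s\ge 1$ to the equation for $c_{ij}'$, not only the $m=0$ terms you wrote. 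For the terms with $m>s$ (possible once $\deg_{\partial_1}u_0\ge 2$) the inductive hypothesis no longer forces the right-hand side to vanish in the range $i>l_0-j+k_0$; there the $A_1$ bound only gives $\ord_M(c_{ij})\ge 1$, which is compatible with $c_{ij}\neq 0$ once $c_{ij}$ is no longer constant, so the induction does not close. The repair requires no new idea: the uniqueness argument in lemma \ref{lemma8} (its second item) already shows that any element of $\hat{E}_+$ commuting with both $\partial_1$ and $L_{20}$ lies in $k[\partial_1]((L_{20}^{-1}))$, and since $u_0\in k[[x_2]][\partial_1]$ each power $L_{20}^{-q}$ lies in $E_+$ with $\partial_2$-order $-q$, whence $k[\partial_1]((L_{20}^{-1}))\subset E_+$ and $Y\in E_+$ follows directly. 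Finally, before invoking lemma \ref{lemma8} you should almost-normalize so that the $\partial_2^0$-coefficient of $L_1$ is exactly $\partial_1$; this is a conjugation by an invertible function of $k[[x_1,x_2]]^*$, which stays in $D$ and in $E$ and is harmless.
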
 

\begin{proof} If $B\subset D$, then by lemma \ref{lemma8}, item \ref{2} the operator $S$ such that $SBS^{-1}=A \subset k[\partial_1]((\partial_2^{-1}))$ belongs to $E$. Since $B'$ is $1$-quasi elliptic, we have also $SB'S^{-1} \subset k[\partial_1]((\partial_2^{-1}))\subset E$. Thus, $B'\subset \hat{D}\cap E= D$. 
\end{proof} 

\subsection{Correspondence between Schur pairs and geometric data}

Now we are going to establish a correspondence between certain $1$-quasi elliptic Schur pairs and geometric data from the generalized Krichever-Parshin correspondence, see \cite{Pa}, \cite{Os}, \cite{Ku} (in fact, we will modify this data, see definition \ref{geomdata} and remark \ref{remark3.5} below). We will consider not all $1$-quasi elliptic Schur pairs, but those which satisfy a condition of strong admissibility  (see definitions below). We emphasize that these pairs include in particular all pairs coming from rings of partial differential operators mentioned in the beginning of previous subsection. As a result, we will obtain a correspondence between $1$-quasi elliptic strongly admissible rings of commuting operators in $\hat{D}$ and geometric data. 

To reach this aim we will need the following "trick lemma". 

\begin{lemma}
\label{trick}
Let $W$ be a closed $k$-subspace $W\subset  k[z_1^{-1}]((z_2))$ 
with $\Sup(W)=\langle z_1^{-i}z_2^{-j}\mbox{\quad }| i,j\ge 0 \rangle$.
Let $\{w_{i,j}, i,j\ge 0\}$ be the unique basis in $W$ with the property $w_{i,j}=z_1^{-i}z_2^{-j}+w_{i,j}^-$, where $w_{i,j}^-\in k[z_1^{-1}][[z_2]]z_2$. Assume that all elements $w_{i,j}$ satisfy the condition $A_{\alpha}$ with $\alpha\ge 1$. 

Then there is an isomorphism  
$$\psi_{\alpha}:W\rightarrow W'$$ 
of $W$ with a closed $k$-subspace $W'\subset k[[u]]((t))$ with $\Sup(W')=\langle u^{i}t^{-j[\alpha ]-i}\mbox{\quad }| i,j\ge 0 \rangle$, where $[\alpha ]$ is the least integer greater or equal to $\alpha$. 
\end{lemma}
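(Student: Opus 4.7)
The plan is to define the explicit monomial map
\[
\psi_\alpha\colon z_1^{-p}z_2^{q}\longmapsto u^{p}\,t^{q[\alpha]-p}\qquad (p\ge 0,\ q\in\dz),
\]
extended $k$-linearly, and to show that the hypothesis $A_\alpha$ forces its restriction to $W$ to land in $k[[u]]((t))$ with the prescribed support. The first step is to translate the condition $A_\alpha$ for elements of $k[z_1^{-1}]((z_2))$ via the identification $z_\nu\leftrightarrow\partial_\nu^{-1}$: writing
\[
w_{i,j}=z_1^{-i}z_2^{-j}+\sum_{p\ge0,\;q\ge 1}a^{(i,j)}_{p,q}z_1^{-p}z_2^{q},\qquad a^{(i,j)}_{p,q}\in k,
\]
its $\Gamma$-order in the $\partial$-coordinates is $(i,j)$, and Definition~\ref{defin3} applied with scalar coefficients (positive $\ord_M$ of a scalar means zero) forces $a^{(i,j)}_{p,q}=0$ whenever $p>\alpha(q+j)+i$.

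Next I would verify that $\psi_\alpha(w_{i,j})\in k[[u]]((t))$. The assignment $(p,q)\mapsto(p,q[\alpha]-p)$ is injective, so each coefficient $[u^{p_0}t^{r_0}]$ receives at most one contribution, and every $u$-exponent is $\ge 0$. The crucial estimate
\[
q[\alpha]-p\ge q[\alpha]-\alpha(q+j)-i= q([\alpha]-\alpha)-\alpha j-i\ge -\alpha j-i\ge -j[\alpha]-i,
\]
derived from $A_\alpha$ and $[\alpha]\ge\alpha$, bounds the $t$-exponents below by that of the leading term.

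The key step is the leading-term comparison in the anti-lexicographic order on $\Gamma=\dz^2$: I claim the lowest term of $\psi_\alpha(w_{i,j})$ is $u^{i}t^{-j[\alpha]-i}$. For any $(p,q)$ with $q\ge 1$ in the support of $w_{i,j}^-$ one needs either $(q+j)[\alpha]>p-i$, or $(q+j)[\alpha]=p-i$ together with $p>i$. The bound $p-i\le\alpha(q+j)$ from $A_\alpha$ yields the first alternative whenever $\alpha\notin\dz$, using the slack $[\alpha]>\alpha$ and $q+j\ge 1$. When $\alpha\in\dz$ and the inequality $(q+j)[\alpha]\ge p-i$ is tight, $q\ge 1$ and $\alpha\ge 1$ force $p=\alpha(q+j)+i\ge\alpha+i>i$, so the second alternative applies.

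Setting $W':=\psi_\alpha(W)$, monomial-level injectivity of $\psi_\alpha$ and the coefficient-wise correspondence $(p,q)\leftrightarrow(p,q[\alpha]-p)$ make $\psi_\alpha|_W$ bicontinuous, so $W'$ is a closed subspace of $k[[u]]((t))$ whose basis $\{\psi_\alpha(w_{i,j})\}$ has lowest terms $u^{i}t^{-j[\alpha]-i}$, yielding $\Sup(W')=\langle u^{i}t^{-j[\alpha]-i}\mid i,j\ge 0\rangle$. The main obstacle I anticipate is exactly the borderline case $\alpha\in\dz$: the $A_\alpha$ inequality is tight precisely there, and one must carefully invoke the strict lower bound $q\ge 1$ coming from $w_{i,j}^-\in k[z_1^{-1}][[z_2]]z_2$ to upgrade a weak second-coordinate inequality into a strict first-coordinate inequality in anti-lex order.
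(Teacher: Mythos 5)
Your proposal is correct and follows essentially the same route as the paper: the paper's proof defines exactly the composition $z_1\mapsto u':=z_1^{-1}$, $z_2\mapsto t^{[\alpha]}$, $u'\mapsto u=u't$, i.e.\ the monomial map $z_1^{-p}z_2^{q}\mapsto u^{p}t^{q[\alpha]-p}$, and asserts that the $A_{\alpha}$ hypothesis makes the images well defined with the stated support. You have merely written out the verification (translation of $A_\alpha$ into the vanishing condition $a_{p,q}=0$ for $p>\alpha(q+j)+i$, the lower bound on $t$-exponents, and the anti-lexicographic leading-term comparison including the borderline integer-$\alpha$ case) that the paper leaves implicit.
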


\begin{proof} Let's consider the composition of maps $z_1\mapsto u':=z_1^{-1}$, $z_2\mapsto t^{[\alpha ]}$, and $u'\mapsto u=u't$. Due to the conditions of lemma, the images of the elements $w_{i,j}$ will be well defined elements of $k[[u]]((t))$, the composition of these maps is clearly a $k$-linear map which is an isomorphism of $W$ with a closed $k$-subspace $W'\subset k[[u]]((t))$ with described properties. We'll call this composition by $\psi_{\alpha}$.  
\end{proof}

\begin{corol}
\label{trick1}
Let $W$ be a closed $k$-subspace as in lemma and let $\alpha =1$. Then $W'$ in lemma has the property $\Sup(W')=\langle u^{i}t^{-j}\mbox{\quad }| i,j\ge 0, i-j\le 0 \rangle$. 

Moreover, in this case the isomorphism $\psi_1$ induces an isomorphism 
$$
\psi_1:k[z_1^{-1}]((z_2))\cap \Pi_1\rightarrow k[[u]]((t)).
$$
\end{corol}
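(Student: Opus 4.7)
The plan is to handle the two assertions in turn. The first, the formula for $\Sup(W')$, follows by specializing Lemma \ref{trick} to $\alpha = 1$, where $[\alpha] = 1$, so $\Sup(W') = \langle u^i t^{-j - i} : i, j \ge 0\rangle$. Re-indexing by $l = i+j$, which forces $l \ge i \ge 0$, rewrites this as $\{u^i t^{-l} : 0 \le i \le l\}$, equivalently $\langle u^i t^{-j} : i, j \ge 0,\ i - j \le 0\rangle$, as claimed.

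For the isomorphism, I would begin by unpacking the $\Pi_1$ condition for constant-coefficient elements. Under the identification $z_1^{-1} \leftrightarrow \partial_1$, $z_2 \leftrightarrow \partial_2^{-1}$, a monomial $z_1^{-a} z_2^b$ corresponds to $\partial_1^a \partial_2^{-b}$; and for $P = \sum c_{a,b} z_1^{-a} z_2^b$ with scalar $c_{a,b}$, the condition $A_1$ for some $(k,l)$ reduces (since $\ord_M$ of a nonzero scalar is zero) to the requirement $c_{a,b} = 0$ whenever $a - b > k + l$. Thus $k[z_1^{-1}]((z_2)) \cap \Pi_1$ is precisely the set of series with $a - b \le C$ for some constant $C = C(P)$. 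Now $\psi_1$ sends $z_1^{-a} z_2^b \mapsto u^a t^{b-a}$, so the $\Pi_1$ bound $b - a \ge -C$ yields a uniform lower bound on $t$-exponents; for each fixed $m = b - a$, the coefficient $\sum_{a \ge \max(0,\, M-m)} c_{a,\, a+m} u^a$ is a formal power series in $u$ (here $M$ is the lower bound on $b$ inherent to $k[z_1^{-1}]((z_2))$). Hence $\psi_1(P) \in (k[[u]])((t)) = k[[u]]((t))$. The ring-homomorphism property follows from the monomial identity $\psi_1(z_1^{-(a+a')} z_2^{b+b'}) = u^{a+a'} t^{(b+b')-(a+a')} = \psi_1(z_1^{-a} z_2^b)\, \psi_1(z_1^{-a'} z_2^{b'})$, extended to formal sums by the continuity arguments already used in Lemma \ref{trick}.

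For the reverse direction, the natural candidate is the substitution $u \mapsto z_1^{-1} z_2$, $t \mapsto z_2$ (so $u t^{-1}$ recovers $z_1^{-1}$). For $Q = \sum d_{a,m} u^a t^m \in k[[u]]((t))$ with $a \ge 0$ and $m \ge M'$, this substitution yields $\sum d_{a,m} z_1^{-a} z_2^{a+m}$; setting $b = a + m$, we get $b \ge M'$ globally and $0 \le a \le b - M'$ for each $b$, so the preimage lies in $k[z_1^{-1}]((z_2))$, and the bound $a - b = -m \le -M'$ puts it in $\Pi_1$. The two compositions with $\psi_1$ act as the identity on monomials and extend uniquely to the full rings. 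The one delicate point in the whole argument, and the only obstacle worth flagging, is this precise matching: the $\Pi_1$ condition on $P$ is exactly what guarantees that $\psi_1(P)$ enjoys a global lower bound on its $t$-exponents, and conversely the global $t$-lower-bound intrinsic to $k[[u]]((t))$ is exactly what recovers the $\Pi_1$ constraint on the preimage. Everything else is bookkeeping with monomial substitutions.
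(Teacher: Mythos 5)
Your proposal is correct, and it follows the only natural route: specialize the substitution of Lemma \ref{trick} to $[\alpha]=1$ and re-index the support, then observe that for constant coefficients (where $\ord_M$ of a nonzero scalar is $0$) the $A_1$ condition degenerates to the bound $a-b\le k+l$, which under $z_1^{-a}z_2^b\mapsto u^a t^{b-a}$ is exactly the lower bound on $t$-exponents defining $k[[u]]((t))$. The paper itself declares the proof clear and omits it, so your argument is precisely the intended verification.
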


The proof is clear. 

\begin{nt}
\label{ooo1}
Consider a subspace $W$ in $k[[u]]((t))$ with $\Sup(W)= \langle u^{i}t^{-j}\mbox{\quad }| i,j\ge 0, i-j\le 0 \rangle$ (cf. corollary \ref{trick1}). Let $A$ be a stabilizer subring of $W$: $A\cdot W\subset W$. For any element $a\in A$ we have $\LT(a)\in \Sup(W)$, because for an element $w\in W$ with $\LT (w)=1$ we must have $\LT (aw)=\LT (a)$. So,  $\Sup(A)\subset \Sup(W)$. 
By \cite[lemma 2]{ZO} it is known that the transcendental degree $\trdeg(\Quot(A))\le 2$, where $\Quot (A)$ is the fraction field. 

If we start with a ring $B$ of commuting operators as in theorem \ref{schurpair}(see also remark \ref{schurpair1}) and apply corollary \ref{trick1} to the pair $(W, A)$ from remark \ref{schurpair1}, we'll obtain a pair  $(W,A)$ in $k[[u]]((t))$ as above with  $\trdeg(\Quot(A))=2$ and with another property, which we'll pick out in the following definition.
\end{nt}

\begin{defin}
\label{goodring}
Denote by $\nu_t$ or $\nu_2$ the discrete valuation on the field $k((u))((t))$ with respect to $t$. Denote by $\nu_u$ or $\nu_1$ the discrete valuation on the field $k((u))$. They form a rank two valuation $\nu =\ord_{\Gamma}$ (cf. definition \ref{defin2,5}) on the field $k((u))((t))$: $\nu (a)=(\nu_u(\bar{a}),\nu_t(a))$, where $\bar{a}$ is the residue of the element $at^{-\nu_t(a)}$ in the valuation ring of $\nu_t$. 

For the ring $A\subset k[[u]]((t))$  define 
$$
N_A=GCD\{\nu_t(a), \mbox{\quad} a\in A \mbox{\quad such that \quad} \nu (a)=(0,*)\},
$$ 
where $*$ means any value of the valuation. 

We'll say that the ring $A$ is admissible if there is an element $a\in A$ with $\nu (a)=(1,*)$. 
\end{defin} 

In particular, the ring $A$ obtained from the ring $B$ above is an admissible ring, because $B$ contains an operator of special type (the quasi ellipticity condition). The image of this operator under the transformation from  lemma \ref{trick} satisfies the property from the definition of admissible ring.

Motivated by proposition \ref{chvar}, we'll give also the following definition. 

\begin{defin}
\label{verygood}
For the ring $A\subset k[[u]]((t))$  define 
$$
\tilde{N}_A=GCD\{\nu_t(a), \mbox{\quad} a\in A\}. 
$$ 
We'll say that the ring $A$ is strongly admissible if it is admissible and $\tilde{N}_A=N_A$. 
\end{defin}

\begin{defin}
\label{verygood1}
We say that a $1$-quasi elliptic ring $A\subset k[z_1^{-1}]((z_2))$ from definition \ref{sch} is strongly admissible if its image $\psi_1(A)$ under the transformation from lemma \ref{trick} is strongly admissible.
\end{defin}

\begin{nt}
\label{ooo}
Note that the image $\psi_1(A)$ of a $1$-quasi elliptic ring $A$ is admissible. Conversely, the ring $\psi_1^{-1}(A)$, where $A$ is an admissible ring, is a  $1$-quasi elliptic ring.
\end{nt}

\begin{defin}
\label{ggg1}
For $1$-quasi elliptic commutative ring $B\subset \hat{D}$ one can extend definitions \ref{orter}, \ref{ggg}, and these definitions will be closely related with definitions \ref{goodring}, \ref{verygood}: by theorem \ref{schurpair} $B$ corresponds to a Schur pair $(A,W)$ up to the equivalence, i.e. the ring $A$ is defined up to conjugation by a $1$-admissible operator. Nevertheless, we always have $A\subset \Pi_1$ and $A$ is a $1$-quasi elliptic ring. 

For $1$-quasi elliptic commutative ring $B\subset \hat{D}$ we define numbers $\tilde{N}_B$, $N_B$ to be equal to the numbers $\tilde{N}_A$, $N_A$ (see definition \ref{verygood1}). We say that $B$ is strongly admissible if $A$ is strongly admissible. 
\end{defin}

We claim that our definition is correct, i.e. it don't depend on conjugation of $A$ by a $1$-admissible operator. As we have seen in the proof of corollary \ref{corol3.1} each operator $X$ from $A$ can be written as a finite sum $X=\sum c_{ij}w_{0,0}^{-1}w_{i,j}$, $c_{ij}\in k$. Let $(k,l)$ be a maximal (with respect to the anti lexicographical order) pair of numbers such that $c_{kl}\neq 0$, $k+l\ge i+j$ for all $(i,j)$ with $c_{ij}\neq 0$. It is easy to see that $\nu (\psi_1(X))=(k,l)$. Let $T$ be a $1$-admissible operator. Then using lemma \ref{lemma9} we obtain that $\nu (\psi_1(TXT^{-1}))=\nu (\psi_1(X))=(k,l)$. Thus, the definition of numbers $\tilde{N}_B$, $N_B$ don't depend on conjugation. Again using lemma \ref{lemma9} one can see that this definition coincides with definitions \ref{orter}, \ref{ggg} if $B\subset D$.

Let's recall one more definition (see, for example,~\cite{ZO})
\begin{defin}
\label{techn}
For a $k$-subspace $W$ in $k((u))((t))$, for  $i,j \in \dz\cup \{\infty \}$, $i<j$ 
let
$$
W(i,j) = \frac{W \cap t^ik((u))[[t]]}{W \cap
t^{j}k((u))[[t]]}
$$
be a $k$-subspace in $\frac{t^ik((u))[[t]]}{t^{j}k((u))[[t]]}\simeq k((u))^{j-i}$.
\end{defin}

Note that for spaces $W,A$ as in remark \ref{ooo1} the spaces $W(i,1), A(i,1)$ coincide with the subspaces $W\cap t^ik[[u]][[t]]$, $A\cap t^ik[[u]][[t]]$ of filtration defined by the valuation $\nu_2$. 

\begin{lemma}
\label{qkartier}
Let $A\subset k[[u]][[t]]$ be a commutative $k$-algebra with unity such that $\Sup(A)\subset  \langle u^{i}t^{-j}\mbox{\quad }| i,j\ge 0, i-j\le 0 \rangle$.  
Set $\tilde{A}:=\bigoplus\limits_{n=0}^{\infty}A(-n,1)$.
Assume that $\trdeg(\Quot(A))=2$ and either $\gr (A)=\bigoplus\limits_{n=0}^{\infty}A(-n,1)/A(-n+1,1)$ or $\tilde{A}$ is finitely generated as a $k$-algebra.  
Then 
\begin{enumerate}
\item\label{q1} 
The homogeneous ideal $I=\tilde{A}(-1)$ is prime and it defines a reduced irreducible closed sub-scheme $C$ on the projective surface $X=\Proj \tilde{A}$ which is an ample effective $\dq$-Cartier divisor (i.e. $dC$ is an ample effective Cartier divisor, see remark \ref{divizory}). 
\item\label{q2}
If $A$ is an admissible ring and $N_A=1$, then 
the center $P$ of the valuation $\nu$ induced on the field $\Quot(\tilde{A})$ by the valuation of the two-dimensional local field $k((u))((t))$ is a regular closed point on the curve $C$ as well as on the surface $X$ (cf. \cite[ch.II, ex.4.5]{Ha}).
\end{enumerate}
\end{lemma}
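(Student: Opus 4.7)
The plan is to realize $\tilde{A}$ as a Rees algebra of the $\nu_t$-filtration on $A$, identify the ideal $I$ as a principal ideal, and read off primality and ampleness from the quotient $\gr A$. The support condition $\Sup(A) \subset \langle u^i t^{-j} \mid i,j \ge 0,\ i \le j \rangle$ forces $\nu_t(a) \le 0$ for every nonzero $a \in A$, so $A \cap t\,k((u))[[t]] = 0$ and hence $A(-n,1) = A_{-n} := A \cap t^{-n}k((u))[[t]]$. Consequently $\tilde{A} = \bigoplus_{n \ge 0} A_{-n}$ embeds in $A[s]$ via $a \in A_{-n} \mapsto a s^n$, and the element $s \in \tilde{A}_1$ obtained from $1 \in A_0 \subset A_{-1}$ satisfies $\tilde{A}/(s) \simeq \gr A = \bigoplus_n A_{-n}/A_{-n+1}$, so $I = \tilde{A}(-1)$ is the principal ideal $(s)$.

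For part (1), primality of $I$ reduces to showing $\gr A$ is an integral domain. The $t^{-n}$-coefficient map $\bar a \mapsto a_{-n}(u)\,v^n$ (with $v = t^{-1}$) yields an injective graded $k$-algebra homomorphism $\gr A \hookrightarrow k[[u]][v]$ whose target is a domain, so $(s)$ is prime and $C = V(s)$ is reduced and irreducible. Under either form of the hypothesis, $\tilde{A}$ is a finitely generated graded $k$-algebra with $\tilde{A}_0 = A_0 = k$ (the latter equality follows again from the support bound), so $X = \Proj \tilde{A}$ is projective; the dimension counts $\dim \tilde{A} = \trdeg \Quot A + 1 = 3$ and $\dim \gr A = 2$ then force $\dim X = 2$ and $\dim C = 1$. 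Passing to a Veronese subring $\tilde{A}^{(d)}$ generated in degree one, the sheaf $\co_X(d)$ is very ample and $dC = V(s^d)$ is the ample effective Cartier divisor it cuts out, so $C$ is an ample effective $\dq$-Cartier divisor.

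For part (2), I first locate $P$. Since $\nu_t(a) \le 0$ on $A \setminus \{0\}$, any element with strictly negative $\nu_t$ lies outside the valuation ring of $\nu$, so $\nu$ cannot be centered on $\Spec A = X \setminus C$, forcing $P \in C$. To exhibit regularity I would construct explicit local parameters. Admissibility yields $a \in A$ with $\nu(a) = (1, -j)$ for some $j \ge 1$ (the support condition forces $j \ge 1$). The condition $N_A = 1$ says the numerical semigroup $\{-\nu_t(b) \mid b \in A,\ \nu_u(b) = 0\}$ generates $\dz$ as a group, so I pick finitely many $b_{M_1}, \ldots, b_{M_r} \in A$ with $\nu(b_{M_i}) = (0, -M_i)$ and $\gcd(M_i) = 1$, together with Bezout relations $\sum \alpha_i M_i = 1$ and $\sum \gamma_i M_i = j$. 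Setting
$$
u_P := s \cdot \prod_i b_{M_i}^{-\alpha_i}, \qquad v_P := a \cdot \prod_i b_{M_i}^{-\gamma_i}
$$
in $\Quot A$, a direct check gives $\tilde{A}$-degree $0$ and $\nu$-values $(0, 1)$ and $(1, 0)$ respectively. One then verifies that $u_P, v_P \in \co_{X, P}$ and form a regular system of parameters at $P$, so $\co_{X, P}$ is a regular two-dimensional local ring; since $u_P$ is (up to unit) a local equation for $C$ at $P$, the quotient $\co_{C, P} = \co_{X, P}/(u_P)$ is a regular one-dimensional local ring.

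The main obstacle is the last verification: showing that the candidate elements $u_P, v_P$ genuinely lie in $\co_{X, P}$ (not merely in the valuation ring $\mathcal{O}_\nu \cap \Quot A$) and that they generate $\mathfrak{m}_P$ modulo $\mathfrak{m}_P^2$. Because numerical semigroup gaps generally prevent the existence of a single element of $A$ with valuation $(0, -1)$, one is forced to use several $b_{M_i}$'s and Bezout relations, so $u_P$ and $v_P$ must be rewritten with homogeneous denominators in $\tilde{A}$ that do not vanish at $P$; only then can one compare the images in $\mathfrak{m}_P/\mathfrak{m}_P^2$ using the Rees-ring grading and conclude regularity.
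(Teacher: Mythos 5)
Your treatment of part (\ref{q1}) is correct and essentially the paper's argument in different clothing: identifying $\tilde{A}$ as the Rees algebra of the $\nu_t$-filtration, $I=(s)$, and deducing primality from the fact that the leading-coefficient map makes $\gr A$ a subring of the domain $k[[u]][v]$ is the same valuation computation the paper performs directly on homogeneous elements of $\tilde{A}$ ($a,b\notin I$ iff $\nu_2(a)=-k$, $\nu_2(b)=-l$, hence $\nu_2(ab)=-k-l$), and the Veronese/hyperplane-section argument for ampleness of $dC$ is identical.

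Part (\ref{q2}), however, has a genuine gap, and it is exactly the one you flag yourself at the end. Producing elements $u_P,v_P\in\mathfrak{m}_P$ with $\nu(u_P)=(0,1)$ and $\nu(v_P)=(1,0)$ is only the first (and easy) step; it does not show that they generate $\mathfrak{m}_P$ modulo $\mathfrak{m}_P^2$, and no amount of Bezout bookkeeping on the denominators will do that by itself. The difficulty is that to express an arbitrary $f\in\mathfrak{m}_P$ in terms of $u_P,v_P$ you must iteratively subtract monomials $c\,u_P^iv_P^j$ matching leading terms in $k[[u,t]]$, and this process converges only in a completion --- and a priori in the \emph{wrong} completion, namely the one with respect to the topology induced by the ideals $(u,t)^k\cap \co_{X,P}$ rather than the $\mathfrak{m}_P$-adic one. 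The paper's proof is devoted precisely to closing this gap: it shows the induced-topology completion is $k[[u',t']]$ (via density of $k[u',t']$), uses a Mittag--Leffler argument on the systems $M_k/{p'}^k$ to get a surjection $\rho:\widehat{B_p}\rightarrow\widetilde{B_p}\simeq k[[u',t']]$, and then forces injectivity of $\rho$ by the dimension bound $\dim\widehat{B_p}\le 2$ from \cite[corol.11.19]{AM}, so that regularity descends to $B_p$ by \cite[prop.~11.24]{AM}. Without some argument of this kind (or an equivalent computation of $\dim_k\mathfrak{m}_P/\mathfrak{m}_P^2$), the claim that $\co_{X,P}$ is regular is unproven; your proposal names the obstacle but does not overcome it, so as written the proof of item (\ref{q2}) is incomplete.
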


\begin{proof} \ref{q1}) Denote by $i:I \rightarrow \tilde{A}$ the natural embedding. Clearly, we have $I=(i(1))$, where $1\in I_1= \tilde{A}_0$ and $i(1)\in \tilde{A}_1$.
Let $a\in \tilde{A}_k$, $b\in \tilde{A}_l$ be two homogeneous elements such that $a,b\notin I$. This is possible if and only if $\nu_2(a)=-k$, $\nu_2(b)=-l$ (note that such elements exist due to our assumption on the support and transcendental degree of $A$). Therefore $\nu_2(ab)=-k-l$ and the product $ab\in \tilde{A}_{k+l}$ can not belong to $I$, i.e. $I$ is a prime homogeneous ideal. 

By \cite[prop. 2.4.4]{EGAII} the schemes $\Proj \tilde{A}$ and $\Proj \tilde{A}/I$ are integral. So, the ideal $I$ defines a reduced and irreducible closed subscheme $C$ on $X$.  

If $\gr (A)$ is finitely generated, $\tilde{A}$ is also finitely generated over $k$ (it is easy to check that $\tilde{A}$ is generated by elements $\tilde{b_1}, \ldots ,\tilde{b_p}, i(1)$ as $k$-algebra, where $\tilde{b_1}, \ldots ,\tilde{b_p}$ are lifts of generators $b_1,\ldots ,b_p$ of the algebra $\gr (A)$, cf. also \cite[Ch.III, \S 2.9]{Bu}).  By lemma in \cite[ch.III,\S 8]{Mum} there exists $d\in \dn$ such that the graded ring $\tilde{A}^{(d)}=\oplus_{k=0}^{\infty}\tilde{A}_{kd}$ is generated by $\tilde{A}^{(d)}_1$ over $k$ (and $\tilde{A}^{(d)}_1$ is a finitely generated $k$-subspace because of the condition on the support of $A$). 
We claim that $dC$ is a Cartier divisor. Indeed, it is defined by the ideal $I^d=(i(1)^d)$, and $i(1)^d\in \tilde{A}^{(d)}_1$.  By \cite[prop. 2.4.7]{EGAII} we have $\Proj \tilde{A}\simeq \Proj \tilde{A}^{(d)}$ and $\Proj \tilde{A}/I\simeq \Proj \tilde{A}^{(d)}/I^{(d)}$. So, it suffices to show that the ideal $I^{(d)}$ in $\tilde{A}^{(d)}$ defines a Cartier divisor. 
But it is clear, because the open sets $D(x_i)$, where $x_i\in \tilde{A}^{(d)}_1$, form a covering of $X$ and in each set $D(x_i)$ the ideal $I^{(d)}$ is generated by the element $i(1)^d/x_i$. 

At last, $dC$ is a very ample divisor, because it is a hyperplane section in the embedding $\Proj \tilde{A}^{(d)} \hookrightarrow \Proj \tilde{A}^{(d)}_1\simeq \dpp^N$. 

\ref{q2}) Since $X$ is a projective scheme (hence, it is proper over $k$, see e.g. \cite[ch.II, \S 4]{Ha}), there is a unique center $P$ of the valuation $\nu$ by  \cite[ch.II, ex.4.5]{Ha}. Note that $P$ belongs to an affine set $\Spec \tilde{A}_{(x)}$, where $x\in \tilde{A}$ is an element with the properties $\nu (x)=(0,*)$, $x\notin I$ (such an element exists because $N_A=1$), because $\tilde{A}_{(x)}$ belongs to the valuation ring $R_{\nu}$: indeed, if $x\in \tilde{A}_k$, then $\nu_t(x)=k$, and $\nu (a/x^l)=(p,q)$, where $p,q\ge 0$ for any $a\in \tilde{A}_{kl}$. Moreover, it is easy to see that the element $x^{-1}\in k((u))((t))$ (we consider here $\tilde{A}_k=A(-k,1)$ as a vector subspace in $k((u))((t))$, so, $x\in k((u))((t))$) has the property $x^{-1}\in k[[u]][[t]]=k[[u,t]]$. So, we have a natural embedding $\tilde{A}_{(x)}\hookrightarrow k[[u,t]]$. 

Since $A$ is an admissible ring and $N_A=1$, there are elements $u',t'\in \tilde{A}_{(x)}$ with $\nu (u')=(1,0)$ and $\nu (t')=(0,1)$. Denote $B=\tilde{A}_{(x)}$ and let $p\in B$ be the ideal corresponding to $P$. Clearly $u',t'\in p$ and $p=B\cap (u,t)$, where $(u,t)$ is the ideal in $k[[u,t]]$. So, $B/p\simeq k$ and therefore $p$ is a maximal ideal. Since any element $a\in k[[u,t]]$  with $\nu (a)=(0,0)$ is invertible, we have $B_{p}\subset k[[u,t]]$. We'll denote by $p'$ the maximal ideal in $B_p$.  

 We define a linear topology on $B_p$ by taking as open ideals the ideals $M_k:=(u,t)^k\cap B_p$. It is separated, because $\cap (u,t)^k= 0$ in the ring $k[[u,t]]$. Since $p\subset (u,t)$, we have also ${p'}^k\subset M_k$ for all $k$. So, we have the exact sequence of projective systems:
$$
0\rightarrow M_k/{p'}^k \rightarrow B_p/{p'}^k \rightarrow B_p/M_k \rightarrow 0. 
$$  
Note that all natural homomorphisms $M_{k+1}/{p'}^{k+1} \rightarrow M_k/{p'}^k$ are surjective. Indeed, for a given $a\in M_k$ one can find constants $c_i\in k$, $i=0,\ldots k$ such that $a-\sum_{i=0}^{k}c_i{u'}^i{t'}^{k-i}\in M_{k+1}$. Since $\sum_{i=0}^{k}c_i{u'}^i{t'}^{k-i}\in {p'}^k$, it follows that $a$ belongs to the image of the group $M_{k+1}/{p'}^{k+1}$. So, the system $\{M_k/{p'}^k\}$ satisfies the Mittag-Leffler condition and therefore we have the surjective homomorphism of topological rings
$$
\rho :\hat{B_p} \rightarrow \tilde{B_p},
$$
where $\hat{B_p}=\limproj B_p/{p'}^k$, $\tilde{B_p}=\limproj B_p/M_k$. 
Note that $\rho$ preserves the ring $k[u',t']$, and this ring is dense in $\tilde{B_p}$. 

On the other hand, there is a natural homomorphism of topological rings $\rho ': k[[u',t']]\rightarrow \hat{B_p}$ which also preserves the ring $k[u',t']$. So, the composition $\rho \rho '$ is a homomorphism of complete  topological rings that preserves $k[u',t']$, and the ring $k[u',t']$ is dense in both rings. Therefore, it is an isomorphism $k[[u',t']]\simeq \tilde{B_p}$. So, the ring $\tilde{B_p}$ is regular of Krull dimension 2. 

By \cite[corol.11.19]{AM} we have $\dim \hat{B_p}\le 2$, whence $\rho$ must be injective, i.e. it must be an isomorphism. Then by \cite[prop. 11.24]{AM} the ring $B_p$ is a regular ring, i.e. $P$ is a regular closed point on $X$. 

It's easy to see that $(t)\cap B=I_{(x)}$, where $(t)$ is the ideal in the ring $k[[u,t]]$. So, there is an embedding 
$B/I_{(x)}\hookrightarrow k[[u]]$. By analogous arguments as above we have $\widehat{(B/I_{(x)})_p}\simeq k[[u]]$, whence $P$ is a regular point on $C$. 
\end{proof}

\begin{nt}
\label{divizory}
For an arbitrary projective surface $X$ there is a natural homomorphism $Div(X) \rightarrow Z^1(X)$ of the group of Cartier divisors $Div(X)$ to the group of Weil divisors $Z^1(X)$ (in general not injective). The assertion of lemma claims that the scheme defined by the ideal sheaf $\ci^d$ is a locally principal subscheme in $X$ and therefore corresponds to an effective Cartier divisor $D$. Since $X$ is an integral scheme, we have $CaCl(X)\simeq Pic (X)$. By \cite[prop. 6.18, ch.2]{Ha}, $\ci^d\simeq \co (-D)$. The assertion of lemma claims that the sheaf $\co (D)$ is ample (cf. \cite[\S 2.4, appendix]{Ku4}). 
\end{nt}

\begin{lemma}
\label{rcase}
Let $A\subset k[[u]]((t))$ be a strongly admissible ring. 
Then there exists a monic element $t'\in k[[u]]((t))$ with $\nu (t')=(0,N_A)$ and a monic element $u'\in k[[u]]((t))$ with $\nu (u')=(1,0)$ such that $A\subset k[[u']]((t'))\subset k[[u]]((t))$ and in $k[[u']]((t'))$ the ring $A$ has the number $N_A'=1$. 
\end{lemma}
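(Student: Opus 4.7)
The plan is to construct $t'$ and $u'$ explicitly from elements of $A$, and then to verify the two containments $A\subset k[[u']]((t'))\subset k[[u]]((t))$, with the equality $N'_A=1$ as an immediate consequence.

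\textbf{Construction and the embedding $k[[u']]((t'))\subset k[[u]]((t))$.} Write $N:=N_A$. By the definition of $N_A$, pick $a_1,\ldots,a_r\in A$ with $\nu(a_i)=(0,m_i)$ and $\gcd(m_1,\ldots,m_r)=N$, together with Bezout integers $p_i$ satisfying $\sum_i p_i m_i=N$. Each $a_i$ has unit leading $t$-coefficient in $k[[u]]^*$, hence is invertible in $k[[u]]((t))$, so $\prod_i a_i^{p_i}$ lies in $k[[u]]((t))$ with $\nu=(0,N)$. Scaling by a unit of $k[[u]]$ normalises the leading $t$-coefficient to $1$, yielding the monic $t'$. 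By admissibility take $b\in A$ with $\nu(b)=(1,s)$; strong admissibility forces $N\mid s$, so $\tilde u':=b(t')^{-s/N}$ has $\nu(\tilde u')=(1,0)$, and rescaling produces a monic $u'$. Since $u'\in(u,t)\cdot k[[u]][[t]]$, the substitution $k[[u']]\hookrightarrow k[[u]][[t]]$ is a well-defined continuous injection (each $(u')^n$ lies in $(u,t)^n$), and Laurent series in $t'$ converge in $k[[u]]((t))$ because $(t')^n\in t^{nN}k[[u]][[t]]\to 0$ in the topology of definition \ref{topology}. This gives the subring inclusion $k[[u']]((t'))\subset k[[u]]((t))$.

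\textbf{Containment $A\subset k[[u']]((t'))$.} This is the core of the proof. For $a\in A$, strong admissibility gives $\nu_t(a)=n_0 N$ for some $n_0\in\dz$; I expand $a=\sum_{n\ge n_0}c_n(t')^n$ with $c_n\in k[[u']]$ by induction on $n$. The leading coefficient $c_{n_0}$ is determined by the leading $t$-term of $a\cdot(t')^{-n_0}$ via the isomorphism $k[[u']]\cong k[[u]]$ obtained by reduction modulo $t$ (using $u'\equiv u\cdot v(u)\pmod{t}$ with $v\in k[[u]]^*$). The delicate step is that the remainder $a-c_{n_0}(t')^{n_0}$ must have $\nu_t\ge(n_0+1)N$, not merely $\nu_t>n_0N$, for the induction to continue. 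This compatibility holds because $t'$ and $u'$ are constructed from $A$ itself: the generating elements $a_i,b$ admit tautological expansions in $k[[u']]((t'))$ (for instance $b=u'(t')^{s/N}$), and the property ``lies in $k[[u']]((t'))$'' is preserved under addition, multiplication and topological closure, so it propagates through the ring structure to all of $A$. Iterating yields a series $\sum c_n(t')^n$ converging to $a$ in $k[[u]]((t))$, whence $a\in k[[u']]((t'))$.

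\textbf{Conclusion and main obstacle.} Once the containment holds, the restriction $\nu'_{t'}|_A=\nu_t|_A/N$ shows that the $\gcd$ of $\nu'_{t'}$-values on $(0,*)$-elements of $A$ equals $N_A/N=1$, giving $N'_A=1$. The principal difficulty is the propagation argument of paragraph two: verifying that every element of $A$ (not only the generators $a_i,b$) admits the claimed expansion with $\nu_t$-jumps in $N\dz$. I would handle this by showing that the set of $f\in k[[u]]((t))$ admitting such an expansion is a topologically closed subring of $k[[u]]((t))$, so the presence of the generators used to build $t'$ and $u'$ in this set forces all of $A$ into it via ring operations.
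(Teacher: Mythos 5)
Your construction of $t'$ and $u'$ (Bezout combination of elements $a_i$ with $\nu(a_i)=(0,m_i)$, then $b(t')^{-s/N}$ for an element $b$ with $\nu(b)=(1,s)$, using $\tilde N_A=N_A$ to get $N\mid s$) and your verification that $k[[u']]((t'))$ embeds in $k[[u]]((t))$ agree with the paper, which takes $t',u'$ in the localization $A_{ab}$. The gap is in the containment $A\subset k[[u']]((t'))$, and you have correctly located where it is but your proposed repair does not close it. The set of $f\in k[[u]]((t))$ admitting an expansion $\sum c_n(t')^n$ with $c_n\in k[[u']]$ is indeed a closed subring containing $a_1,\dots,a_r$ and $b$; but those finitely many elements were merely \emph{chosen from} $A$ to build $t'$ and $u'$ --- they do not generate $A$ as a $k$-algebra, topologically or otherwise. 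So your propagation argument only places the closed subalgebra generated by $a_1,\dots,a_r,b$ inside $k[[u']]((t'))$, and says nothing about an arbitrary element of $A$. The sentence ``it propagates through the ring structure to all of $A$'' is a non sequitur.

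The paper's proof handles an arbitrary $v\in A$ directly, and the ingredient you are missing is that strong admissibility constrains \emph{every} element of $A$, not just the chosen ones: since $\tilde N_A=N_A$, each $v\in A$ has $\nu_t(v)\in N_A\dz$, and the same holds for every element of the localization $A_{ab}$ (inverting $a,b$ only shifts $\nu_t$ by multiples of $N_A$), while $\nu_u(\bar{\,\cdot\,})\ge 0$ there as well. Hence the leading term of any nonzero element of $A_{ab}$ is a scalar multiple of $u^kt^{lN_A}$ and can be killed by subtracting $c\,u'^{k}t'^{l}$; the remainder again lies in $A_{ab}$, so the procedure iterates with the rank-two valuation strictly increasing, and the resulting series $\sum c_{k_i,l_i}u'^{k_i}t'^{l_i}$ converges to $v$ in $k[[u']]((t'))$. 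Note that this works precisely because one subtracts \emph{monomials} one at a time, keeping the remainder inside $A_{ab}$; your version, which subtracts the entire coefficient $c_{n_0}(t')^{n_0}$ with $c_{n_0}$ an infinite series in $u'$, leaves $A_{ab}$ after one step, which is why you could not justify the jump $\nu_t\ge(n_0+1)N$. Replacing your second paragraph by this leading-term induction over $A_{ab}$ fixes the proof; the conclusion $N_A'=1$ then follows as you state.
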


\begin{proof} Since $A$ is strongly admissible, there exist two elements $a,b\in A$ such that $\nu (a)=(0,k_1)$, $\nu (b)=(0,k_2)$ and $GCD(k_1,k_2)=N_A$. Then there exists an invertible monic element $t'\in A_{ab}\subset k[[u]]((t))$ such that $\nu (t')=(0,N_A)$ and therefore there exists a monic element $u'\in A_{ab}$ such that $\nu (u')=(1,0)$. 

Let $v\in A$ be an arbitrary element with $\nu (v)=(k,lN_A)$. Then we can choose a constant $c_{k,l}\in k$ so that $\nu (v-c_{k,l}{u'}^{k}{t'}^{l})=(k_1,l_1N_A)<  (k, lN_A)$. If we continue this procedure, then we have a sequence of constants $c_{k,l}, c_{k_1,l_1}, \ldots$ such that 
$$
v- \sum c_{k_i,l_i}{u'}^{k_i}{t'}^{l_i}=0
$$ 
(it is easy to see that the series in the formula converges). So, $A\subset k[[u']]((t'))$. In the ring $k[[u']]((t'))$ we have $GCD(\nu_{t'} (a), \nu_{t'}(b))=1$. Thus, $N_A'=1$.  
\end{proof}

\begin{prop}
\label{puchok}
Let $W,A\subset k[[u]]((t))$ be subspaces such that $\Sup(W)= \langle u^{i}t^{-j}\mbox{\quad }| i,j\ge 0, i-j\le 0 \rangle$, $A$ is a stabilizer subring of $W$: $A\cdot W\subset W$ (cf. remark \ref{ooo1}). Assume that $\trdeg(\Quot(A))=2$, either $\gr (A)$ or $\tilde{A}$ is a finitely generated $k$-algebra and $A$ is a strongly admissible ring, $A\subset k[[u']]((t'))$ (see lemma \ref{rcase}). 
Set $\tilde{W}:=\bigoplus\limits_{n=0}^{\infty}W(-n,1)$ (see definition \ref{techn}). Then 
\begin{enumerate}
\item\label{puch1}
The sheaf $\cf =\Proj (\tilde{W})$ is a quasi coherent torsion free sheaf\footnote{Here and later in the article we use the non-standard notation $\Proj $ for the quasi-coherent sheaf associated with a graded module.}   on the surface $X$ constructed by $A\subset k[[u']]((t'))$ as in lemma \ref{qkartier}. 
Moreover, we have natural embeddings of ${\co}_P$-modules ${\cf}_P\hookrightarrow k[[u,t]]$ and
of rings $\widehat{\co}_P\hookrightarrow k[[u',t']]\subset k[[u,t]]$, where the last embedding is an isomorphism.
\item\label{puch2}
Let $C'=dC$ be a very ample Cartier divisor on $X$ from lemma \ref{qkartier}. 

The natural embeddings $H^0(X, \cf (nC'))\hookrightarrow \cf (nC')\simeq {\cf}_P\hookrightarrow k[[u,t]]$ coming from the embedding ${\cf}_P\hookrightarrow  k[[u,t]]$ of item \ref{puch1} composed with the homomorphism $k[[u,t]] \rightarrow k[[u,t]]/(u,t)^{ndN_A+1}$ give isomorphisms 
$$
H^0(X, \cf (nC'))\simeq k[[u,t]]/(u,t)^{ndN_A+1}
$$
for each $n\ge 0$. 
\end{enumerate}
\end{prop}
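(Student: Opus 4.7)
The plan is to verify the two items in sequence, working throughout with the $t'$-adic grading on $\tilde{A}$ coming from lemma \ref{qkartier} (so that $\tilde{W}$ inherits a compatible grading as an $\tilde{A}$-module via the rank-two valuation $\nu$).

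For item \ref{puch1}, I would first observe that the module action of $\tilde{A}$ on $\tilde{W}$ is simply multiplication inside the overring $k((u))((t))$; since $\nu$ is additive, $\tilde{A}_m \cdot \tilde{W}_n \subset \tilde{W}_{m+n}$, so $\cf = \Proj \tilde{W}$ is a quasi-coherent $\co_X$-module. Torsion-freeness is automatic because $\tilde{W}$ lives inside the field $k((u))((t))$, so multiplication by any nonzero homogeneous element of $\tilde{A}$ is injective. For the embedding $\cf_P \hookrightarrow k[[u,t]]$ I would imitate the argument in the proof of lemma \ref{qkartier}, item \ref{q2}: pick a homogeneous $x \in \tilde{A}$ with $\nu(x) = (0,*)$ and $x \notin I$, and note that a typical element of $\tilde{W}_{(x)}$ has the form $w/x^l$ with $w \in \tilde{W}_{l\deg x}$, whence $\nu(w/x^l) \ge (0,0)$ and so $w/x^l \in k[[u,t]]$. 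The identification $\widehat{\co}_P \simeq k[[u',t']]$ is precisely item \ref{q2} of lemma \ref{qkartier} applied in the coordinates $(u',t')$, and the further inclusion $k[[u',t']] \subset k[[u,t]]$ holds since $u', t' \in k[[u,t]]$ have positive $\mathfrak{m}$-adic orders.

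For item \ref{puch2}, the first step will be to identify global sections with a graded piece of $\tilde{W}$. Since $C' = dC$ is very ample and $\co_X(C') \simeq \co(1)$ in the Veronese grading $\tilde{A}^{(d)}$, I would claim that $H^0(X, \cf(nC')) = \tilde{W}^{(d)}_n = \tilde{W}_{nd}$ for every $n \ge 0$. The canonical support constraint on $W$ will supply a basis $\{w_{i,j}\}_{i,j \ge 0}$ with $w_{i,j}$ of $\nu$-order $(i,-(i+j))$; the basis elements contained in $\tilde{W}_{nd}$ are then precisely those with $i+j \le ndN_A$, so $\dim \tilde{W}_{nd} = \binom{ndN_A+2}{2} = \dim k[[u,t]]/(u,t)^{ndN_A+1}$.

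The map will then be constructed as in the statement: a global section $s$ is sent to its germ at $P$, trivialized via the local equation $t'^{nd}$ of $nC'$ at $P$ to give an element of $\cf_P \subset k[[u,t]]$, and then reduced modulo $(u,t)^{ndN_A+1}$. For injectivity, writing $w = \sum c_{i,j} w_{i,j} \in \tilde{W}_{nd}$ with $i+j \le ndN_A$, I would argue that after multiplication by $t'^{nd}$ (whose leading monomial in $(u,t)$ is $t^{ndN_A}$) the leading monomials $c_{i,j} u^i t^{ndN_A-(i+j)}$ are pairwise distinct and all lie in $(u,t)$-degree at most $ndN_A$, so a vanishing image modulo $(u,t)^{ndN_A+1}$ forces every $c_{i,j}=0$; surjectivity then follows from the matching of dimensions. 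The main obstacle I anticipate is the identification $H^0(X,\cf(nC')) = \tilde{W}_{nd}$ for \emph{all} $n \ge 0$, including small $n$: either a direct verification via the Veronese $\tilde{A}^{(d)}$ using the very ampleness of $C'$ and the torsion-freeness of $\tilde{W}^{(d)}$ will suffice, or a separate argument controlling higher cohomology is needed, and tracking the compatibility between the $t$-grading used to define $\tilde{W}$ and the $t'$-grading intrinsic to $X$ is the technically delicate point.
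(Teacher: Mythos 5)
Your item \ref{puch1} follows the paper's route essentially verbatim (torsion-freeness from $\tilde W$ sitting inside the field $k((u))((t))$, the embedding $\cf_P\hookrightarrow k[[u,t]]$ by localizing at a homogeneous $x\notin I$ with $\nu(x)=(0,*)$, and $\widehat{\co}_P\simeq k[[u',t']]$ from lemma \ref{qkartier}, item \ref{q2}), and your concluding injectivity-plus-dimension-count for item \ref{puch2} is also how the paper finishes. The problem is the step you yourself flag as ``the main obstacle'': the identification $H^0(X,\cf(nC'))=W(-ndN_A,1)$ for \emph{all} $n\ge 0$. This is a genuine gap, not a routine verification. It amounts to a saturation statement for the graded module $\tilde W^{(dN_A)}$, and neither of your two suggested routes closes it: very ampleness of $C'$ plus torsion-freeness of $\tilde W^{(dN_A)}$ only gives you the inclusion $(\tilde W^{(dN_A)}(n))_0\subseteq H^0$, and any Serre-type vanishing or ``control of higher cohomology'' only yields equality for $n\gg 0$, whereas the proposition needs it for every $n\ge 0$ including $n=0$. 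The paper isolates exactly this point as lemma \ref{kogomologii} and proves it by a valuation argument specific to the situation: a global section is a compatible family $\tilde a_i/x_i^{k_i}$ over the cover by the $D(x_i)$ with $x_i\in\tilde A_d$; choosing some $x_i\in\tilde A_d\setminus\tilde A_{d-1}$ (possible because not all of $\tilde A_d$ lies in the ideal of $C$) and comparing $\nu_t(\tilde a_1 x_i^{k_i})$ with $\nu_t(\tilde a_i x_1^{k_1})$ forces $k_1=0$, i.e.\ the section lives in degree zero. Some such argument has to be supplied; without it your proof of item \ref{puch2} does not go through.

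A second, smaller defect is the grading bookkeeping, which is precisely the ``technically delicate point'' you mention at the end but then get inconsistent: $\tilde W$ is graded by $\nu_t$ (steps of $t$), while $\tilde A$ is graded by $\nu_{t'}$ (each step worth $N_A$ powers of $t$), so the correct identification is $H^0(X,\cf(nC'))=\tilde W^{(dN_A)}_n=W(-ndN_A,1)$, not $\tilde W^{(d)}_n=\tilde W_{nd}$ as you write. Your subsequent dimension count (basis elements with $i+j\le ndN_A$, giving $\binom{ndN_A+2}{2}$) is the right one for $W(-ndN_A,1)$, so the two halves of your sketch do not refer to the same space. Once the Veronese power is corrected and lemma \ref{kogomologii} (or an equivalent saturation argument) is in place, the rest of your outline matches the paper's proof.
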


\begin{proof} \ref{puch1}). By the same arguments as in the proof of lemma \ref{qkartier}, item \ref{q2} we have naturally defined embeddings of rings  $\co_P\hookrightarrow k[[u',t']]\subset k[[u,t]]$, $\widehat{\co}_P\simeq k[[u',t']]\hookrightarrow k[[u,t]]$. They define a $\co_P$ and $\widehat{\co}_P$-module structure on $k[[u,t]]$. Since $\tilde{W}$ is a torsion free $\tilde{A}$-module, the sheaf $\cf$ is also torsion free. Thus we have a naturally defined embedding of $\co_P$-modules $\cf_P\hookrightarrow k[[u,t]]$. 

\begin{nt}
Since $W$ contains elements of any valuation $(0,k)$, $k\le 0$ (because of our assumptions on the support of $W$), there are elements $f_1,\ldots ,f_{N_A}\in  \cf_P\subset k[[u,t]]$ such that $\nu (f_i)=(0,i-1)$, $i=1, \ldots N_A$. Clearly, the sheaf $\cf$ can be represented as a direct limit of coherent sheaves, $\cf =\limind \cf_i$ such that $f_1, \ldots , f_{N_A}\in {\cf_i}_P$ for any $i$.  
Consider the map 
\begin{equation}
\label{isom1000}
\co_P^{\oplus N_A}\rightarrow {\cf_i}_P\subset k[[u,t]], \mbox{\quad} (a_1,\ldots a_{N_A})\mapsto a_1f_1+\ldots +a_{N_A}f_{N_A}.
\end{equation}
Clearly, this is an embedding of $\co_P$-modules (since the elements $a_if_i$ have different valuations in the ring $k[[u,t]]$ and there is no torsion, their sum can not be equal to zero). Arguing as in the proof of lemma \ref{qkartier}, item \ref{q2}, we obtain that the map 
$$
\widetilde{\co}_P^{\oplus N_A}\rightarrow \widetilde{\cf_i}_P\simeq k[[u,t]]
$$ 
is an isomorphism of $\widehat{\co}_P$-modules for each $i$ (the completion is with respect to the $M_k$-adic topology). We also have the surjective homomorphism of modules $\rho :\widehat{\cf}_P \rightarrow \widetilde{\cf}_P$. This homomorphism can have a non-trivial kernel, see for examples remark 3.3 and corollary 3.1 in \cite{Ku4}.
\end{nt} 

\ref{puch2}). Since $\cf$ is a torsion free sheaf, we have the canonical embeddings $H^0(X, \cf (nC'))\hookrightarrow \cf_P(nC')$ for all $n\ge 0$. We have $\cf_P(nC')\simeq \cf_P$, and  the  isomorphism of these $\co_P$-modules is given by multiplication by $x^{-1}$, where $x\in \tilde{A}$ is an element with the properties $\nu (x)=(0,-ndN_A)$ as in the proof of item \ref{q2} of lemma \ref{qkartier}. In the proof of item \ref{puch1} we have also seen that ${\cf}_P\hookrightarrow k[[u,t]]$. 

Note that for all $n$ we have $\Proj (\tilde{W}(ndN_A))\simeq \Proj (\tilde{W}^{(dN_A)}(n))$ by \cite[prop. 2.4.7]{EGAII}, and $\Proj (\tilde{W}^{(dN_A)}(n))\simeq \Proj (\tilde{W}^{(dN_A)})(n)\simeq \cf (nC')$ by \cite[ch.II, prop.5.12]{Ha}. Analogously, $\Proj (\tilde{A} (ndN_A))\simeq \co_X(nC')$. 
To prove the rest of the proposition, we need the following lemma. 

\begin{lemma}
\label{kogomologii}
We have $H^0(X,\Proj (\tilde{W}(ndN_A)))= W(-ndN_A,1)$, $H^0(X,\Proj (\tilde{A} (ndN_A)))= A(-ndN_A,1)$ for all $n\ge 0$. 
\end{lemma} 

\begin{proof} The proof is the same for both sheaves. We'll write it for the sheaf $\cf$. 

 By definition, $W(-ndN_A,1)=(\tilde{W}^{(dN_A)}(n))_0\subset H^0(X,\Proj (\tilde{W}(ndN_A)))$. 
Set $\tilde{A}=\bigoplus\limits_{n=0}^{\infty}A'(-n,1)$, where $A'(-n,1)$ are subspaces defined in $k[[u']]((t'))$. Note that $A'(-n,1)=A(-nN_A,1)$, thus $\tilde{W}^{(dN_A)}(n)$ is a graded $\tilde{A}^{(d)}$-module. Recall (see lemma \ref{qkartier}) that the algebra $\tilde{A}^{(d)}$ is generated by $\tilde{A}_d$ as $k$-algebra. 

Let $a\in H^0(X,\Proj (\tilde{W}(ndN_A)))$, $a\notin W(-ndN_A,1)$. Then $a=(a_1,\ldots ,a_k)$, where $a_i\in (\tilde{W}^{(dN_A)}(n))_{(x_i)}$, $x_i\in \tilde{A}_{d}$ are generators of the space $\tilde{A}_{d}$ such that $x_1=1_1^{d}$, and $a_i=a_j$ in $\tilde{A}_{x_ix_j}$ (here we denote by $1_1$ the element $1$ in the component $\tilde{A}_1$).  

We have $a_i=\tilde{a}_i/x_i^{k_i}$ ($\tilde{a}_i\in \tilde{W}^{(dN_A)}(n)_{k_i}=\tilde{W}_{(k_i+n)dN_A}$), $a_1=\tilde{a}_1/x_1^{k_1}$ and $k_1>0$ since $a\notin W(-ndN_A,1)$. Indeed, if $\tilde{a}_1\in (\tilde{W}^{(dN_A)}(n))_{0}=W(-ndN_A,1)$, then $a=\tilde{a}_1$ since $\tilde{W}^{(dN_A)}(n)$ is a torsion free $\tilde{A}^{(d)}$-module, a contradiction. 
So, we have 
$$
\tilde{a}_1\in (\tilde{W}^{(dN_A)}(n))_{k_1}\backslash (\tilde{W}^{(dN_A)}(n))_{k_1-1}
$$ 
(or, equivalently, $(n+k_1)dN_A\ge \nu_t(\tilde{a}_1)>(n+k_1-1)dN_A$). 

Then for $x_i\in \tilde{A}_d\backslash \tilde{A}_{d-1}$ (such an element $x_i$ exists because all elements from $\tilde{A}_{d-1}\subset \tilde{A}_d$ lie in the ideal that defines the divisor $C$) we have $x_i^{k_i}\in \tilde{A}_{dk_i}\backslash \tilde{A}_{dk_i-1}$ (or, equivalently, $\nu_t(x_i^{k_i})=dk_iN_A$) and therefore 
$$
\tilde{a}_1 x_i^{k_i}\in (\tilde{W}^{(dN_A)}(n))_{k_1+k_i}\backslash (\tilde{W}^{(dN_A)}(n))_{k_1+k_i-1},
$$ 
because $\nu_t(\tilde{a}_1 x_i^{k_i})> (n+k_1+k_i-1)dN_A$. 

On the other hand, we have the equality $\tilde{a}_1 x_i^{k_i}= \tilde{a}_i x_1^{k_1}$, and 
$$\tilde{a}_i x_1^{k_1}\in (\tilde{W}^{(dN_A)}(n))_{k_1+k_i-1}\subset (\tilde{W}^{(dN_A)}(n))_{k_1+k_i},
$$ 
because $\nu_t(\tilde{a}_i x_1^{k_1})=\nu_t(\tilde{a}_i)\le (n+k_i+k_1-1)dN_A$, 
a contradiction. So, $a\in W(-ndN_A,1)$.  
\end{proof}

Now we have the embeddings $H^0(X,\cf (nC'))= W(-ndN_A,1) \hookrightarrow \cf (nC')_P\simeq {\cf}_P\hookrightarrow k[[u,t]]$ given by multiplication by $x^{-1}$. Because of our assumptions on the support of $W$, the composition with the homomorphism $k[[u,t]] \rightarrow k[[u,t]]/(u,t)^{ndN_A+1}$ gives isomorphisms 
$$
H^0(X, \cf (nC'))\simeq k[[u,t]]/(u,t)^{ndN_A+1}
$$
for each $n\ge 0$. Note that they don't depend on the choice of the   isomorphism $\cf_P(nC')\simeq \cf_P$. 
\end{proof}

Now we want to establish the correspondence between Schur pairs and geometric data from lemma \ref{qkartier} and  proposition \ref{puchok}. The most convenient way to do this is to establish a categorical equivalence generalizing the equivalence from one-dimensional situation, see \cite[th.4.6]{Mu}, because we have a lot of data involved.

\begin{defin}
\label{geomdata}
We call $(X,C,P,\cf ,\pi , \phi )$ a geometric data of rank $r$ if it consists of the following data:
\begin{enumerate}
\item\label{dat1}
$X$ is a reduced irreducible projective algebraic surface defined over a field $k$;
\item\label{dat2}
$C$ is a reduced irreducible ample $\dq$-Cartier  divisor on $X$;
\item\label{dat3}
$P\in C$ is a closed $k$-point, which is
regular on $C$ and on $X$;
\item\label{dat4}
$$
\pi : \widehat{\co}_{P}\longrightarrow k[[u,t]]
$$
is a ring homomorphism such that
the image of the maximal ideal of the ring $\widehat{\co}_{P}$ lies in the maximal ideal $(u,t)$ of the ring $k[[u,t]]$, and $\nu (\pi (f))=(0,r)$, $\nu (\pi (g))=(1,0)$, where $f\in {\co}_{P}$ is a local equation of the curve $C$ in a neighbourhood of $P$ (since $P$ is a regular point, the ideal sheaf of $C$ at $P$ is generated by one element), and $g\in \co_P$ restricted to $C$ is a local equation of the point $P$ on $C$ (Thus, $g,f$ are generators of the maximal ideal $\cm_P$ in $\co_P$).

Once for all, we choose parameters $u,t$ and fix them (note that $k[[u,t]]$ is a free $\widehat{\co}_P$-module of rank $r$).
\item\label{dat5}
$\cf$ is a torsion free quasi-coherent sheaf on $X$.
\item\label{dat6}
$\phi :{\cf}_P \hookrightarrow k[[u,t]]$ is a ${\co}_P$-module embedding such that the homomorphisms  $$H^0(X, \cf (nC')) \rightarrow k[[u,t]]/(u,t)^{ndr+1}$$ obtained as compositions of natural homomorphisms
$$
H^0(X, \cf (nC'))\hookrightarrow {\cf}(nC')_P\stackrel{f^{nd}}{\simeq}\cf_P\stackrel{\phi}{\hookrightarrow} k[[u,t]] \rightarrow k[[u,t]]/(u,t)^{ndr+1} \mbox{,}
$$
where $C'=dC$ is a very ample divisor,
are isomorphisms for any $n \ge 0$.
\end{enumerate}
Two geometric data $(X,C,P,\cf ,\pi_1 , \phi_1 )$ and $(X,C,P,\cf ,\pi_2 , \phi_2 )$ are identified if the images of the embeddings
(obtained by means of multiplication to $f^{nd}$ as above)
$$
H^0(X, \cf (nC'))\hookrightarrow {\cf}_P\stackrel{\phi_1}{\hookrightarrow} k[[u,t]], \mbox{\quad} H^0(X, \co (nC'))\hookrightarrow \widehat{\co}_P\stackrel{\pi_1}{\hookrightarrow} k[[u,t]]
$$
and
$$
H^0(X, \cf (nC'))\hookrightarrow {\cf}_P\stackrel{\phi_2}{\hookrightarrow} k[[u,t]], \mbox{\quad} H^0(X, \co (nC'))\hookrightarrow \widehat{\co}_P\stackrel{\pi_2}{\hookrightarrow} k[[u,t]]
$$
coincide for any $n \ge 0$.
The set of all quintets of rank $r$ is denoted by $\cq_r$.
\end{defin}

\begin{nt}
\label{remark3.5}
Our definition of a geometric data is slightly more general than analogous definitions in \cite{Pa}, \cite{Os}. In particular, we don't demand that a surface is a Cohen-Macaulay, the divisor $C$ can be not Cartier, but $\dq$-Cartier, and the sheaf $\cf$ can be not locally free. 

These restrictions in definitions of works  \cite{Pa}, \cite{Os} are explained by the fact that geometric data with these restrictions can be reconstructed by subspaces lying in the image of the Krichever-Parshin map described in loc.cit. using certain combinatorial construction. In fact, we don't need this construction in our results. 
\end{nt} 

\begin{nt}
We would like to emphasize that the rank $r$ of the geometric data in general differs from the rank of the sheaf $\cf$, cf. \cite[rem.3.3]{Ku4}.

If $\cf_P$ is a free $\co_P$-module of rank $r$, then $\phi$ induces an isomorphism $\widehat{\cf}_P\simeq k[[u,t]]$ of $\widehat{\co}_P$-modules.
This condition is satisfied if $\cf$ is a coherent sheaf of rank $r$, see \cite[corol.3.1]{Ku4}.
\end{nt}

\begin{defin}
\label{geomcategory}
We define a category $\cq$ of geometric data as follows:
\begin{enumerate}
\item\label{cat1}
The set of objects is defined by
$$
Ob (\cq )=\bigcup_{r\in \sdn} \cq_r.
$$
\item\label{cat2}
A morphism
$$
(\beta , \psi ): (X_1,C_1,P_1,\cf_1 ,\pi_1 , \phi_1 ) \rightarrow (X_2,C_2,P_2,\cf_2 ,\pi_2 , \phi_2 )
$$
of two objects
consists of a morphism $\beta :X_1\rightarrow X_2$ of surfaces and a homomorphism $\psi :\cf_2\rightarrow \beta_*\cf_1$ of sheaves on $X_2$ such that:
\begin{enumerate}
\item\label{cat1.1}
$\beta |_{C_1} :C_1\rightarrow C_2$ is a morphism of curves;
\item\label{cat1.2}
$$
\beta (P_1)=P_2.
$$
\item\label{cat1.3}
There exists a continuous ring isomorphism $h:k[[u,t]] \rightarrow k[[u,t]]$ such that
$$
h(u)=u \mbox{\quad mod \quad} (u^2)+(t), \mbox{\quad} h(t)=t \mbox{\quad mod \quad} (ut)+(t^2),
$$
and the following commutative diagram holds:
$$
\begin{CD}
k[[u,t]] @>h>> k[[u,t]]\\
@AA\pi_2A  @AA\pi_1A\\
\widehat{\co}_{X_2, P_2} @>\beta_{P_1}^{\sharp}>> \widehat{\co}_{X_1, P_1}
\end{CD}
$$
\item\label{cat1.4}
Let's denote by ${\beta}_*(\phi_1)$ a  composition of morphisms of ${\co}_{P_2}$-modules
$${\beta}_*(\phi_1): {\beta_*\cf_1}_{P_2} \rightarrow {\cf_{1}}_{P_1}\hookrightarrow k[[u,t]].$$
There is a $k[[u,t]]$-module isomorphism $\xi :k[[u,t]] \simeq h_*(k[[u,t]])$  such that the following commutative diagram of morphisms of ${\co}_{P_2}$-modules holds:
$$
\begin{CD}
{\cf_2}_{P_2} @>{\psi}>> {\beta_*\cf_1}_{P_2}\\
@VV\phi_2V @VV{\beta}_*(\phi_1)V \\
k[[u,t]] @>\xi >> h_*(k[[u,t]])=k[[u,t]]
\end{CD}
$$
\end{enumerate}
\end{enumerate}
\end{defin}

\begin{defin}
\label{schurdata}
A pair $(A,W)$, where $A,W\subset k[[u]]((t))$, is said to be a Schur pair of rank $r$ if the following conditions are satisfied:
\begin{enumerate}
\item\label{sdat1}
$A$ is a $k$-algebra with unity, 
$\Sup(W)=\langle u^{i}t^{-j}\mbox{\quad }| i,j\ge 0, i-j\le 0 \rangle$ and $A\cdot W\subset W$. 
\item\label{sdat2}
$A$ is a strongly admissible ring (see definition \ref{verygood}), $A$ is finitely generated as $k$-algebra, $\trdeg(\Quot(A))=2$  and $N_A=r$. 
\end{enumerate}
We denote by $\cs_r$ the set of all Schur pairs of rank $r$.
\end{defin}

\begin{nt}
Clearly, for a given Schur pair $(A,W)$ the pair $(\psi_1^{-1}(A), \psi_1^{-1}(W))$ (see corollary \ref{trick1} for definition of $\psi_1$) is a 1-quasi elliptic Schur pair from definition \ref{sch}. Conversely, if $(A,W)$ is a 1-quasi elliptic Schur pair such that $A$ is a strongly admissible ring, then $(\psi_1(A), \psi_1(W))$ is a Schur pair. 
\end{nt} 

\begin{defin}
\label{transfer}
For a given subspace $W\subset k[[u]]((t))$ we define the action of an operator $T\in \Pi_1$ (see corollary \ref{corol1.5}) on $W$ by the formula 
$$
WT=\psi_1(\psi_1^{-1}(W)T).
$$
If $T$ is an $1$-admissible operator (see def. \ref{admissible}) and $A\subset k[[u]]((t))$ is a subring, we define 
$$
T^{-1}AT= \psi_1(T^{-1}\psi_1^{-1}(A)T).
$$ 
\end{defin}

\begin{defin}
\label{schurcategory}
We define the category of Schur pairs $\cs$ as follows:
\begin{enumerate}
\item\label{scat1}
$Ob(\cs )=\bigcup_{r\in \sdn}\cs_r$.
\item\label{scat2}
A morphism $T: (A_2,W_2) \rightarrow (A_1, W_1)$ of two pairs  consists of twisted inclusions
$$
T^{-1}A_2T\hookrightarrow A_1, \mbox{\quad} W_2T \hookrightarrow W_1,
$$
where $T$ is an arbitrary $1$-admissible operator. 
\end{enumerate}
\end{defin} 

In fact, as it follows from definitions, $W_2T=W_1$ as a $k$-subspace in the second inclusion $W_2T \hookrightarrow W_1$ above.

\begin{defin}
\label{mapxi}
Given a geometric data $(X,C,P,\cf ,\pi , \phi )$ of rank $r$ we define a pair of subspaces
$$W,A\subset k[[u]]((t))$$
as follows:

Let $f^d$ be a local generator of the ideal $\co_X(-C')_P$, where $C'=dC$ is a very ample Cartier divisor (cf. definition  \ref{geomdata}, item \ref{dat6}). Then $\nu (\pi (f^d))=(0,r^d)$ in the ring $k[[u,t]]$ and therefore  $\pi (f^d)^{-1}\in k[[u]]((t))$. So, we have natural embeddings for any $n >0$
$$
H^0(X, \cf (nC'))\hookrightarrow {\cf (nC')}_P\simeq f^{-nd} ({\cf }_P) \hookrightarrow k[[u]]((t)) \mbox{,}
$$
where the last embedding is the embedding $f^{-nd}{\cf }_P \stackrel{\phi }{\hookrightarrow } f^{-nd} k[[u,t]] {\hookrightarrow} k[[u]]((t))$ (cf. definition \ref{geomdata}, item~\ref{dat6}). Hence we have the embedding
$$
\chi_1 \; : \; H^0(X\backslash C, \cf )\simeq \limind_{n >0} H^0(X, \cf (nC')) \hookrightarrow k[[u]]((t)) \mbox{.}
$$
We define $W \eqdef \chi_1(H^0(X\backslash C, \cf ))$. Analogously the embedding $H^0(X\backslash C, \co )\hookrightarrow k[[u]]((t))$ is defined (and we'll denote it also by $\chi_1$). We define $A \eqdef \chi_1(H^0(X\backslash C, \co ))$.
\end{defin}

Note that the space $W$ satisfies  condition \ref{sdat1} of definition \ref{schurdata} for the space $W$. 
As it follows from definition, $A\subset k[[u']]((t')) = k[[u]]((t^r))$, where $t'=\pi (f)$, $u'=\pi (g)$ (cf. definition \ref{geomdata}, item \ref{dat4}). Thus, on $A$ there is a filtration $A_n=A'(-n,1)=A(-nr,1)$ induced by the filtration ${t'}^{-n}k[[u']][[t']]$ on the space $k[[u']]((t'))$:
$$
A_n = {A \cap {t'}^{-n}k[[u']][[t']]}=A'(-n,1)= {A \cap {t}^{-nr}k[[u]][[t]]}=A(-nr,1).
$$
Also $\Sup (A)\subset \Sup (W)$, because $1\in \Sup W$ and $W$ is (by construction) a torsion free $A$- module. Clearly, $\trdeg (\Quot (A))=2$ and $A$ is finitely generated as a $k$-algebra. Because of item \ref{dat4} of definition \ref{geomdata} we have $N_A\ge r$, $\tilde{N}_A\ge r$.

\begin{lemma}
\label{claim1}
For a geometric data $(X,C,P,\cf ,\pi , \phi )$ of rank $r$ we have $H^0(X, \co_X(nC'))\simeq A_{nd}$ for all $n\ge 0$, where $C'=dC$ is an ample Cartier divisor. 
\end{lemma}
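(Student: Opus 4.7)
I would prove the two inclusions $\chi_1(H^0(X,\co_X(nC'))) \subseteq A_{nd}$ and $A_{nd} \subseteq \chi_1(H^0(X,\co_X(nC')))$ separately, in each case tracking how the local picture at $P$ built into $\chi_1$ interacts with the $t$-adic filtration.

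For the first inclusion, take $a' \in H^0(X,\co_X(nC'))$. Tracing definition \ref{mapxi} for $\cf = \co_X$, the map $\chi_1$ restricted to $H^0(X,\co_X(nC'))$ factors through
$$
H^0(X,\co_X(nC')) \hookrightarrow \co_X(nC')_P \simeq f^{-nd}\co_P \stackrel{\pi}{\hookrightarrow} \pi(f)^{-nd} k[[u,t]] \hookrightarrow k[[u]]((t)).
$$
Since $\nu(\pi(f)) = (0,r)$, the element $\pi(f)$ is $t^r$ times a unit of $k[[u,t]]$, so $\pi(f)^{-nd} \in t^{-ndr} k[[u,t]]$, hence $\chi_1(a') \in t^{-ndr}k[[u]][[t]]$. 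Combined with $\chi_1(a') \in A$ by construction, this gives $\chi_1(a') \in A_{nd}$.

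For the reverse inclusion, take $a \in A_{nd}$ and let $a' \in H^0(X\setminus C, \co)$ be its unique $\chi_1$-preimage. Because $P$ is regular on both $X$ and $C$, the irreducible curve $C$ gives a discrete valuation $\ord_C$ on $k(X)$ computable locally at $P$: writing $a' = f^{-k}b$ with $b \in \co_P$ and $b \notin f\co_P$, we have $k = -\ord_C(a')$. If $k \leq 0$ then $a' \in H^0(X,\co_X) \subseteq H^0(X,\co_X(nC'))$ already; otherwise $\chi_1(a') = \pi(f)^{-k}\pi(b)$, and provided that $\nu_t(\pi(b)) = 0$, we have $\nu_t(\chi_1(a')) = -kr$, so the hypothesis $a \in A_{nd}$ (equivalently $\nu_t(\chi_1(a')) \geq -ndr$) forces $k \leq nd$, whence $a' \in H^0(X,\co_X(nC'))$.

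The one non-routine point is the nonvanishing $\nu_t(\pi(b)) = 0$ for $b \in \co_P \setminus f\co_P$. Since $P$ is regular on $X$ with $\cm_P = (f,g)$ (item \ref{dat4} of definition \ref{geomdata}), $\widehat{\co}_P \simeq k[[f,g]]$, and $b \notin f\co_P$ means $b(0,g) \neq 0$; write $b(0,g) = g^s v_0(g)$ with $v_0 \in k[[g]]^\times$ and $s \ge 0$, so that $b = g^s v_0(g) + f\, w(f,g)$ in $k[[f,g]]$. Now $\pi(f) \equiv 0 \pmod t$ because $\nu_t(\pi(f)) = r \geq 1$, while the normalization $\nu(\pi(g)) = (1,0)$ forces $\pi(g) \equiv u v_1 \pmod t$ for some unit $v_1 \in k[[u]]^\times$. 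Reducing the equation above modulo $t$ yields $\pi(b) \equiv (uv_1)^s v_0(uv_1) \pmod t$, which is nonzero in $k[[u]]$, establishing the claim. This last step, together with the observation that $\ord_C$ really is a single valuation on $k(X)$ (so that a local bound on pole order at $P$ is a global bound), is where I expect the only real work to lie; the rest is bookkeeping with the valuations on $k[[u]]((t))$.
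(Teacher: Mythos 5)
Your proof follows essentially the same route as the paper's: one inclusion is valuation bookkeeping with $\nu(\pi(f))=(0,r)$, and the reverse inclusion is obtained by showing that $a\in A_{nd}$ forces the pole order of $a$ along $C$ at $P$ to be at most $nd$, and then globalizing along the irreducible curve $C$. The difference is in where the detail sits. The paper compresses the local step into the one-line assertion ``since $a\in A_{nd}$, we have $a\in f^{-nd}(\co_{X,P})$''; your explicit verification that $\nu_t(\pi(b))=0$ for $b\in\co_P\setminus f\co_P$ (via $\widehat{\co}_P\simeq k[[f,g]]$, $\pi(f)\equiv 0$ and $\pi(g)\equiv uv_1 \bmod t$) is precisely the justification that assertion needs, so on this point your write-up is more complete than the paper's. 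Conversely, the globalization is the one step the paper actually argues and the one step you defer: the paper restricts the image of $a$ in $H^0(C,\co_X(mC')/\co_X(nC'))$ to a neighbourhood of $P$ and invokes injectivity of that restriction, coming from irreducibility of the thickened divisor, whereas you appeal to ``$\ord_C$ is a single valuation on $k(X)$''. Be aware that a bound on $\ord_C$ at the generic point of $C$ (equivalently at the regular point $P$) does not by itself bound the pole order at an arbitrary closed point $Q\in C$ unless $\co_{X,Q}/(h_Q^{(m-n)d})$ has no embedded primes --- and $X$ is not assumed normal or Cohen--Macaulay here (remark \ref{remark3.5}); this is exactly the issue the paper's irreducibility argument is meant to dispose of, so you are not missing anything the paper supplies, but if you expand this step you should phrase it as the absence of sections of $\co_X(mC')/\co_X(nC')$ supported on proper closed subsets of $C$, not purely through the valuation at the generic point. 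You should also record why $\ord_C$ is a discrete valuation at all: $\co_{X,\eta_C}$ is the localization of the regular local ring $\co_{X,P}$ at the prime $(f)$, hence a DVR.
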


\begin{proof}
By definition of the ring $A$ we have 
$$
A_{nd}=\{a\in A| f^{nd}a\in k[[u]][[t]]\}=\{a\in A| \nu_t(f^{nd}a)\ge 0\}.
$$
We also have by definition $\chi_1(H^0(X,\co_X(nC')))\subset A_{nd}$. Let $a\in A_{nd}$. Then 
$$a\in \chi_1(H^0(X, \co_X(mC')))$$ 
for some $m\ge n$. Let's show that $a\in \chi_1(H^0(X, \co_X(nC')))$. Assume the converse: $a\notin \chi_1(H^0(X, \co_X(nC')))$.  Below we will identify $a$ with its preimage in $H^0(X\backslash C,\co_X)$ or in $f^{-nd}(\co_{X,P})$. 

There is a neighbourhood $U(P)$ of the point $P$, where the ample Cartier divisor $C'$ is defined by the element $f^d$. Since $a\in A_{nd}$, we have $a\in f^{-nd}(\co_{X,P})$, thus $a|_{U(P)}\in \Gamma (U(P), \co_X(nC'))$. Now we have the following commutative diagram:
$$
\begin{array}{cccc}
&a& \hookrightarrow & H^0(C, \co_X(mC')/\co_X(nC'))\\
&\downarrow & & \downarrow \\
0\rightarrow \Gamma (U(P), \co_X(nC')) \rightarrow & \Gamma (U(P),\co_X(mC')) & \stackrel{\alpha}{\rightarrow} & H^0(U(P)\cap C, \co_X(mC')/\co_X(nC'))
\end{array},
$$
where the vertical arrows are embeddings (the right vertical arrow is an embedding since $\co_X(mC')/\co_X(nC')\simeq \co_X/\co_X((n-m)C'))\otimes_{\co_X}\co_X(mC')$ and $(C, \co_X/\co_X((n-m)C'))$ is an irreducible scheme due to properties of the divisor $C$). 

But $\alpha (a)=0$, a contradiction. Thus, $a\in H^0(X,\co_X(nC'))$.
\end{proof}

\begin{lemma}
\label{claim3}
For a geometric data $(X,C,P,\cf ,\pi , \phi )$ of rank $r$ the corresponding ring $A$ satisfies the following property: there exists a constant $K\ge 0$ such that for all sufficiently big $n\ge 0$ and all $l\le nr-K$ the space $A_n$ contains an element $a$ with $\nu (a)= (-nr, l)$. 

In particular, the ring $A$ is strongly admissible with $N_A=r$. 
\end{lemma}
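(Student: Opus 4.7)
The plan is to construct, for each sufficiently large $n$ and each admissible $l$, a global section of $\co_X(nC)$ whose local expansion at $P$ begins with the monomial $g^l f^{-n}$; under $\chi_1$ this gives an element of $A_n$ whose valuation $\nu=(\nu_u,\nu_t)$ in $k[[u]]((t))$ has the prescribed leading form, namely $\nu_u=l$ and $\nu_t=-nr$. First I would set up the local picture at $P$, then invoke ampleness of $C'=dC$ to produce the lifts uniformly in $n$.

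Near the regular point $P$ the sheaf $\co_X(nC)$ is a line bundle, since $C$ is locally principal at $P$ with generator $f$. By definition~\ref{geomdata}, item~\ref{dat4}, one has $\pi(f)=t^r\cdot u_f$ and $\pi(g)=u\cdot u_g$ for units $u_f,u_g\in k[[u,t]]^{\times}$, whence a direct computation gives $\nu(\pi(f^{-n}g^l))=(l,-nr)$. Moreover the image $\pi(\cm_P^{l+1})\cdot\pi(f)^{-n}\subset k[[u,t]]$ is contained in elements of valuation $\ge(l+1,-nr)>(l,-nr)$ in the anti-lexicographical order (the anti-lex minimum of $(a,br)$ with $a+b=l+1$ is $(l+1,0)$, shifted by $(0,-nr)$). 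Consequently \emph{any} global lift $a\in H^0(X,\co_X(nC))$ of the class $f^{-n}g^l\in\co_X(nC)_P/\cm_P^{l+1}\co_X(nC)_P$ automatically satisfies $\nu(\chi_1(a))=(l,-nr)$, as required by the lemma.

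The existence of such a lift is controlled by the vanishing $H^1(X,\cm_P^{l+1}\co_X(nC))=0$. To obtain a single constant $K\ge 0$ that works for all large $n$ simultaneously, I would apply a Castelnuovo--Mumford / blow-up argument: let $\beta:\tilde X\to X$ be the blow-up of $P$ (legitimate since $P$ is regular on $X$), with exceptional divisor $E$. Then $H^0(\tilde X,\beta^*\co_X(nC)(-(l+1)E))=H^0(X,\cm_P^{l+1}\co_X(nC))$, and the divisor $\beta^*(nC)-(l+1)E$ has self-intersection $n^2(C)^2-(l+1)^2$. The intersection number $(C)^2$ is pinned down by comparing Riemann--Roch on $\cf$ with the isomorphism of definition~\ref{geomdata}, item~\ref{dat6}, forcing the divisor on $\tilde X$ to be ample in a range of the form $l+1\le nr-K$ for a suitable fixed $K$; Serre vanishing on $\tilde X$ then delivers the required $H^1$-vanishing on $X$. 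The main obstacle is exactly this uniformity: a direct invocation of Serre vanishing on $X$ only yields a bound $N(l)$ depending on $l$, and it is the blow-up input that converts the dependence into a linear one. One also has to verify that reflexiveness (rather than local freeness) of $\co_X(nC)$ for non-Cartier values does not derail the argument, but the possibly non-Cartier locus lies away from the regular point $P$ where all the local computations take place, and global ampleness of $C'$ is what powers the Serre step.

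Granted the main body of the lemma, the ``in particular'' is immediate. Applying it with $l=0$ to two consecutive large integers $n$ and $n+1$, one obtains elements $a,b\in A$ with $\nu(a)=(0,-nr)$ and $\nu(b)=(0,-(n+1)r)$; thus $N_A\mid\gcd(nr,(n+1)r)=r$. Combined with the inequality $N_A\ge r$ recorded in the text preceding the lemma, this forces $N_A=\tilde N_A=r$, proving strong admissibility with $N_A=r$.
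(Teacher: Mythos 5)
Your valuation computation at $P$ is fine: any $a\in H^0(X,\co_X(nC))$ whose class in $\co_X(nC)_P/\cm_P^{l+1}\co_X(nC)_P$ equals $f^{-n}g^l$ indeed has $\nu_t(a)=-nr$ and $\nu_u(\bar a)=l$. The gap is in producing the lift. Surjectivity of $H^0(X,\co_X(nC))\to \co_X(nC)_P/\cm_P^{l+1}\co_X(nC)_P$ imposes $\binom{l+2}{2}$ conditions, which for $l$ near $nr$ is $\sim n^2r^2/2$. But global sections inject via $\chi_1$ into $A_n$, and the constraints $\Sup(A)\subset\langle u^it^{-j}\mid 0\le i\le j\rangle$ together with $\nu_t(A)\subset r\dz$ (since $A\subset k[[u']]((t'))$ with $\nu(t')=(0,r)$) give $\dim_k(A_m/A_{m-1})\le mr+1$, hence $h^0(X,\co_X(nC))\le \dim_k A_n\le r\binom{n+1}{2}+n+1\sim n^2r/2$. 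So for every $r\ge 2$ the full jet map cannot be surjective in the range $l\le nr-K$, and the group $H^1(X,\cm_P^{l+1}\co_X(nC))$ you want to kill is nonzero. The same count shows $C^2\le r<r^2$, so your divisor $\beta^*(nC)-(l+1)E$ has self-intersection $n^2C^2-(l+1)^2<0$ for $l$ close to $nr$ and is not ample; even when $r=1$ its ampleness in the full range $l+1\le n-K$ amounts to the Seshadri constant of $C$ at $P$ being maximal, which is not automatic. So the blow-up/Serre-vanishing step fails, and with it the uniform constant $K$.

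The source of the problem is that you prescribe the entire two-dimensional $(l+1)$-jet at $P$, whereas only the class of $f^na$ modulo $(u^{l+1})+(t)$ — that is, the order of vanishing at $P$ of the restriction to $C$, which is $l$ conditions rather than $\binom{l+2}{2}$ — is relevant to $\nu_u(\bar a)$. This is exactly where the paper's proof diverges: it identifies $A_{nd}/A_{nd-1}$ with $H^0(C,\co_C(nC'))$ (and the deeper graded pieces with $H^0(C,\ci^k(nC')|_C)$) and then runs Riemann--Roch on the one-dimensional scheme $C$, where the eventual constancy of the jumps $\dim H^0(C,\cf_n(qd'P))-\dim H^0(C,\cf_n((q-1)d'P))=d'$ forces every value of $\nu_u$ up to $ndr-K$ to be attained, with $K$ uniform in $n$. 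If you replace your jet-lifting step by this restriction-to-$C$ argument, the remainder of your outline — in particular the deduction of $N_A=\tilde N_A=r$ from the cases $l=0,1$ combined with $r\mid N_A$ — goes through.
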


\begin{proof} As it follows from lemma \ref{claim1}, we have $X\simeq \Proj \bigoplus\limits_{n=0}^{\infty} A_{nd}$ (cf. \cite[lemma 9]{Pa}). Thus, the ring $\tilde{A}^{(d)}=\bigoplus\limits_{n=0}^{\infty} A_{nd}$ is a finitely generated $k$-algebra (cf. \cite[Corol. 10.3]{Za}). 
Then the ring $\tilde{A}=\bigoplus\limits_{n=0}^{\infty} A_{n}$ is finitely generated over $k$,  
since $\tilde{A}=\bigoplus\limits_{l=0}^{d-1}\tilde{A}^{(d,l)}$, where  the modules $\tilde{A}^{(d,l)} = \bigoplus\limits_{i=0}^{\infty} A_{di+l}$, $0<l<d$ are naturally isomorphic to the ideals in $\tilde{A}^{(d)}$, which are finitely generated.

We have 
$$\Proj (\tilde{A}(-1))\simeq \Proj (\tilde{A}^{d,-1}) \mbox{ by \cite[prop.2.4.7]{EGAII}, } \Proj (\tilde{A}^{d,-1}(n))\simeq (\Proj (\tilde{A}^{d,-1}))(nC')$$ 
(see \cite[ch.II,prop.5.12]{Ha}). Thus for all big $n$ $H^0(X, (\Proj (\tilde{A}(-1)))(nC'))\simeq A_{nd-1}$ (cf. \cite[ch.II, ex.5.9]{Ha}; the arguments from the proof of lemma \ref{kogomologii} show that $H^0(X, \Proj (\tilde{A}^{d,-1}(n))= A_{nd-1}$). 
Note that the sheaf $\Proj (\tilde{A}(-1))$ is the ideal sheaf $\ci$ of the divisor $C$ (one can argue as in the proof of lemma \ref{qkartier} and/or note that the localization of the ideal $I=\tilde{A}(-1)$ with respect to any element $a\in A_{n}$ with $\nu_t(a)=-rn$ (so, $a\notin \tilde{A}(-1)$) coincide with the ideal of the valuation $\nu_t$ in the ring $\tilde{A}_{(a)}$). Thus, for all big $n$ we have $H^0(C, \co_C(nC'))\simeq A_{nd}/A_{nd-1}$ and we have the natural embeddings 
\begin{multline}
 H^0(C, \co_C(nC'))\hookrightarrow \co_C(nC')_P,\\
\varphi_n: \co_C(nC')_P\simeq \co_X(nC')_P/\ci (nC')_P \stackrel{f^{nd}}{\hookrightarrow } \co_{X,P}/\ci_P =\\
 =\co_{X,P}/(f)\simeq \co_{C,P} \hookrightarrow k[[u,t]]/(t)\simeq k[[u]]
\end{multline}
such that the image of $H^0(C, \co_C(nC'))$ in $k[[u,t]]/(t)$ coincide with the image of the map \\
$A_{nd}/A_{nd-1}\stackrel{f^{nd}}{\hookrightarrow } k[[u,t]]/(t)$.  

On the other hand, for the sheaf $\cf_n=\co_C(nC')$ we have analogous construction of a subspace $W_n$ in $k((u))$ coming from one-dimensional Krichever correspondence (cf. \cite{Pa}). Namely, for each $q\ge 0$ we have natural embeddings 
$$
H^0(C, \cf_n(qP))\hookrightarrow \cf_n(qP)_P\simeq g^{-q}(\cf_{n,P})\hookrightarrow k((u)), 
$$
where the last embedding is the embedding 
$$g^{-q}\cf_{n,P}\stackrel{\varphi_n}{\hookrightarrow} g^{-q}k[[u]]=u^{-q}k[[u]] \hookrightarrow k((u))$$ 
(cf. definition \ref{geomdata}, item \ref{dat4}; we identify here the element $g$ from definition and its image in $k[[u]]$). Hence we have the embedding (cf. definition \ref{mapxi}) 
$
H^0(C\backslash P, \cf_n)\hookrightarrow k((u)),
$ 
whose image we denote by $W_n$. 
If $d'P$ is a very ample Cartier divisor, then arguing as in lemma \ref{claim1} we get $H^0(C, \cf_n(qd'P))\simeq W_{n,qd'}$, where $W_{n, qd'}= W_n \cap u^{-qd'}k[[u]]$. For big $n$ by the Riemann-Roch theorem for curves we get $\dim_k(H^0(C, \cf_n(qd'P)))-\dim_k(H^0(C, \cf_n((q-1)d'P)))= d'$ for all $q\ge 0$. Thus, $\dim_k (W_{n,qd'}/W_{n,(q-1)d'})=d'$ and therefore the space $W_n$ contain an element with any given negative value of the valuation $\nu_u$. 

Now consider the sheaf ${\cf'}_n=\cf_n(-d'P)$. Then for each $q\ge 0$ we have natural embeddings 
$$
H^0(C, {\cf'}_n(qP))\hookrightarrow {\cf'}_n(qP)_P\simeq g^{-q}({\cf'}_{n,P})\hookrightarrow k((u)), 
$$
where the last embedding is the embedding $g^{-q}{\cf'}_{n,P}\simeq g^{-q+d'}\cf_{n,P}\stackrel{g^{-d'}\varphi_n}{\hookrightarrow} u^{-q}k[[u]] \hookrightarrow k((u))$. Hence we have the embedding  
$
H^0(C\backslash P, {\cf'}_n)\hookrightarrow k((u)),
$ 
whose image ${W'}_n=g^{-d'}W_n$. Again by the Riemann-Roch theorem we obtain that for sufficiently big $n$ the space ${W'}_n$ contains elements of any given negative value of the valuation $\nu_u$. Moreover, it follows that there exists a constant $K\ge 0$ such that for all sufficiently big $n$ the space $W_n$ contains elements of any given value $l$ of the valuation $\nu_u$ if $l\le ndr -K$ (because by definition \ref{geomdata}, item \ref{dat6} the space $W_n$ contains no elements with valuation greater than $ndr$). In particular, it follows that the space $A_{nd}$ contains elements of any given value $(-ndr,l)$ of the valuation $\nu$ if $l\le ndr -K$. Thus, the ring $A$ is admissible. 

Now we can repeat all arguments above for the sheaf $\ci (nC')|_C$. Note that $H^0(C,  \ci (nC')|_C)\simeq A_{nd-1}/A_{nd-2}$, and the image of the embedding $H^0(C, \ci (nC')|_C)\hookrightarrow k[[u,t]]/(t)$ is $f^{nd-1}(A_{nd-1}) \mod (t)$. Therefore, for sufficiently big $n$ the space $A_{nd-1}$ contains elements of any given value $(-(nd-1)r,l)$ of the valuation $\nu$ if $l\le (nd-1)r -K$. Thus, $N_A=r$ and  the ring $A$ is strongly admissible, because $\tilde{N}_A | N_A$ and $\tilde{N}_A\ge r$. 

Continuing this line of reasoning, one can obtain that for sufficiently big $n$ each space $A_n$ contains elements of any given value $(-nr,l)$ of the valuation $\nu$ if $l\le nr -K$. 
\end{proof}

\begin{lemma}
\label{claim2}
Let $(A,W)$ be a Schur pair of rank $r$. Then $\tilde{A}=\bigoplus\limits_{n=0}^{\infty}A_n$ and $\gr (A)=\bigoplus\limits_{n=0}^{\infty}A_n/A_{n-1}$ are finitely generated $k$-algebras (cf. lemma \ref{qkartier}).
\end{lemma}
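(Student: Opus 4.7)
The plan is to reduce to the case $N_A = 1$ via lemma \ref{rcase}, to bound $\dim_k(A_n/A_{n-1})$ using the $A$-action on $W$, and then to deduce finite generation of $\gr A$ from the Noetherianity of a weighted polynomial ring; finite generation of $\tilde A$ will then follow by the lifting argument already recalled in the proof of lemma \ref{qkartier}.

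First, by lemma \ref{rcase}, I would pass to new uniformizers $u',t' \in k[[u]]((t))$ with $A \subset k[[u']]((t'))$ and with the invariant $N_A$ equal to $1$ in the new ring; since the filtration $A_n = A \cap (t')^{-n}k[[u']][[t']]$ coincides (after renaming $u' \to u$, $t' \to t$) with the one in the statement, I may assume without loss of generality that $N_A = 1$. Next, I would show each $A_n/A_{n-1}$ is finite-dimensional. The map $\mu \colon A \to W$, $a \mapsto a \cdot w_{0,0}$, is $k$-linear and injective (since $k[[u]]((t))$ is a domain and $w_{0,0} \ne 0$), and it is strictly filtration-preserving: $\nu_t(w_{0,0}) = 0$ gives $\mu(A_n) \subset W_n$, and conversely $\mu(a) \in W_{n-1}$ forces $\nu_t(a) \ge -(n-1)$, hence $a \in A_{n-1}$. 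Passing to associated gradeds yields injections $A_n/A_{n-1} \hookrightarrow W_n/W_{n-1}$; since the explicit basis $\{w_{i,j} : 0 \le i \le j \le n\}$ of $W_n$ gives $\dim_k W_n/W_{n-1} = n+1$, I conclude $\dim_k A_n/A_{n-1} \le n+1 < \infty$.

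Now I would establish finite generation of $\gr A$. Pick generators $b_1,\ldots,b_m$ of $A$ over $k$ with $b_i \in A_{n_i}$ and consider the surjection $\phi \colon R = k[X_1,\ldots,X_m] \twoheadrightarrow A$ with weighted grading $\deg X_i := n_i$; the map preserves filtrations. If the induced map $\gr \phi \colon R \to \gr A$ is surjective, then $\gr A \cong R/\ker(\gr \phi)$ is a graded quotient of the Noetherian ring $R$ by a graded (hence finitely generated) ideal, so $\gr A$ is finitely generated over $k$. If $\gr \phi$ fails to be surjective, I would augment the generating set of $A$ by lifts of a $k$-basis of $\bigoplus_{n \le n_0} A_n/A_{n-1}$ (finite by the dimension bound) for a suitable $n_0$. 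The main technical obstacle is to justify the existence of such a finite $n_0$: one must show that cancellations of leading $t$-terms in polynomial expressions for elements of $A$ are governed by relations of bounded weighted degree. Intuitively this follows from the combination of finite generation of $A$, the linear growth $\dim_k A_n/A_{n-1} \le n+1$, and $\trdeg_k A = 2$, but the precise argument requires careful book-keeping of how an element of high polynomial degree mapping to $A_n$ can be rewritten modulo cancellations coming from finitely many initial relations.

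Finally, finite generation of $\tilde A$ follows from that of $\gr A$ by the lifting argument of lemma \ref{qkartier}: let $\bar y_1,\ldots,\bar y_L$ generate $\gr A$ with $\bar y_i \in \gr A_{d_i}$, lift each to $y_i \in A_{d_i}$, set $\tilde y_i := y_i \cdot s^{d_i} \in \tilde A_{d_i}$, and adjoin $\theta := 1 \cdot s \in \tilde A_1$. By induction on grade $n$, for any $a s^n \in \tilde A_n$ the class $[a] \in \gr A_n$ can be written as a polynomial in the $\bar y_i$; subtracting the corresponding polynomial in the $\tilde y_i$ reduces $a s^n$ to an element of $\theta \cdot \tilde A_{n-1}$, and the inductive hypothesis closes the argument.
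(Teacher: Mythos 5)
Your proposal has a genuine gap at its central step, and you acknowledge it yourself: you never actually prove that $\gr A$ is finitely generated, only that it \emph{should} follow from finite generation of $A$, the linear bound $\dim_k A_n/A_{n-1}\le n+1$, and $\trdeg_k\Quot(A)=2$ ``after careful book-keeping.'' This is precisely the point that cannot be waved away. The filtration $A_n=A\cap t^{-n}k[[u]][[t]]$ is a valuation filtration, and the associated graded of a finitely generated domain with respect to a valuation filtration need not be finitely generated (value semigroups of rank-one valuations on two-dimensional rings can fail to be finitely generated); neither a linear growth bound on the graded pieces nor the transcendence degree rules this out. Your iterative scheme of adjoining lifts of bases of $\bigoplus_{n\le n_0}A_n/A_{n-1}$ has no a priori reason to terminate at a finite $n_0$, so the argument as written does not close. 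The correct steps you do supply --- the strict, injective filtered map $a\mapsto a\cdot w_{0,0}$ into $W$ giving $\dim_k A_n/A_{n-1}\le n+1$, and the lifting argument $\gr A$ f.g. $\Rightarrow\tilde A$ f.g. via adjoining $1_1$ --- are fine (the latter is exactly the parenthetical remark in the proof of lemma \ref{qkartier}), but they surround the hole rather than fill it.

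The paper closes this gap by a geometric detour that supplies the missing input, namely ampleness. It takes the finitely generated subalgebra $\tilde A_1=k[1_1,t_{1,s_1},\dots,t_{m,s_m}]\subset\tilde A$ generated by homogeneous lifts of the algebra generators of $A$ (finite generation of $A$ is part of definition \ref{schurdata}), forms the projective surface $X=\Proj\tilde A_1$ with its ample $\dq$-Cartier divisor $C$ via lemma \ref{qkartier}, observes that $H^0(X\backslash C,\co_X)\simeq(\tilde A_1)_{(1_1)}\simeq A$, and then uses lemma \ref{claim1} to identify the \emph{full} filtration pieces $A_{nd}$ with $H^0(X,\co_X(nC'))$. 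Zariski's theorem on section rings of ample divisors then gives finite generation of $\tilde A^{(d)}=\bigoplus_n A_{nd}$, hence of $\tilde A$, and finally $\gr A\simeq\tilde A/(1_1)$ is finitely generated as a quotient. Note also that the paper's order of deduction is the reverse of yours: $\tilde A$ first, $\gr A$ as a corollary. If you want to salvage your outline, you must replace the hand-waved step by this (or an equivalent) appeal to projectivity and ampleness; your reduction to $N_A=1$ via lemma \ref{rcase} is harmless but not needed for this.
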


\begin{proof} Let $A$ be generated by the elements $t_1,\ldots ,t_m$ as a $k$-algebra. Denote by $t_{1,s_1}, \ldots ,t_{m,s_m}$ the corresponding homogeneous elements in $\tilde{A}$, where for each $i$ $s_i$ means the minimal number such that $t_i\in A(-s_i, 1)$. Without loss of generality we can assume that generators contain elements $a,b$ with $GCD (\nu_t(a), \nu_t(b))=r$, $\nu (a)=(0,\nu_t(a))$, $\nu (b)=(0,\nu_t(b))$, and an element $c$ with $\nu (c)=(1,*)$ (because $A$ is a strongly admissible ring).

Consider the finitely generated $k$-subalgebra $\tilde{A}_1=k[1_1, t_{1,s_1},\ldots ,t_{m,s_m}] \subset \tilde{A}$ (here we denote by $1_1$ the element $1\in A(-1,1)$).  Arguing as in the proof of lemma \ref{qkartier} and proposition \ref{puchok}, we can construct a geometric data $(X,C,P,\cf ,\pi , \phi )$ of rank $r$ from definition \ref{geomdata}. Note that $H^0(X\backslash C, \co_X)\simeq (\tilde{A}_1)_{(1_1)}\simeq A$. Thus, the space constructed by the data in definition \ref{mapxi} will coincide with $A$. Then by lemma \ref{claim1} $H^0(X, \co_X(nC'))\simeq A_{nd}$, where $C'=dC$ is an ample Cartier divisor. Therefore, the ring $\tilde{A}^{(d)}$ is a finitely generated $k$-algebra (see e.g. \cite[corol. 10.3]{Za}). Hence $\tilde{A}$ is a finitely generated $k$-algebra (cf. the beginning of the proof of lemma \ref{claim3}). The algebra $\gr (A)$ is finitely generated because $\gr (A)\simeq \tilde{A}/(1_1)$. 
\end{proof}

\begin{defin}
\label{functor}
We define a map $\chi :Ob(\cq )\rightarrow Ob(\cs )$ as follows.

If $q=(X,C,P,\cf ,\pi , \phi ) \in Ob (\cq )$ is an element of $\cq_r$, then we define  
$$\chi (q)=(\chi_1 (H^0(X\backslash C, \co_X)), \chi_1 (H^0(X\backslash C, \cf )))\in \cs_r.$$
As it follows from remarks above and lemma \ref{claim3}, $\chi (q)$ is a Schur pair of rank $r$. 
\end{defin}

The following lemma will be needed to prove equivalence of the categories $\cq$ and $\cs$. 

\begin{lemma}
\label{vspomog}
Let $u',v'\in k[[u,t]]$ be monic elements with $\nu (u')=(1,0)$, $\nu (v')=(0,1)$. Then there exists an admissible operator $T\in \Adm_{\alpha }$ such that $T^{-1}uT=u'$, $T^{-1}vT=v'$. 
\end{lemma}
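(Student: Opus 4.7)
The plan is to translate the problem via the identification $\psi_1$ of corollary \ref{trick1}, under which $u, t \in k[[u,t]]$ correspond to the operators $\partial_1\partial_2^{-1}, \partial_2^{-1}$ in $\Pi_1 \cap k[\partial_1]((\partial_2^{-1}))$. Writing $P := \psi_1^{-1}(u')$ and $Q := \psi_1^{-1}(v')$, I set $L_1 := PQ^{-1}$ and $L_2 := Q^{-1}$ so that the required identities $T^{-1}uT = u'$, $T^{-1}tT = v'$ become $T^{-1}\partial_1 T = L_1$, $T^{-1}\partial_2 T = L_2$. A direct check verifies that $L_1, L_2 \in \hat{E}_+$ are commuting monic almost normalized operators with $\ord_{\Gamma}(L_1) = (1,0)$, $\ord_{\Gamma}(L_2) = (0,1)$, and satisfy the condition $A_1$, using closure of $\Pi_1$ under the relevant operations (lemma \ref{lemma9}, corollary \ref{corol1}).

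Applying lemma \ref{lemma8} to $L_1, L_2$ produces $S$ satisfying $A_1$ with $S^{-1}\partial_1 S = L_1$ and $S^{-1}L_{20} S = L_2$, where $L_{20} = \partial_2 + u_0$ and $u_0$ is the $\partial_2^0$-coefficient of $L_2$. From $QL_2 = 1$ together with the explicit low-order structure of $Q$ (namely $Q = \partial_2^{-1} + (c_{11}\partial_1 + c_{20})\partial_2^{-2} + \ldots$, with $c_{ij}$ the coefficients of $v'$), one computes $u_0 = -(c_{11}\partial_1 + c_{20}) \in k[\partial_1]$ of degree at most $1$. Define $T' := \exp(u_0 x_2)$: since $u_0$ commutes with $x_2$ and $[\partial_2, u_0 x_2] = u_0$, a direct computation yields $T'^{-1}\partial_2 T' = L_{20}$ and $T'^{-1}\partial_1 T' = \partial_1$; the bound $\deg_{\partial_1} u_0 \le 1$ ensures $T' \in \hat{D}_1$ satisfies $A_1$, and $T'$ is admissible since $T'\partial_1 T'^{-1} = \partial_1$ and $T'\partial_2 T'^{-1} = \partial_2 - u_0$ both lie in $k[\partial_1]((\partial_2^{-1}))$. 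Setting $T := T'S$, composition gives $T^{-1}\partial_i T = L_i$, hence $T^{-1}uT = L_1 L_2^{-1} = P$ and $T^{-1}tT = L_2^{-1} = Q$, i.e., $u'$ and $v'$ respectively under $\psi_1$; lemma \ref{lemma9} then yields $T \in \Pi_1$.

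The main obstacle is to verify admissibility of $T$, i.e., $T\partial_i T^{-1} \in k[\partial_1]((\partial_2^{-1}))$ for $i = 1, 2$. Since $T'$ is already admissible, the question reduces to the corresponding statement for $S$. I expect this to follow from a careful analysis of the inductive construction $S = \prod_{k \ge 1}(1 + s_k\partial_2^{-k})$ from the proof of lemma \ref{lemma8}: because in our situation the data $L_1, L_2$ have all coefficients in $k[\partial_1]$, the right-hand sides $v_k, u_k$ of equations (\ref{sv2}) determining $s_k$ lie in $k[\partial_1]$, and the explicit solutions (\ref{s_k}) have restricted $x_1, x_2$-dependence. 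By induction on $k$, one tracks that each partial product conjugates $\partial_i$ into $k[\partial_1]((\partial_2^{-1}))$, giving $T \in \Adm_1$ in the limit.
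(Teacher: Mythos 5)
Your overall architecture --- pass to $P=\psi_1^{-1}(u')$, $Q=\psi_1^{-1}(v')$, set $L_1=PQ^{-1}$, $L_2=Q^{-1}$, feed these to lemma \ref{lemma8}, and strip off the zeroth-order coefficient of $L_2$ by an exponential --- is exactly the route the paper intends (its one-line proof cites lemma \ref{lemma8}, item \ref{4}, together with the normalization lemma \ref{lemma7}, item \ref{3b}). But there are two genuine gaps. First, the ``direct check'' that $L_1$ is almost normalized fails in general: the $\partial_2^0$-coefficient of $L_1=PQ^{-1}$ is $\partial_1+c_{01}$, where $c_{01}$ is the coefficient of $t$ in $u'$ (nothing in $\nu(u')=(1,0)$ forbids a $t$-term; for $u'=u+t$, $v'=t$ one gets $L_1=\partial_1+1$, $L_2=\partial_2$). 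So lemma \ref{lemma8} does not apply as stated, and you need one more conjugation, by $\exp(c_{01}x_1)$, symmetric to your treatment of $u_0$; this is precisely the first normalization step in the proof of lemma \ref{lemma7}, which is why the paper cites that lemma.

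Second, and more seriously, the admissibility of $T$ is only asserted, and the inductive scheme you sketch would not go through. Admissibility concerns $T\partial_iT^{-1}$, the conjugation opposite to the one the construction controls ($T^{-1}\partial_iT=L_i$), and the partial conjugates $S_n^{-1}\cdots S_1^{-1}\partial_1S_1\cdots S_n$ are not individually constant-coefficient: already $S_1^{-1}\partial_1S_1=\partial_1-v_1\partial_2^{-1}+s_1v_1\partial_2^{-2}-\cdots$ picks up the non-constant coefficient $s_1$ (which is linear in $x_1,x_2$) at order $\partial_2^{-2}$. So ``tracking that each partial product conjugates $\partial_i$ into $k[\partial_1]((\partial_2^{-1}))$'' cannot be carried out; only the full limit has this property. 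The argument that actually works is global: since $u',v'$ are monic with $\nu(u')=(1,0)$, $\nu(v')=(0,1)$, the successive-approximation argument of lemma \ref{rcase} gives $k[[u',v']]=k[[u,t]]$, so $u$ and $t$, and hence $\partial_1=\psi_1^{-1}(ut^{-1})$ and $\partial_2=\psi_1^{-1}(t^{-1})$, are convergent constant-coefficient series in $L_1$ and $L_2^{\pm 1}$; conjugating these series term by term by $T$ sends $L_i$ to $\partial_i$ and exhibits $T\partial_iT^{-1}$ as a constant-coefficient series in $\partial_1,\partial_2^{\pm 1}$, i.e.\ an element of $k[\partial_1]((\partial_2^{-1}))$. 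With these two repairs your proof is complete and coincides with the paper's intended one.
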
 

This is an easy consequence of lemma \ref{lemma8}, \ref{4} and lemma \ref{lemma7}, \ref{3b}. 

Recall that for a given category $\Upsilon$ by $\Upsilon^{op}$ we denote the category with the same objects but with inverse arrows. 

\begin{theo}
\label{dannye}
The map $\chi$ from definition \ref{functor} induces a contravariant functor 
$$
\chi :\cq \rightarrow \cs^{op} 
$$    
which makes these categories equivalent. 
\end{theo}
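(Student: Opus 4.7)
My plan is to follow the template of Mulase's equivalence in one variable (Theorem~4.6 in \cite{Mu}), organizing the proof into three main tasks: constructing a quasi-inverse $\mu:\cs^{op}\to \cq$ on objects, verifying that $\chi$ and $\mu$ are mutually essentially inverse on objects, and extending both to functors that are compatible with morphisms.

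First, I would define the candidate quasi-inverse $\mu$ on objects. Given a Schur pair $(A,W)\in \cs_r$, Lemma~\ref{claim2} gives that $\tilde A=\bigoplus_n A_n$ is finitely generated over $k$, so by Lemma~\ref{qkartier} one obtains the surface $X=\Proj \tilde A$, the ample $\dq$-Cartier divisor $C$ cut out by the homogeneous prime ideal $I=\tilde A(-1)$, and the regular closed point $P$ as the center of the valuation $\nu$. Lemma~\ref{rcase} furnishes monic parameters $u',t'$ with $\nu(u')=(1,0)$, $\nu(t')=(0,r)$, and the argument in the proof of Lemma~\ref{qkartier}\ref{q2} then yields the identification $\widehat\co_P\simeq k[[u',t']]\hookrightarrow k[[u,t]]$, which provides $\pi$. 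The torsion-free sheaf $\cf=\Proj\tilde W$ and the embedding $\phi:\cf_P\hookrightarrow k[[u,t]]$ are given by Proposition~\ref{puchok}, whose item~\ref{puch2} is exactly condition~\ref{dat6} of Definition~\ref{geomdata}. Set $\mu(A,W)=(X,C,P,\cf,\pi,\phi)$.

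Next I would check $\mu\circ\chi\cong \mathrm{id}_{\cq}$ and $\chi\circ\mu\cong \mathrm{id}_{\cs^{op}}$ at the level of objects. In one direction, starting from $q=(X,C,P,\cf,\pi,\phi)$, Lemma~\ref{claim1} gives $H^0(X,\co_X(nC'))\simeq A_{nd}$, hence $X\simeq \Proj \tilde A^{(d)}\simeq \Proj \tilde A$; the divisor $C$ is recovered as the zero locus of $1_1\in \tilde A_1$ (equivalently the center of $\nu_t$ on $X$), and the point $P$ as the center of $\nu$. The identical reasoning for $\tilde W$ using (the argument of) Lemma~\ref{kogomologii} shows $\cf\simeq \Proj \tilde W$ and matches both $\pi$ and $\phi$ with the natural embeddings into $k[[u,t]]$ (the parameters $u,t$ being pre-chosen). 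The reverse composition is an immediate consequence of Lemma~\ref{claim1} together with the defining property of $\phi$ in Definition~\ref{geomdata}\ref{dat6}.

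Now I would extend $\chi$ and $\mu$ to morphisms. For a morphism $(\beta,\psi):q_1\to q_2$ in $\cq$, the ring isomorphism $h$ of Definition~\ref{geomcategory}\ref{cat1.3} has $h(u)\equiv u$, $h(t)\equiv t$ modulo the appropriate ideals, so by Lemma~\ref{vspomog} (applied to $u'=h(u)$, $v'=h(t)$) there is an admissible operator $T\in \Adm$ realizing $h$ by conjugation on $k[[u,t]]$ after identifying it with $k[\partial_1]((\partial_2^{-1}))$ via $\psi_1$. Pulling back sections through $\beta$ and $\psi$ gives the twisted inclusions $T^{-1}\chi(q_2)_AT\hookrightarrow \chi(q_1)_A$ and $\chi(q_2)_WT\hookrightarrow \chi(q_1)_W$ required by Definition~\ref{schurcategory}. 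Conversely, given a morphism $T:(A_2,W_2)\to(A_1,W_1)$ in $\cs$, the inclusions $T^{-1}A_2T\subset A_1$ and $W_2T\subset W_1$ induce graded inclusions $\widetilde{T^{-1}A_2T}\hookrightarrow \tilde A_1$, $\widetilde{W_2T}\hookrightarrow \tilde W_1$ compatible with the filtrations, hence a morphism $\beta:X_1\to X_2$ of $\Proj$'s with $\beta(P_1)=P_2$ and a sheaf map $\psi:\cf_2\to\beta_*\cf_1$; the admissibility of $T$ together with Lemma~\ref{lemma9} guarantees that the induced automorphism of $k[[u,t]]$ has the form required by Definition~\ref{geomcategory}\ref{cat1.3}. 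Functoriality and the $(\beta,\psi)\leftrightarrow T$ bijection on morphism sets complete the equivalence.

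The main obstacle I anticipate is the morphism level, specifically the careful bookkeeping required to translate between an abstract ring automorphism $h$ of $k[[u,t]]$ and an admissible operator $T\in\Adm$ (and back), and verifying that the equivalence class of $T$ modulo $k[\partial_1]((L_{20}^{-1}))$-valued normalizers (cf.\ Lemma~\ref{lemma8}\ref{2}) is exactly what is needed so that the identifications $\chi(q)\cong \chi(q)$ arising from two different choices of $h$ give the same morphism in $\cs$. Once this rigidity statement is in place, all remaining verifications are straightforward applications of the lemmas already established.
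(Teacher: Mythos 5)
Your overall architecture is the same as the paper's: the object-level bijection $\cq_r\to\cs_r$ via Lemmas \ref{claim1}, \ref{claim2}, \ref{qkartier}, \ref{kogomologii} and Proposition \ref{puchok}, and the morphism-level translation via Lemma \ref{vspomog} and admissible operators. The object-level part of your proposal is essentially the paper's proof.

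At the morphism level, however, the step you write as ``there is an admissible operator $T$ realizing $h$ by conjugation \dots\ pulling back sections gives the twisted inclusions'' has a concrete gap, and it is exactly the point you defer to your final paragraph. A morphism in $\cq$ carries \emph{two} pieces of analytic data: the ring automorphism $h$ of Definition \ref{geomcategory}, item \ref{cat1.3}, and the $k[[u,t]]$-module isomorphism $\xi$ of item \ref{cat1.4}, which is multiplication by a unit $\xi(1)\in k[[u,t]]^*$. An operator $T_1$ obtained from Lemma \ref{vspomog} and normalized so that $1\cdot T_1=1$ satisfies $W_2T_1=h(W_2)$, and $h(W_2)$ need not lie in $W_1$ unless $\xi(1)=1$; the inclusion required by Definition \ref{schurcategory} is $\xi(W_2)=h(W_2)\cdot\xi(1)\subset W_1$. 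The paper closes this by setting $T=T_1T_2$ with $T_2=\psi_1^{-1}(\xi(1))$ a constant-coefficient $1$-admissible operator: conjugation by $T_2$ is trivial on any subset of $k[[u]]((t))$, so the $A$-inclusion is unaffected, while $1\cdot T=\xi(1)$ gives exactly $W_2T=\xi(W_2)\subset W_1$. Your intuition that the ambiguity of $T$ modulo $k[\partial_1]((L_{20}^{-1}))$ (Lemma \ref{lemma8}, item \ref{2}) is ``exactly what is needed'' is correct --- the constant-coefficient factor $T_2$ lives precisely in that ambiguity --- but you must exhibit the factorization $T=T_1T_2$ explicitly rather than assert it; otherwise the $W$-inclusion fails as stated. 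The converse direction (from $T$ to $(\beta,\psi)$) likewise requires reading off both $h$ (from $T^{-1}(\cdot)T$) and $\xi$ (from $1\cdot T$) separately, as the paper does.
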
 

\begin{proof} First let's show that the map $\chi$ induces a bijection $\chi_r: \cq_r \rightarrow \cs_r$. 

It will follow from lemma \ref{claim2}, lemma \ref{claim1}, proposition \ref{puchok}, lemma \ref{qkartier}, lemma \ref{kogomologii} and the following statement (cf. e.g. \cite[lemma 9]{Pa}). Suppose that $X$ is a projective scheme over a field, $\cf$ is a coherent sheaf on $X$, and $C'$ is an ample Cartier divisor on $X$. Then $X\simeq \Proj (S)$ and $\cf \simeq\Proj (F)$, where $S=\oplus_{m\ge 0}H^0(X, \co_X(mC'))$, $F=\oplus_{m\ge 0} H^0(X, \cf (mC'))$. 

Having this statement in mind, starting with geometric data $q=(X,C,P,\cf ,\pi , \phi )$ of rank $r$, we can reconstruct it from the Schur pair $\chi (q)= (A,W)$ of rank $r$ as follows. $X\simeq \Proj (\bigoplus\limits_{n=0}^{\infty} A_{nd})$ (see lemma \ref{claim1}), and $\Proj (\bigoplus\limits_{n=0}^{\infty} A_{nd})\simeq \Proj \tilde{A}$. The divisor $C$ and the point $P$ are uniquely reconstructible by the discrete valuation $\nu_t$ and the valuation $\nu$ on the ring $k[[u]]((t))$. By \cite[prop.2.6.5]{EGAII} the composition of canonical homomorphisms $\Gamma_*(\cf )\rightarrow \Gamma_*(\Proj(\Gamma_*(\cf ))) \rightarrow \Gamma_*(\cf )$ (for notation see loc. cit.) is the identity isomorphism. In particular, the homomorphism $\Gamma_*(\Proj(\Gamma_*(\cf ))) \rightarrow \Gamma_*(\cf )$ is surjective. By definition of geometric data $\Proj (\Gamma_*(\cf ))\simeq \Proj (\bigoplus\limits_{n=0}^{\infty}W(-ndr, 1))$  (and $\Proj (\bigoplus\limits_{n=0}^{\infty}W(-ndr, 1)) \simeq \Proj \tilde{W}$ by \cite[prop. 2.4.7]{EGAII}). By lemma \ref{kogomologii} $\Gamma_*(\Proj(\Gamma_*(\cf )))= \Gamma_*(\cf )$. Therefore, the canonical homomorphism $\Proj(\Gamma_*(\cf )) \rightarrow \cf$ must be an isomorphism (otherwise there exists $n\gg 0$ such that $H^0(X, \Proj(\Gamma_*(\cf (nC'))) \rightarrow H^0(X, \cf (nC'))$ is not an isomorphism). So, $\cf \simeq \Proj (\tilde{W})$. The homomorphisms $\pi$ and $\phi$ are naturally defined by the embedding of the subspaces $A,W$ in $k[[u]]((t))$.  
 
 Conversely, starting from a pair $(A,W)\in \cs_r$, by lemma \ref{claim2}, lemma \ref{qkartier}, proposition \ref{puchok} we can construct a geometric data $q\in \cq_r$. Applying to it the map $\chi$, we obtain the same pair (cf. the proof of lemma \ref{claim2}). 

Now let's show how to define the functor $\chi$ on the morphisms. Let's start with a morphism $(\beta, \psi ):q_1 \rightarrow q_2$ between two data. We have an automorphism $h: k[[u,t]]\rightarrow k[[u,t]]$ of definition \ref{geomcategory}, \ref{cat1.3}. Because of lemma \ref{vspomog}, there is an admissible operator $T_1\in \Adm_{1}$ such that 
$$
T_1^{-1}uT_1=h(u), \mbox{\quad} T_1^{-1}vT_1=h(v).
$$
Moreover, as follows from the proof of lemma \ref{lemma8}, we can find $T_1$ such that $1\cdot T_1=1$. 

The ring automorphism $h$ extends to a ring automorphism $h: k[[u]]((t))\rightarrow k[[u]]((t))$ in an obvious way. Thus 
$$
k[[u]]((t))\ni f(u,v) \mapsto f(h(u),h(v))=f(T_1^{-1}uT_1, T_1^{-1}vT_1)=T_1^{-1}f(u,v)T_1\in  k[[u]]((t)).
$$  
The $k[[u,t]]$-module isomorphism $\xi: k[[u,t]]\rightarrow h_*k[[u,t]]$ of definition \ref{geomcategory}, \ref{cat1.4} is given by the multiplication of a single invertible element $\xi\in k[[u,t]]^*$. It determines a $1$-admissible operator $T_2=\psi_1^{-1}(\xi )$ (see corollary \ref{trick1}). Since it is an operator having only constant coefficients, $T_2^{-1}AT_2=A$ for every subset $A\subset  k[[u]]((t))$. 

Now let $(A_i, W_i)=\chi (q_i)$, $i=1,2$.  Since we have from definitions \ref{mapxi} and \ref{geomcategory}, \ref{cat1.3} that 
$$
\begin{CD}
H^0(X_2\backslash C_2, \co_2) @>\beta^*>> H^0(X_1\backslash C_1, \co_1) \\
@VV\chi_2V @VV\chi_1V \\
k[[u]]((t)) @>h>> k[[u]]((t)), 
\end{CD}
$$
we obtain 
$$
T_1^{-1}T_2^{-1}A_2T_2T_1=T_1^{-1}A_2T_1=h(A_2)=h\chi_2(H^0(X_2\backslash C_2, \co_2))\subset \chi_1(H^0(X_1\backslash C_1, \co_1))=A_1.
$$
On the other hand, we have from definitions \ref{mapxi} and \ref{geomcategory}, \ref{cat1.4} that 
$$
\begin{CD}
H^0(X_2\backslash C_2, \cf_2) @>\widehat{\psi}>>H^0(X_2\backslash C_2, {\beta}_*\cf_1) = H^0(X_1\backslash C_1, \cf_1)\\
@VV\chi_2V @VV\chi_1V \\
k[[u]]((t)) @>\xi >> h_*(k[[u]]((t)))=k[[u]]((t)).
\end{CD}
$$
The isomorphism $\xi$ is completely determined by its image $\xi (1)=1\cdot T_2$. 
Every element of the $k[[u]]((t))$-module $k[[u]]((t))$ is of the form $a\cdot 1$, where $a\in k[[u]]((t))$. Hence
$$
\xi (a\cdot 1)=h(a)\cdot \xi (1)= \xi (1) T_1^{-1}aT_1.
$$
Therefore, we conclude that $\xi =T\eqdef T_1T_2$, because of the following consistency:
$$
\xi (a\cdot 1)=  1\cdot T_2\cdot T_1^{-1}aT_1=1\cdot T\cdot T^{-1}aT =aT.  
$$
Thus we have 
$$
W_2 T= \xi (\chi_2(H^0(X_2\backslash C_2, \cf_2)))\subset \chi_1(H^0(X_1\backslash C_1, \cf_1))=W_1.
$$
$T$ is a $1$-admissible operator and we have $T^{-1}A_2T\subset A_1$ and $W_2T\subset W_1$. 
Hence we have constructed a morphism 
$$\chi (\beta ,\psi ): (A_2,W_2) \rightarrow (A_1,W_1)$$
and our functor is defined. 

Let's show that $\chi$ gives an anti equivalence of categories. It is remain to construct an inverse functor on morphisms in $\cs$. 

Let $T: (A_2,W_2) \rightarrow (A_1,W_1)$ be a morphism between Schur pairs defined by an admissible operator $T\in \Adm_{1}$. It means that we have 
\begin{equation}
\label{incl}
T^{-1}A_2T\subset A_1 \mbox{\quad and \quad} W_2T\subset W_1.
\end{equation}
Let $X_i$ be the projective surface defined by $A_i$ and $\cf_i$ be the torsion free sheaf corresponding to $W_i$, $i=1,2$. Note that $W_1$ has a natural $T^{-1}A_2T$-module structure. Thus the inclusions (\ref{incl}) define a morphism (since conjugation and multiplication by $T$ preserves the filtration on $A_2$ and on $W_2$ and therefore an inclusion of graded rings and modules is defined) $\beta :X_1 \rightarrow X_2$ and a sheaf homomorphism $\psi :\cf_2 \rightarrow \beta_*\cf_1$. As it follows from the inclusion of graded rings, the properties \ref{cat1.1} and \ref{cat1.2} of definition \ref{geomcategory} for the morphism $\beta$ hold. 

Since $T$ is $1$-admissible, we have $T^{-1}k[[u,t]]T\simeq k[[u,t]]$, which gives an isomorphism $h: k[[u,t]] \rightarrow k[[u,t]]$. Moreover, $T$ gives an isomorphism between $k[[u]]((t))$-module $k[[u]]((t))$ and $T^{-1}k[[u]]((t))T$-module $k[[u]]((t))T$. Since $k[[u]]((t))$ is generated by the identity element $1$ as a $k[[u]]((t))$-module, $T:k[[u]]((t))\rightarrow k[[u]]((t))$ is determined by its image $\xi \eqdef 1\cdot T\in k[[u,t]]$. Then $\xi$ is an invertible element, $\xi\in k[[u,t]]^*$. Every element of $k[[u]]((t))$ is uniquely expressed as $a\cdot 1$, where $a\in k[[u]]((t))$. We have 
$$
T(a\cdot 1)=(1\cdot T)T^{-1}aT= h(a)\xi .
$$
It is easy to check that $h$ satisfies \ref{cat1.3} of definition \ref{geomcategory} and $\xi$ defines a $k[[u,t]]$-module isomorphism 
$$
\xi :k[[u,t]]\rightarrow k[[u,t]]
$$
which satisfies \ref{cat1.4} of definition \ref{geomcategory}. This completes the proof.  
\end{proof}

Denote the set of isomorphism classes of Schur pairs by $\cs /\Adm_{1}$ and denote the set of isomorphism classes of geometric data by $\cm$. By theorem \ref{dannye}, we obtain

\begin{corol}
\label{dannye1}
There is a natural bijection 
$$
\Phi : \cm \rightarrow \cs /{\Adm}_{1}.
$$   
\end{corol}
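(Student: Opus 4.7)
The plan is to extract the bijection from the anti-equivalence of categories established in Theorem \ref{dannye}. An equivalence (or anti-equivalence) of categories automatically induces a bijection on the sets of isomorphism classes of objects, so the only thing to check is that these sets of isomorphism classes coincide with $\cm$ and $\cs/\Adm_1$ respectively.

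First I would unwind the definition of isomorphism in each category. In $\cs$, the morphisms are precisely twisted inclusions induced by $1$-admissible operators $T \in \Adm_1$; a morphism $T:(A_2,W_2)\to(A_1,W_1)$ is an isomorphism iff the inclusions $T^{-1}A_2T\hookrightarrow A_1$ and $W_2T\hookrightarrow W_1$ are equalities. Thus two Schur pairs are isomorphic in $\cs$ if and only if they are conjugate by an element of $\Adm_1$, which is exactly the equivalence relation whose quotient gives $\cs/\Adm_1$. In $\cq$, the morphisms $(\beta,\psi)$ are isomorphisms iff $\beta$ is an isomorphism of surfaces restricting to an isomorphism of curves sending $P_1$ to $P_2$, and $\psi$ is a sheaf isomorphism, compatible with the coordinate data via $h$ and $\xi$; this is exactly the equivalence relation whose quotient gives $\cm$.

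Next I would invoke Theorem \ref{dannye}: the contravariant functor $\chi:\cq \to \cs^{op}$ is an equivalence of categories. A (contravariant) equivalence sends isomorphisms to isomorphisms and reflects them, and is essentially surjective on objects, so it descends to a bijection on the respective sets of isomorphism classes. Reversing arrows does not change isomorphism classes, so the induced map
$$
\Phi:\cm \longrightarrow \cs/\Adm_1
$$
is a bijection. The naturality of $\Phi$ follows from the explicit construction of $\chi$ on objects given in Definition \ref{functor}: a geometric datum $q$ is sent to the Schur pair $(\chi_1(H^0(X\backslash C,\co_X)),\chi_1(H^0(X\backslash C,\cf)))$, which depends only on the isomorphism class of $q$ in $\cq$.

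There is no real obstacle here; the whole content of the corollary is packaged into Theorem \ref{dannye}. The only point requiring a brief verification, which I would include as a short remark, is that the equivalence relation implicit in $\cs/\Adm_1$ (conjugation/twisting by an element of $\Adm_1$) really does agree with categorical isomorphism in $\cs$, but this is immediate from Definition \ref{schurcategory} together with the fact that $\Adm_1$ is closed under inverses (by Corollary \ref{corol1} applied to $1$-admissible operators).
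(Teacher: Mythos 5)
Your proposal is correct and follows essentially the same route as the paper: the corollary is obtained directly as the passage to isomorphism classes under the anti-equivalence of Theorem \ref{dannye}, with the only (easy) verification being that isomorphism in $\cs$ coincides with the $\Adm_1$-twisting relation and isomorphism in $\cq$ with the identification defining $\cm$. The paper states this in one line; your elaboration of the standard fact that an (anti-)equivalence induces a bijection on isomorphism classes is exactly what is implicitly being used.
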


Combining theorem \ref{schurpair} and theorem \ref{dannye}, we obtain

\begin{theo}
\label{dannye2}
There is a one to one correspondence between the set of classes of equivalent $1$-quasi elliptic strongly admissible finitely generated rings of operators in $\hat{D}$ (see definitions \ref{elliptic}, \ref{rings}, \ref{ggg1}) and the set of isomorphism classes of geometric data $\cm$ (see definitions \ref{geomdata}, \ref{geomcategory}). 
\end{theo}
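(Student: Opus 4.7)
The argument is obtained by chaining together Theorem \ref{schurpair} and Theorem \ref{dannye} (equivalently, Corollary \ref{dannye1}), so the plan is to verify that the two correspondences match on the distinguished subclass of objects singled out in the statement and that the two equivalence relations agree.

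Starting with a $1$-quasi elliptic strongly admissible finitely generated ring $B\subset\hat{D}$, Theorem \ref{schurpair} assigns to its equivalence class (in the sense of Definition \ref{rings}) a unique equivalence class of $1$-quasi elliptic Schur pairs $(A,W)$ in $k[z_1^{-1}]((z_2))$ with $\Sup(W)=\langle z_1^{-i}z_2^{-j}\mid i,j\ge 0\rangle$, where the equivalence is given by admissible operators (Definition \ref{admissible}). Applying the isomorphism $\psi_1$ from Corollary \ref{trick1}, I convert this into a pair $(\psi_1(A),\psi_1(W))$ inside $k[[u]]((t))$ with $\Sup(\psi_1(W))=\langle u^i t^{-j}\mid i,j\ge 0,\ i-j\le 0\rangle$. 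By Remark \ref{ooo} and Definition \ref{verygood1}, strong admissibility of $B$ transfers to strong admissibility of $\psi_1(A)$; by Definition \ref{ggg1} and the remarks following it, the number $N_A$ equals the intrinsic invariant $N_B$, which we call the rank $r$. Finite generation of $B$ as a $k$-algebra transfers to finite generation of $\psi_1(A)$, and $\trdeg(\Quot(\psi_1(A)))=2$ follows from the remarks in \ref{ooo1} together with the presence of an element of valuation $(1,\ast)$ (quasi ellipticity). Hence $(\psi_1(A),\psi_1(W))\in\cs_r$ in the sense of Definition \ref{schurdata}.

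Conversely, given a Schur pair $(A,W)\in\cs_r$ in the sense of Definition \ref{schurdata}, Remark \ref{ooo} shows $\psi_1^{-1}(A)$ is $1$-quasi elliptic, strongly admissible by Definition \ref{verygood1}, and finitely generated; so by Theorem \ref{schurpair} it produces a canonical equivalence class of $1$-quasi elliptic strongly admissible finitely generated rings $B\subset\hat{D}$. Moreover, the equivalence relation on Schur pairs from Definition \ref{admissible} is precisely transported by $\psi_1$ (see Definition \ref{transfer}) to the morphism relation in the category $\cs$ from Definition \ref{schurcategory}, whose isomorphism classes form $\cs/\Adm_1$. Thus Theorem \ref{schurpair} (together with Remark \ref{schurpair1} that extends it to arbitrary $1$-quasi elliptic pairs) furnishes a bijection between equivalence classes of rings $B$ as in the statement and the set $\cs/\Adm_1$ restricted to pairs of rank $r$ for some $r\in\dn$.

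The remaining step is to compose this with the bijection of Corollary \ref{dannye1}, namely $\Phi:\cm\to\cs/\Adm_1$, which is induced by the anti-equivalence of categories $\chi:\cq\to\cs^{op}$ from Theorem \ref{dannye}. Because $\chi$ sends objects of $\cq_r$ to objects of $\cs_r$ and vice versa (as verified in the first paragraph of the proof of Theorem \ref{dannye} via Lemmas \ref{claim1}, \ref{claim3}, \ref{claim2}, \ref{qkartier}, \ref{kogomologii}, and Proposition \ref{puchok}), every isomorphism class of geometric data in $\cm$ corresponds to a Schur pair satisfying precisely conditions (\ref{sdat1}) and (\ref{sdat2}) of Definition \ref{schurdata}. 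The main bookkeeping point, and the only place where anything nontrivial has to be checked, is that the admissible operators occurring as morphisms in $\cs$ coincide with those occurring in Theorem \ref{schurpair} when both are $1$-admissible; this is immediate from Definitions \ref{admissible} and \ref{transfer}. Composing the two bijections yields the desired one-to-one correspondence, completing the proof.
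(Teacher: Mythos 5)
Your proposal is correct and follows exactly the route the paper intends: the paper's own proof is literally the single line ``Combining theorem \ref{schurpair} and theorem \ref{dannye}, we obtain,'' and your write-up simply makes explicit the bookkeeping (transfer via $\psi_1$, matching of the equivalence relations, and the rank invariant) that this combination requires.
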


\begin{nt}
\label{zakl1}
A natural question arises: are a category of commutative algebras of operators and the category of Schur pairs equivalent? 

The answer is negative already in one-dimensional case, see \cite{Mu}, introduction. It is possible to define a category of commutative algebras of  operators in a natural way. But it does not become equivalent with the category of Schur pairs and the category of geometric data we have defined, since in the construction of a Schur pair by a ring of operators in theorem \ref{schurpair} we need to choose operators $L_1,L_2$, and by choosing other operators, we come to another Schur pair, which is isomorphic to the first one. 
\end{nt}

\begin{nt}
\label{zakl2}
It should be possible to extend the category of geometric data to include also schemes of non-finite type over $k$, and prove the equivalence of this category with an extended category of Schur pairs with the ring $A$ not finitely generated over $k$.
\end{nt}

\begin{nt}
\label{zakl3}
It would be interesting to find geometric conditions describing those geometric data that correspond to $1$-quasi-elliptic rings in the ring $D\subset \hat{D}$. See works \cite{ZhM}, \cite{Ku4}, where several results in this direction are obtained. 
\end{nt}

\begin{nt}
\label{BA}
One can also introduce for the ring $\hat{D}$ and for a surface from definition \ref{geomdata} a natural generalization of the notion of formal Baker-Akhieser module (cf. \cite[Introduction]{ZhM}) or of formal Baker-Akhieser functions as eigenvectors of a ring $B$ from theorem \ref{dannye2} (cf. \cite[\S 4]{Kr}), though it will be in general different from those considered in \cite{Kr} or \cite{ZhM}.  

Namely, consider the expression $e^{\varepsilon}=\exp (x_1z_1^{-1}+x_2z_2^{-1})$ and define the action 
$$
\partial_1(e^{\varepsilon})=z_1^{-1}e^{\varepsilon}, \mbox{\quad} \partial_2(e^{\varepsilon})=z_2^{-1}e^{\varepsilon},
$$
$$
\partial_1^{-1}(e^{\varepsilon})=z_1e^{\varepsilon}, \mbox{\quad} \partial_2^{-1}(e^{\varepsilon})=z_2e^{\varepsilon}. 
$$
Now let's define the $\hat{D}$-module $M=\hat{D}e^{\varepsilon}$. Let's call its elements as formal Baker-Akhieser (BA) functions. 

Let $B$, $P,Q,L_1,L_2$, $S$ be the ring and operators considered in section \ref{reduction}. Let's define the formal BA-function corresponding to $B$ as 
$$
\psi_B(x,z)=S^{-1}(e^{\varepsilon}).
$$
Then we have
$$
P\psi_B(x,z)=z_2^{-k}\psi_B(x,z), \mbox{\quad} Q\psi_B(x,z)=z_1^{-1}z_2^{1-l}\psi_B(x,z).
$$
Note that the eigenvalues are different from the symbols of operators even if $P,Q$ are partial differential operators as in \cite[\S 4]{Kr}. 

In general, for arbitrary element $b\in B$ we have $b\psi_B(x,z)=a\psi_B(x,z)$, where $a$ is a series in $z_1,z_2$. If we apply the change of variables $\psi_1$ from corollary \ref{trick1} to the element $a$, we obtain a series in $u,t$ which is a representation of the meromorphic function on the surface $X$ corresponding to the element $b$ in terms of local parameters of the point $P$ (see definition \ref{geomdata}). Thus, $M$ can be thought of as an analogue of the BA-module, and $\psi_1(\psi_B(x,z))$ can be thought of as an analogue of the BA-function from \cite[\S 4]{Kr}. 
\end{nt}

\section{Examples}
\label{examples}

As an advertisement of our constructions let's give  several examples of commuting operators in the ring $\hat{D}$ (for more details on calculations see \cite{Ku3}).

\begin{ex}
\label{ex1}
In one dimensional situation, using the Sato theorem, one can obtain old known example of Burchnall and Chaundy of commuting ordinary differential operators corresponding to a cuspidal curve, if we take $W=\langle 1+t, t^{-i}, i\ge 1\rangle$, $A=k[t^{-2}, t^{-3}]$:
$$
P= \partial_x^2 - 2(1-x)^{-2}, \mbox{\quad} Q=\partial_x^3-3(1-x)^{-2}\partial_x-3(1-x)^{-3}.
$$
\end{ex}

\begin{ex}
\label{advertisement}
Let's take a subspace $W=\langle 1+t, t^{-i}u^j, i\ge 1, 0\le j\le i \rangle \subset k[[u]]((t))$. One can easily check that its ring of stabilizers contains elements $t^{-2}, t^{-3}, ut^{-2}$. So, it is strongly admissible. The maximal ring of stabilizers will be infinitely generated over $k$. 
The Schur pair $(W,A)$ with a finitely generated ring $A$ containing the elements above corresponds to a geometric data with a surface being singular toric surface. 

The operators corresponding to the elements $t^{-2}, ut^{-2}$ in the ring of commuting  operators corresponding to $A$ (the operators satisfying the definition of quasi ellipticity, cf. also corollary \ref{corol3.1}) are
$$
P=\partial_2^2-2\frac{1}{(1-x_2)^2}(:\exp (-x_1\partial_1):),
$$
$$
Q=\partial_1\partial_2+\frac{1}{1-x_2}(:\exp (-x_1\partial_1):)\partial_1,
$$  
where $(:\exp (-x_1\partial_1):)=1-x_1\partial_1+x_1^2\partial_1^2/2!-x_1^3\partial_1^3/3!+\ldots$.  The operator corresponding to the element $t^{-3}$ is 
$$
P'=\partial_2^3-3\frac{1}{(1-x_2)^{2}}(:\exp (-x_1\partial_1):)\partial_2-3\frac{1}{(1-x_2)^{3}}(:\exp (-x_1\partial_1):).
$$
Thus, these operators  are very similar to the operators from previous example. This similarity goes further: if we derive equations of isospectral deformations of the operators above (cf. \cite[\S 4]{Mul} and \cite[\S 6]{Zhe}), we obtain the following equations of the corresponding Sato-Wilson system (cf. \cite[\S 4]{Zhe}):
\begin{equation}
\label{isospectral}
\frac{\partial s_1}{\partial t_1}= \frac{1}{4}(s_1)_{x_2x_2x_2}-\frac{3}{2}(s_1)_{x_2}^2, \mbox{\quad} \frac{\partial s_1}{\partial t_2}=-(s_1)_{x_2}(s_1)_{x_1}-\frac{1}{2}(s_1)_{x_2x_2}\partial_1, \mbox{\quad} 
\end{equation}
$$
\frac{\partial s_1}{\partial t_3}= -(s_1)_{x_1}^2-(s_1)_{x_1x_2}\partial_1 - (s_1)_{x_2}\partial_1^2,
$$  
where $s_1(t_1,t_2,t_3)=s_1(t)$ is the first coefficient of the operator $S(t)=1+s_1(t)\partial_2^{-1}+\ldots$, and $S(0)=S$ is the conjugating operator:  $W=W_0S$, $P=S\partial_2^2S^{-1}$.  Notably $s_1(0)= \frac{1}{1-x_2}(:\exp (-x_1\partial_1):)$ is a solution of the equations above. This corresponds to the following fact from one-dimensional KP theory: the function $u(x)=(x^{-1})_x$ is the rational solution of the KdV equation (and this function is the halved coefficient of the operator $P$ in example \ref{ex1}). 
\end{ex} 

\begin{nt}
\label{posl}
A simple analysis of equations (\ref{isospectral}) show that even if we start with a commutative ring of partial differential operators (what means that $s_1(0)\in k[[x_1,x_2]][\partial_1]=D_1$), the isospectral deformations will not be partial differential operators, but operators in $\hat{D}$, since $s_1(t)\notin D_1$ for general $t$. Thus, the ring $\hat{D}$ appears quite natural. This situation is similar to the problem of describing commutative rings of ordinary differential operators with polynomial coefficients (cf. \cite{Mir}, \cite{Mok} for explicit examples of such rings) in dimension one. In one dimensional KP theory, if we start with a commutative ring of ordinary differential operators with polynomial coefficients, its isospectral  deformations (which are connected with solutions of the KP equation) will consist of operators with not polynomial coefficients though they will still be ordinary differential operators. 
\end{nt}

\begin{ex}
\label{ex3}
In this example we show how already known examples of commuting partial differential operators corresponding to quantum Calogero-Moser system and rings of quasi-invariants (see \cite{Ch}) fit into our classification. 

Recall that the rings in these examples consist of operators commuting with Schr\''odinger operator $L=\partial_1^2+\partial_2^2-u(x_1,x_2)$, where $u$ is a function of special type given by explicit formulae in three cases: rational, trigonometric and elliptic. In all cases the rings of highest symbols of commuting operators are described (they are called as rings of quasi-invariants, see \cite{Ch}). Thus, the rings of quasi-invariants are $k$-subalgebras in the ring of polynomials (in two variables in our case). As it follows from definition and description of these rings in \cite{Ch}, the corresponding rings of commuting partial  differential operators satisfy assumptions of proposition \ref{chvar} and lemma \ref{lemma5}. Thus, after a linear change of variables they become a $1$-quasi elliptic strongly admissible rings (by proposition \ref{chvar}) and therefore correspond to $1$-quasi-elliptic Schur pairs. If the ring of quasi-invariants is finitely generated as a $k$-algebra (cf. proposition \ref{techn5.2}), then the ring of commuting differential operators corresponds to a Schur pair from definition \ref{schurdata} (by applying the map $\psi_1$ from corollary \ref{trick1} to the corresponding $1$-quasi elliptic Schur pair from theorem \ref{schurpair}) and therefore it also corresponds to a geometric data from definition \ref{geomdata} by theorem \ref{dannye}. 

For example, the operators 
$$
L_1=\partial_1+\partial_2, \mbox{\quad} L_2=\partial_1^2+\partial_2^2-m(m+1)\wp (x_1-x_2)
$$
that define a quantum Calogero-Moser system (here $\wp (z)$ is the Weierstrass function of a smooth elliptic curve), after applying the $k$-linear change of variables ${\partial'}_2=\partial_1+\partial_2$, ${\partial'}_1=\partial_1$, $x_2'=x_2$, $x_1'=x_1-x_2-c$, $c\in \dc$ become equal to 
$$
L_1={\partial'}_2, \mbox{\quad} L_2=2{\partial'}_1^2-2{\partial'}_1{\partial'}_2+{\partial'}_2^2-m(m+1)\wp (c+x_1').
$$ 
We choose a constant $c$ here in such a way that the Taylor series of the function $\wp (z)-z^{-2}$ in a neighbourhood of zero and all its derivatives converge at $z=c$. In this case we can represent $\wp (c+x_1')$ as a formal Taylor series belonging to $\dc [[x_1']]$. Note that any ring of commuting operators containing these operators contains also the operator $L_2'=L_2-L_1^2$ and $\ord_{\Gamma}(L_2')=(1,1)$, $\ord_{\Gamma}(L_1)=(0,1)$. 
Note that both operators $L_1,L_2'$ satisfy the condition $A_1$. 
Therefore, any ring $B$ of commuting operators containing these operators is $1$-quasi elliptic strongly admissible with $N_B=1$. We would like to emphasize that the projective surface $X$ in the geometric data corresponding to this commutative ring of partial differential operators is naturally isomorphic to the projectivization of the affine spectral variety defined by this ring (cf. \cite[rem.5.3]{BEGa}) offered by Krichever in \cite{Kr}. For further geometric properties of the surface $X$ as well as of the geometric data (corresponding to any commutative rings of partial differential operators or operators in $\hat{D}$) we refer to recent works \cite{ZhM}, \cite{Ku4}. 
\end{ex}

At the end we would like to prove one statement about geometric properties of the surface $X$ corresponding to a maximal commutative subring of partial differential operators. This statement recovers a number of results in works  \cite{FV}, \cite{EG}, \cite{BEG}, \cite{FV2} (cf. \cite[rem. 3.17]{Ch})   
claiming that the affine spectral varieties of commutative rings of partial differential operators corresponding to certain rings of quasi invariants  are Cohen-Macaulay. 

To formulate this statement recall one construction (without details) given in section 3.2 of \cite{Ku4}. For a given integral
two-dimensional scheme $X$ of finite type over a field $k$ (or over
the integers) there is a "minimal" Cohen-Macaulay scheme $CM(X)$ and
a finite morphism $CM(X) \rightarrow X$ (and a finite morphism from
the normalization of $X$ to $CM(X)$). The construction generalizes
the known construction of normalisation of a scheme. For the ring $A$ we denote by $CM(A)$ its Cohen-Macaulaysation. 
\begin{theo}
\label{last}
Let $(A,W)$ be a Schur pair of rank $r$ such that $W$ is a finitely generated $A$-module. Then $(CM(A),W)$ is also a Schur pair of rank $r$.

In particular, if $(A,W)$ corresponds to a ring of partial differential operators (cf. \cite[prop. 3.2, th. 2.1]{Ku4}), then by theorem \ref{schurpair} and proposition \ref{purity} the pair $(CM(A),W)$ also corresponds to a ring of partial differential operators which is Cohen-Macaulay. The corresponding to the pair $(CM(A),W)$ projective surface $X$ is also Cohen-Macalay by \cite[th. 3.2]{Ku4}. 
\end{theo}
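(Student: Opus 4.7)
My plan is to verify the clauses of definition \ref{schurdata} for $(CM(A),W)$, routing the argument through the geometric side via theorem \ref{dannye}. Transport $(A,W)$ to a geometric datum $q=(X,C,P,\cf,\pi,\phi) \in \cq_r$ via $\chi^{-1}$; the hypothesis that $W$ is finitely generated over $A$ ensures that $\tilde W$ is finitely generated over $\tilde A$, so $\cf=\Proj \tilde W$ is a coherent torsion-free sheaf on $X$. Let $\beta : Y \to X$ be the Cohen-Macaulayification from \cite{Ku4}, a finite birational morphism with $Y$ Cohen-Macaulay. Since $P \in X$ is regular (hence Cohen-Macaulay) and Cohen-Macaulayification modifies $X$ only along its non-CM locus, which has codimension at least two, $\beta$ is an isomorphism over an open neighborhood of $P$; in particular there is a unique preimage $P'\in \beta^{-1}(P)$ with $\widehat{\co}_{Y,P'}=\widehat{\co}_{X,P}$.

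Next, I would lift the remaining data. Set $C' := \beta^{-1}(C)$, still $\dq$-Cartier and ample since finite pullback of an ample $\dq$-Cartier divisor is again such, and let $\cf'$ denote the torsion-free part of $\beta^*\cf$. Keep $\pi,\phi$ unchanged via the identification $\widehat{\co}_{Y,P'}=\widehat{\co}_{X,P}$. The next task is to check $(Y,C',P',\cf',\pi,\phi)\in \cq_r$ clause by clause from definition \ref{geomdata}: conditions \ref{dat1}--\ref{dat5} are immediate, and condition \ref{dat6} follows because $\beta_*\cf'$ agrees with $\cf$ away from a codimension-two closed set, combined with the reflexivity ($S_2$-property) of coherent torsion-free sheaves on the Cohen-Macaulay surface $Y$, yielding canonical isomorphisms $H^0(Y,\cf'(nC'))\simeq H^0(X,\cf(nC))$ for all $n\ge 0$.

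Finally, apply $\chi$ to the new datum. By definition \ref{mapxi}, the first component of $\chi(Y,C',P',\cf',\pi,\phi)$ is $H^0(Y\setminus C',\co_Y)$, which equals $CM(A)$ by the defining property of Cohen-Macaulayification. The second component is $H^0(Y\setminus C',\cf')=H^0(X\setminus C,\cf)=W$ by the same codimension-two reflexivity argument. By theorem \ref{dannye} this output is automatically a Schur pair of rank $r$, and this simultaneously establishes all of the remaining Schur-pair data: $CM(A)\cdot W \subset W$, $CM(A)$ is a finitely generated $k$-algebra (being finite over $A$), $\trdeg(\Quot CM(A))=2$, and $CM(A)$ is strongly admissible of rank $r$ with $N_{CM(A)}=\tilde N_{CM(A)}=r$ (the preservation of the numerical invariants uses $CM(A) \subset \bar A \subset k[[u]]((t))$, together with the equality of completions at $P'$ and $P$).

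The main obstacle is the descent step in the second paragraph: verifying that $(Y,C',P',\cf',\pi,\phi)$ genuinely is a geometric datum of the same rank $r$, in particular that the cohomological condition \ref{dat6} survives the finite birational modification $\beta$. This reduces to a reflexivity argument on the Cohen-Macaulay surface $Y$, exploiting that $\beta$ is an isomorphism off a codimension-two subset, and relies crucially on the explicit construction of the Cohen-Macaulayification given in \cite{Ku4} together with the coherence of $\cf$ (which is precisely where the hypothesis that $W$ is a finitely generated $A$-module enters).
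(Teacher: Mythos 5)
Your route through the geometric side breaks down at exactly the point where the real content of the theorem lies. The crux is to show that $CM(A)\cdot W\subset W$, i.e.\ that the spaces of sections do not grow when one passes from $X$ to $CM(X)$. You propose to get this from ``the reflexivity ($S_2$-property) of coherent torsion-free sheaves on the Cohen--Macaulay surface $Y$,'' but torsion-free coherent sheaves on a surface are \emph{not} in general $S_2$ (the ideal sheaf of a closed point on a smooth surface is torsion free of depth $1$ at that point), so sections of $\cf$ over the complement of the finite non-CM locus need not extend to sections of $\cf$. Concretely: with $\beta:Y\to X$ finite and an isomorphism off a finite set $Z$, one has $\cf\subseteq\beta_*\cf'$ with quotient supported on $Z$, hence $W=H^0(X\setminus C,\cf)\subseteq H^0(Y\setminus C',\cf')$ with a possibly nonzero finite-dimensional quotient --- and ruling out that quotient is precisely the assertion to be proved. (Note that for the structure sheaf the analogous inclusion $A\subseteq CM(A)$ \emph{is} typically strict; nothing in your argument distinguishes $\cf$ from $\co_X$ here.) So your construction only yields that $\chi$ of the new datum is $(CM(A),W')$ for some $W'\supseteq W$ of finite codimension, not $(CM(A),W)$.

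The paper closes this gap by a completely different, operator-theoretic mechanism. It sets $W'=CM(A)\cdot W$ and uses the finite generation of $W$ over $A$ together with $\gr(CM(A))=\gr(A)$ (from \cite[th.~3.2]{Ku4}) to see that $W'$ is spanned over $W$ by finitely many extra elements; after conjugating by the Sato operator $S$ this gives $W_0'=W_0+\langle\mbox{finitely many vectors}\rangle$, preserved by the ring $B'=S\psi_1^{-1}(CM(A))S^{-1}$. Then a sharpened version of the purity argument of proposition \ref{prop1} is run: if $b\in B'$ had $b_-\neq 0$, applying $z^{-\ord_{M_1,M_2}(b_-)-(n,0)}$ for $q+2$ large values of $n$ and using that only finitely many directions lie outside $W_0$ produces a linear system whose matrix of binomial coefficients $C^l_{m+i}$ is invertible, forcing the leading data of $b_-$ to vanish --- a contradiction. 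Hence $B'\subset D$, so by proposition \ref{prop2} it preserves $W_0$ itself, i.e.\ $CM(A)\cdot W\subset W$. This Vandermonde-type step has no counterpart in your sketch, and without it (or a correct substitute for the false reflexivity claim) the proposal does not establish the theorem. The remaining Schur-pair axioms for $CM(A)$ are indeed routine, as you say, since $CM(A)$ is a finite $A$-module inside $k[[u]]((t))$.
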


\begin{proof} 
Let $X$ be the projective surface corresponding to the pair $(A,W)$ by theorem \ref{dannye}. Then by \cite[th. 3.2]{Ku4} there is a natural isomorphism of a neighbourhood of the divisor $C$ on $X$ and on $CM(X)$ implying $\co_{CM(X),P}\simeq \co_{X,P}$. Thus, we can extend the embedding from definition \ref{mapxi}: $CM(A)\simeq H^0(CM(X)\backslash C, \co_{CM(X)})\hookrightarrow k[[u]]((t))$ (note that the image of this embedding contains $A$). Let's denote the image of this embedding also by $CM(A)$. By the same arguments as in the proof of lemma \ref{claim1} we have $H^0(CM(X), \co_{CM(X)}(nC'))\simeq CM(A)_{nd}$.

Consider the subspace $W'$ in $k[[u]]((t))$ generated by $W$ over  $CM(A)$. Since $W$ is a finitely generated $A$-module, the space $W'$ is generated by finite number elements $w_1,\ldots w_n$ over $CM(A)$ (these elements also generate $W$ over $A$). Because of theorem 3.2 in \cite{Ku4} the graded rings $\gr (CM(A))$ and $\gr (A)$ are equivalent, thus $W'$ is generated as a $k$-subspace by the space $W$ and by finite number of elements $w_ia_j$, where $i=1,\ldots n$, $a_j$ are a basis of finitely dimensional subspace $CM(A)_{kd}$ for some fixed $k$. 

Let $S$ be the operator (see theorem \ref{theo2}) such that $W_0S=\psi_1^{-1}(W)$ (see corollary \ref{trick}). Then we have $B=S\psi_1^{-1}(A)S^{-1}\subset D$ by our assumption, whence $S\in E$ (see the proof of theorem \ref{schurpair} and lemma \ref{lemma8}). Denote by $W_0'$ the space $\psi_1^{-1}(W')S^{-1}$. By the arguments above $W_0'$ is generated by $W_0$ and by finite number of elements $w_ia_jS^{-1}$ as a $k$-space. 
Note that $W_0'B\subset W_0'$ and $W_0'B'\subset W_0'$, where $B'=S\psi_1^{-1}(CM(A))S^{-1}$. 

Now we can argue as in the proof of proposition \ref{prop1} to show that $B'\subset {D}$. Since $S\in E$, we have $B'\in E$. Let $b\in B'$, $b\notin D$. Then $b_-=b-b_+\neq 0$. In this case we have 
$$
0\neq z^{-\ord_{M_1,M_2}(b_-)}b_-= \partial^{\ord_{M_1,M_2}(b_-)}(b_-)(0)\notin W_0
$$
and $z^{-\ord_{M_1,M_2}(b_-)}b_+\in W_0$. Since $W_0'$ is generated by $W_0$ and by finite number of elements not belonging to $W_0$ and since $b\in E$, for some $n\gg 0$ we have  $z^{-\ord_{M_1,M_2}(b_-)-(n,0)}b_-\notin W_0'$. Indeed, let $b_{ij}$ be a coefficient of the series $b_-$ such that $\partial^{\ord_{M_1,M_2}(b_-)}(b_{ij})(0)\neq 0$. Let $b_{i+1,j},\ldots b_{i+q,j}\neq 0$ be non zero coefficients of the series $b_-$ with fixed $j$, i.e. $b_{i+l,j}=0$ for all $l>q$. Then 
for each $n\gg 0$ the condition $z^{-\ord_{M_1,M_2}(b_-)-(n,0)}b_-\in W_0'$ imply the equation 
\begin{multline}
\partial^{\ord_{M_1,M_2}(b_-)}(b_{i,j})(0)+n\partial^{\ord_{M_1,M_2}(b_-)+(1,0)}(b_{i+1,j})(0)+C_n^2\partial^{\ord_{M_1,M_2}(b_-)+(2,0)}(b_{i+2,j})(0)+\ldots\\ +C_n^q\partial^{\ord_{M_1,M_2}(b_-)+(q,0)}(b_{i+q,j})(0)=0.
\end{multline}
Thus for $n=m,\ldots ,m+q+1$ (for $m\gg 0$) a system of linear equations $Cx=0$, $x=(x_0,\ldots ,x_q)$ must hold, where the variables $x_l=\partial^{\ord_{M_1,M_2}(b_-)+(l,0)}(b_{i+l,j})(0)$, $l=0,\ldots q$, and the matrix
$$
C=\left ( \begin{array}{cccc}
1& C_m^1& \ldots & C_m^q\\
1& C_{m+1}^1& \ldots & C_{m+1}^q\\
\vdots & \vdots & \ddots & \vdots\\
1& C_{m+q}^1 &\ldots &C_{m+q}^q\\
\end{array}
\right )
$$
Since $C$ is invertible, we have $x=0$, a contradiction  with $\partial^{\ord_{M_1,M_2}(b_-)}(b_{ij})(0)\neq 0$. So, if $b$ preserves $W_0'$, then $b$ must be in $D$. Therefore, $B'\subset D$ and $B'$ preserves $W_0$. Then $CM(A)$ preserves $W$, hence $(CM(A),W)$ is a Schur pair of rank $r$ (all properties from definition \ref{schurdata}, item \ref{sdat2} for the ring $CM(A)$ hold because $CM(A)\supset A$ is a finite $A$-module). 
\end{proof}

\noindent A. Zheglov,  Lomonosov Moscow State  University, faculty
of mechanics and mathematics, department of differential geometry
and applications, Leninskie gory, GSP, Moscow, \nopagebreak 119899,
Russia
\\ \noindent e-mail
 $azheglov@math.msu.su$

\end{document}